\newtheorem{theorem}{Theorem}
\newtheorem{lemma}{Lemma}
\newtheorem{defi}{Definition}
\newtheorem{rmk}{Remark}
\newtheorem{cor}{Corollary}
\date{February 2015}
\title{Loop Grassmannian of quivers and Compactified Coulomb branch of quiver gauge theory with no framing}
\author{Zhijie Dong }
\date{}
\begin{document}

\maketitle
\begin{abstract}
Mirković introduced the notion of loop Grassmannian for symmetric integer matrix $\kappa$. It is a two-step limit of the local projective space $Z_{\kappa}^{\alpha}$, which generalizes the usual Zastava for a simply laced group $G$. The usual loop Grassmannian of $G$ is recovered when the matrix $\kappa$ is the Cartan matrix of $G$.
 On the other hand, Braverman, Finkelberg, and Nakajima showed that the Compactified Coulomb branch $\mathbf{M}_{Q}^{\alpha}$ for the quiver gauge theory with no framing also generalizes the usual Zastava.
We show that in the case when $\kappa$ is the associated matrix of the quiver $Q$, these two generalizations of Zastava coincide, i.e $\mathbf{M}_{Q}^{\alpha}\cong Z_{\kappa(Q)}^{\alpha}$. 
%We hope this will bring two applications. 

%Hence we get a canonical growth structure on $\mathbf{M}^{\alpha}$, i.e. for $\beta<\alpha$, we can embed $\mathbf{M}^{\beta}$ into $\mathbf{M}^{\alpha}$ canonically. 

%We hope similar constructions also hold for Coulomb branch $\mathbf{M}^{\alpha}$ for the quiver gauge theory with framing, which hopefully can be used to construct the highest weight module structure of its  
%\cite{braverman2018towards}
\end{abstract}
\section{Introduction}

\subsection{Generalization of loop Grassmannian}
%\subsection{Loop Grassmannian of $G$}
Let $G$ be a reductive group over a field $k=\overline{k}$ with char($k$)$=0$.
Let $G_{\mathcal{K}}$ and $G_{\mathcal{O}}$ be its formal loop group and formal arc group, respectively. Define $\mathcal{G}(G)=G_{\mathcal{K}}/G_{\mathcal{O}}$ as the loop Grassmannian of $G$, which is an ind-scheme over $k$. 

We consider extensions of the concept of the loop Grassmannian to an arbitrary Kac-Moody group $G_{KM}$. It is not clear how to define the quotient $(G_{KM})_{\mathcal{K}}/(G_{KM})_{\mathcal{O}}$ as an ind-scheme. 
The standard approach is to construct a system of finite dimensional schemes. In \cite{braverman2010pursuing,braverman2019coulomb}, normal slices to certain orbits in the (undefined) loop Grassmannian were considered. In two approaches considered here (\cite{braverman2019coulomb,mirkovic}) these schemes are projective and one constructs the whole loop Grassmannian $\mathcal{G}(G_{KM})$ as a certain colimit \cite{mirkovic}. 

\subsection{Zastava spaces  \cite{finkelberg1997semiinfinite}}

We recall Zastava spaces $Z_G$ for a semisimple simply-connected group $G$. In 1.3 and 1.4 we will consider two constructions of generalization of Zastava for a quiver.
\subsubsection{Intersections of semiinfinite orbits in $\mathcal{G}(G)$}
First, let's get a feeling how the procedure in 1.1 can be done when $G$ is a simply-connected group.

We start by fixing a Cartan subgroup $T$ and a pair of opposite Borel subgroups $(B^{+}, B^{-})$ such that $B^{+}\cap B^{-}=T$. Let $N^{\pm}$ be the unipotent subgroups such that $B^{\pm}=TN^{\pm}.$
For any cocharacter $\alpha$ of $T$, let $t^{\alpha} \in G_{\mathcal{K}}$ be the point corresponding to $\alpha$ and $\overline{t^{\alpha}}$  be the corresponding point in $ \mathcal{G}(G)$. For two cocharacter $\lambda,\mu$ of $T$,
let $S^{+}_{\lambda}$ and $S^{-}_{\mu}$ be the $N_{\mathcal{K}}^{+}$-orbit of the point $\overline{t^{\lambda}}$ and the $N_{\mathcal{K}}^{-}$-orbit of the point $\overline{t^{\mu}}$, respectively. Furthermore, let $\overline{S^{+}_{\lambda}}$ and $\overline{S^{-}_{\mu}}$ be their closures in $\mathcal{G}(G).$

For each cocharacter $\alpha$, we define $F^{\alpha}=\overline{S^{-}_{-\alpha}}\cap \overline{S^{+}_{0}}$. 
 Now for cocharacters $\alpha,\beta$ such that $\alpha\leq \beta$, we have a closed embedding $F^{\alpha}\xhookrightarrow[]{} F^{\beta}$, which we refer to as the growth structure.

Taking the inductive limit of $F^{\alpha}$ for $\alpha\geq 0$, we recover $\overline{S_0}\stackrel{def}{=}\overline{S^{+}_0}$. Let $\overline{S_0}\xrightarrow[]{\times t^{\alpha}}\overline{S_0}$ be the multiplication map that maps $\overline{S_{-\alpha}}\subset \overline{S_0} $ to $\overline{S_0}.$ 
Subsequently, taking the direct limit of $\overline{S_0}\xrightarrow[]{\times t^{\alpha}}\overline{S_0}$ for $\alpha \geq 0$ (the direct system will be referred to as the shift structure), we obtain the entire $\mathcal{G}(G)$ when $G$ is simply connected.

This is a case of the prescription from 1.1 with
 finite-dimensional schemes $F^{\alpha}$ and the ind-system given by the growth structure between $F^{\alpha}$ and the shift structure between $\overline{S_0}.$

%To define $\mathcal{G}(G_{KM})$,
%we need to find a way to define finite-dimensional schemes and the two-step colimit using only the group $G$ or certain equivalent data that determine $G$.

\subsubsection{Ordered Zastava $Z'_G$ from global loop Grassmannian}
Consider the global version of the loop Grassmannian $\mathcal{G}(G).$
This is the Beilinson-Drinfeld Grassmannian $\mathcal{G}^{C}(G)$ defined for a reductive group $G$ and a smooth curve $C$. It has a natural projection $\mathcal{G}^{C}(G)\xrightarrow[]{\pi} \sqcup_{n\in \mathbb{N}}C^n$.
This space has the so-called factorization property.
E.g.,
$\mathcal{G}^{C}(G)_{a}\cong \mathcal{G}(G)$ and $\mathcal{G}^{C}(G)_{(a,b)}\cong \mathcal{G}^{C}(G)_{a}\times \mathcal{G}^{C}(G)_{b}$ for $a\neq b.$
For cocharacters $\lambda_1,\cdots, \lambda_n$,
we can also define the global version of $N^{\pm}_{\mathcal{K}}$-orbit\footnote{ First, one defines the section of map $\mathcal{G}^{C}(G)\xrightarrow[]{\pi} C^n$ corresponding to $\lambda_1,\cdots, \lambda_n$\cite[1.1.8]{zhu2009affine}. Then one defines the global loop group of $N$\cite[2.2.3] {achar_riche_central_sheaves}.} $S_{\lambda_1,\lambda_2,\cdots,\lambda_n}^{BD,\pm}\subset \mathcal{G}^{C}(G)$, their orbit closures $\overline{S_{\lambda_1,\lambda_2,\cdots,\lambda_n}^{BD,\pm}}$
and intersections $F_{BD}^{\alpha_1\cdots,\alpha_n}=\overline{S_{-\alpha_1,\cdots,-\alpha_n}^{BD,-}}\cap \overline{S_{0,\cdots,0}^{BD,+}}$ for cocharacters $\alpha_1,\cdots, \alpha_n$. 
E.g.,
the space $S_{\lambda_1,\lambda_2}^{BD}$ is over $C^2$ and its fiber over $(a,b)$ is $S_{\lambda_1}\times S_{\lambda_2}$ for $a\neq b$ and its fiber over $(a,a)$ is $S_{\lambda_1+\lambda_2}$\footnote{The space $\overline{S_{\lambda_1,\lambda_2}^{BD,+}}$ is over $C^2$ and its fiber over $(a,b)$ is $\overline{S_{\lambda_1}}\times \overline{S_{\lambda_2}}$ for $a\neq b$. But here, we only know that the reduced scheme of its fiber over $(a,a)$ is $\overline{S_{\lambda_1+\lambda_2}}$ .}.
For $\alpha=\alpha_{k_1}+\cdots \alpha_{k_n}$, where $\alpha_{k_i}$ is simple coroot,  let $n_i$ be the number of $j's$ such that $\alpha_{k_j}=\alpha_i$. Let $C^{\alpha}=\prod_{i\in I}C^{n_i}$.
Define the ordered Zastava $Z_{G}'^{\alpha}=F_{BD}^{\alpha_{k_1},\cdots,\alpha_{k_n}}$ over $C^{\alpha}$. Here, the space $F_{BD}^{\alpha_{k_1},\cdots,\alpha_{k_n}}$ is canonically isomorphic to $F_{BD}^{\alpha'_{k_1},\cdots,\alpha'_{k_n}}$ for a different order of decomposition $\alpha=\alpha'_{k_1}+\cdots+\alpha'_{k_n}$ (they are different as subspaces of $\mathcal{G}^{C}(G)$, through). Hence the ordered Zastava $Z_{G}'^{\alpha}$ is independent of the order, which justifies the notation.
%It turns out that $Z_{G}'^{\alpha}$ and its limit constructions can be defined without using the Beilinson-Drinfeld Grassmannian $\mathcal{G}^{C}(G)$ (\ref{Zas}). By this construction, we recovered $\mathcal{G}(G)$ when $G$ is simply-connected \footnote{with a little more effect, we can do this for any reductive group. Moreover, we can also reconstruct the global $\mathcal{G}(G).$}.
%As said at the beginning, in the Kac-Moody setting, we can take this construction as the definition of $\mathcal{G}(G_{KM}).$
\subsubsection{Zastava $Z_G$ from quasimaps}\label{Zas}
It turns out that $Z_{G}'^{\alpha}$ and its limit constructions can be defined without using the Beilinson-Drinfeld Grassmannian $\mathcal{G}^{C}(G)$.

%In fact, 
%the space $Z_{G}'^{\alpha}$ has another incarnation\footnote{partially 
% symmetrized version}, called Zastava $Z_{G}^{\alpha}$. 
First, the affine Zastava\footnote{We will fix a curve $C$ so we drop $C$ from this notation. The affine Zastava was called Zastava in \cite{finkelberg1997semiinfinite} and Zastava was called compactified Zastava in \cite{braverman2019coulomb}.} $Z^{\text{aff},\alpha}_G$ is the space of based quasimaps from the curve $C$ to
the flag variety of $G$ of degree $\alpha\in \mathbb{N}[I]$. It has a natural map $\pi$ to $\mathcal{H}^{\alpha}_{C\times I}$, the Hilbert scheme of point of $C\times I$ of length $\alpha$.
Then Zastava $Z^{\alpha}_G$ is defined as certain compactification (fiberwise with respect to $\pi$) of $Z^{\text{aff},\alpha}_{G}$ ( see remark 3.7 in \cite{braverman2019coulomb} and the reference therein).
Now $Z_{G}^{\alpha}$ is the partially symmetrized version\footnote{The map $Z_{G}'^{\alpha}\xrightarrow[]{\pi}\prod_{i\in I}C^{n_i}$ descends to $Z_{G}^{\alpha}=Z_{G}'^{\alpha}/\prod_{i\in I}S^{n_i}\xrightarrow[]{\pi}\prod_{i\in I}C^{n_i}/S^{n_i}\cong \mathcal{H}^{\alpha}_{C\times I}.$
Define the intersection $F_{\text{aff},BD}^{\alpha_1\cdots,\alpha_n}=S_{\alpha_1,\cdots,\alpha_n}^{BD,-}\cap \overline{S_{0,\cdots,0}^{BD,+}}$. We have $Z_{G}'^{\text{aff},\alpha}=F_{\text{aff},BD}^{\alpha_{k_1},\cdots,\alpha_{k_n}}$.  Then $Z_{G}^{\text{aff},\alpha}$ is the partially symmetrized version of $Z_{G}'^{\text{aff},\alpha}$.}
 of $Z_{G}'^{\alpha}.$
Construction of affine Zastava $Z_{G}^{\text{aff}}$ by quasimap was extended to $G$ being Kac-Moody group in \cite{braverman2006uhlenbeck}, and we expect that the construction of Zastava extends easily.

In this paper, we will consider two other approaches that recover $Z^{\alpha}_{G}$ when $G$ is simply-laced\footnote{It is an open question to identify these two approaches with the quasimap construction for any quiver besides ADE quivers. 
We expect that this identification can be reduced to the identification of three constructions of affine Zastava once the quasimap definition of Zastava has been made.
For identification of the Coulomb branch affine Zastava (see 1.4)
 and quasimap affine Zastava, it suffices to show that the quasimap affine Zastava $\text{Qmap}^{\text{aff},\alpha}_{Q}$ is normal or flat over $\mathcal{H}_{C\times I}$\cite{braverman2019coulomb}.
 This is known when $Q$ is ADE or affine type A \cite{braverman2014semi} where they constructed a resolution of $\text{Qmap}^{\text{aff},\alpha}_{Q}$ and prove it is normal and Cohen-Macaulay, hence flat over $\mathcal{H}_{C\times I}$.}. These two do not directly deal with $G_{KM}$, but with a quiver $Q$.
In the approach in 1.4 below,  the growth and shift structure can also be constructed. Hence, we accomplished the prescription in 1.1 and have a definition of $\mathcal{G}(G_{KM})$ when $G_{KM}$ is associated with a quiver $Q$.

\subsection{Compactified Coulomb branch for quiver gauge theory with no framing}

This is introduced in \cite{braverman2018towards,braverman2019coulomb}.  For any quiver $Q$ (possibly with loops) and dimension vector $\alpha\in \mathbb{Z}[I]$, 
they defined a convolution algebra on certain equivariant homology space of a certain ind-scheme $\mathcal{R}$, and showed it is commutative.
%Then they defined an affine variety $C^{\alpha}_Q$ as its Spec.
They consider certain positive part $\mathcal{R}^{+}$ of $\mathcal{R}$. 
Denote the Spec of the convolution subalgebra corresponding to $\mathcal{R}^{+}$ by $\textsf{M}^{\alpha}_Q.$
They also defined a filtration on this subalgebra and defined $\mathbf{M}^{\alpha}_Q$ as the Proj of its Rees algebra.
When the underlying diagram of the quiver $Q$ is the Dynkin diagram of $G$, they showed that $
\textsf{M}^{\alpha}_Q
$ ( or $\mathbf{M}^{\alpha}_Q $) coincides with $Z_{G}^{\text{aff},\alpha}$ ( or $Z_{G}^{\alpha}$ resp) for $C=\mathbb{A}^1$ (remark 3.7 in \cite{braverman2019coulomb}).
%\footnote{The Coulomb branch $C_{Q}^{\alpha}$ coincides with certain open part of $Z_{G}^{aff,\alpha}$, which is called open Zastava. We will not need this in this paper.}. 
 However, defining the growth structure in this approach requires some work \cite{muthiah2022fundamental}.
This approach, when accompanied by framing, extends the concept of generalized transversal slices to any quiver $Q$ \cite{braverman2019coulomb}.
\subsection{Local projective spaces \cite{mirkovic2021loop,mirkovic}}
\subsubsection{Motivations} 
We motivate this construction by explaining how to reconstruct $Z^{\alpha}_{G}$ directly from the Cartan matrix of $G$.
We abbreviate $\mathcal{H}_{C\times I}^{\alpha}$ by $\mathcal{H}^{\alpha}$. Now we assume $G$ is simple and simply connected.
 The factorizable part of the Picard group of $\mathcal{G}^{C}(G)$ is $\mathbb{Z}.$
 Let $\mathcal{O}(1)$ be the positive generator of it. Denote the descent of $\mathcal{O}(1)|_{Z'_{G}}$ (under the partial symmetrization map $Z'_{G}\xrightarrow[]{}Z_{G}$) to $Z_{G}$ also by $\mathcal{O}(1).$
 For a sheaf $\mathcal{F}$ on $X$ and a map $X\xrightarrow[]{} Y$, denote by $\mathcal{F}_{X/Y}$ the pushforward of $\mathcal{F}$ from $X$ to $Y$ (the notation is used when the map $X\xrightarrow[]{} Y$ is  clear from the context).

We have the Kodaira embedding $Z^{\alpha}_G\xrightarrow[]{i} \mathbb{P}((\mathcal{O}(1)_{Z^{\alpha}_G/\mathcal{H}^{\alpha}})^{*})$.
Now we further assume that $G$ is simply laced.
A crucial observation is that the restriction map $\mathcal{O}(1)_{Z^{\alpha}_G/\mathcal{H}^{\alpha}}\cong \mathcal{O}(1)_{(Z^{\alpha}_G)^{T}/\mathcal{H}^{\alpha}}$ is an isomorphism. Moreover, $(Z_{G}^{\alpha})^T$ has a moduli description that solely depends on $\alpha$. It is $Gr(\mathcal{T}^{\alpha})$, the Hilbert scheme of the tautological bundle $\mathcal{T}^{\alpha}$ over $\mathcal{H}^{\alpha}$.  The generic fiber of the map $Z^{\alpha}_{G}\xrightarrow[]{\pi} \mathcal{H^{\alpha}}$ is an $n$-th power of $\mathbb{P}^1$'s, where $n$ is the length of $\alpha$. These $\mathbb{P}^1$'s can be regarded as the fibers of $Z^{\alpha_{i_k}}\xrightarrow[]{\pi} \mathcal{H}^{\alpha_{i_k}}$, where $\alpha$ decomposes into the sum of simple roots $\alpha_{i_k}, 1\leq k\leq n, i_k\in I$.

Over the regular part of $C^{\alpha}$\footnote{This is defined as the pullback under $C^{\alpha}\xrightarrow[]{}\mathcal{H}_{C\times I}^{\alpha}$ of the regular part $\mathcal{H}_{reg}^{\alpha}$ of $\mathcal{H}_{C\times I}^{\alpha}$}, the line bundle $\mathcal{O}(1)$ on $Z'^{\alpha}$ is canonically isomorphic to the outer tensor product (over $\mathcal{H}^{\alpha}$) of $\mathcal{O}(1)$ on these $Z^{\alpha_{i_k}},1\leq k\leq n$, regardless of $G$. Denote the isomorphism by $loc$. 
Consider the ordered Zastava $Z'^{\alpha}_G=Z^{\alpha}_G \times_{\mathcal{H}^{\alpha}} C^{\alpha}$ (see footnote 4 on page 3). 
Denote the quotient map $C^{\alpha}\xrightarrow[]{q^{\alpha}}\mathcal{H}^{\alpha}$.
We have

\begin{tikzcd}
   Z_{G,reg}'^{\alpha}\arrow[d,hook] \arrow[r,"\cong"] &\mathbb{P}^{1}\times\cdots \times \mathbb{P}^{1} \arrow[d,hook]  
   \\ \mathbb{P}((\mathcal{O}(1)_{Z'^{\alpha}_{G,reg}/C_{reg}^{\alpha}})^{*}) \arrow[d,"\cong"]  
   & \mathbb{P}((\mathcal{O}(1)\boxtimes \cdots \boxtimes \mathcal{O}(1)_{\mathbb{P}^1\times \cdots \times \mathbb{P}^1/C_{reg}^{\alpha}})^{*})\arrow[d,"\cong"]\arrow[l,"loc","\cong"']
    \\\mathbb{P}((\mathcal{O}(1)_{(q^{\alpha})^{*}Gr(\mathcal{T}_{reg}^{\alpha})/C_{reg}^{\alpha}})^{*}) \arrow[d,hook]
    &\mathbb{P}((\mathcal{O}(1)\boxtimes \cdots \boxtimes \mathcal{O}(1)_{(pr_1)^{*}Gr(\mathcal{T}^{\alpha_{k_1}})\times \cdots \times (pr_n)^{*}Gr(\mathcal{T}^{\alpha_{k_n}})/C^{\alpha}_{reg}})^{*}) \arrow[l,"loc","\cong"']
    \\\mathbb{P}((\mathcal{O}(1)_{(q^{\alpha})^{*}Gr(\mathcal{T}^{\alpha})/C^{\alpha}})^{*}) 
\end{tikzcd}
Here $\mathbb{P}^1$ means $\mathbb{P}^1_{C_{reg}^{\alpha}}$ and $\mathbb{P}^1\times\cdots \times \mathbb{P}^1$ means the fiber product over $C_{reg}^{\alpha}$.
The space $(pr_{i})^{*}Gr(\mathcal{T}^{\alpha_{k_i}})$ is the pullback of $Gr(\mathcal{T}^{\alpha_{k_i}})$ under the projection $C_{reg}^{\alpha}\xrightarrow[]{pr_i} C^{\alpha_{k_i}}=\mathcal{H}^{\alpha_{k_i}}.$
In the left column, the first map is the Kodaira embedding. The second is induced by the isomorphism $\mathcal{O}(1)_{Z'^{\alpha}_G/C^{\alpha}}\cong \mathcal{O}(1)_{(Z'^{\alpha}_G)^{T}/C^{\alpha}}\cong \mathcal{O}(1)_{(q^{\alpha})^{*}Gr(\mathcal{T}^{\alpha})/C^{\alpha}}$.
The third is embedding from the regular part to the whole.
In the right column, the first map is the Kodaira embedding. The second map is induced by $\mathcal{O}(1)_{Z'^{\alpha}_G/C^{\alpha}}\cong \mathcal{O}(1)_{(Z'^{\alpha}_G)^{T}/C^{\alpha}}\cong \mathcal{O}(1)_{(q^{\alpha})^{*}Gr(\mathcal{T}^{\alpha})/C^{\alpha}}$ for $\alpha=\alpha_{k_i}$. 

The map in the second row is denoted by $loc$ which comes from a certain isomorphism (still denoted by) $loc$ between line bundles. Same for the map $loc$ on the next row.
The isomorphism in the first row is induced by the isomorphism $loc$ between line bundles that appear in the second row\footnote{This isomorphism does not descend to an isomorphism $Z_{G,reg}^{\alpha}\cong \mathbb{P}^{1}_{\mathcal{H}_{reg}^{\alpha}}\times_{\mathcal{H}_{reg}^{\alpha}}\cdots \times_{\mathcal{H}_{reg}^{\alpha}} \mathbb{P}^{1}_{\mathcal{H}_{reg}^{\alpha}}$.}

Under the isomorphism $loc$, we get certain sections $S_{can}$ of $\mathcal{O}(1)_{(q^{\alpha})^{*}Gr(\mathcal{T}_{reg}^{\alpha})/C_{reg}^{\alpha}}$ corresponding to the canonical sections of the outer tensor of $\mathcal{O}(1)$ on $\mathbb{P}^1.$

Now it suffices to know how $Z'^{\alpha}_{reg}$ embeds into $\mathbb{P}((\mathcal{O}(1)_{(q^{\alpha})^{*}Gr(\mathcal{T}^{\alpha})/C^{\alpha}})^{*})$, since then we can reconstruct $Z'^{\alpha}$ as the closure of $Z'^{\alpha}_{reg}$ in $\mathbb{P}((\mathcal{O}(1)_{(q^{\alpha})^{*}Gr(\mathcal{T}^{\alpha})/C^{\alpha}})^{*})$ by the fact that the map $Z'^{\alpha}\xrightarrow[]{\pi}C^{\alpha}$ is flat.
This will be determined by the singularities of these sections $S_{can}$ of $\mathcal{O}(1)_{(q^{\alpha})^{*}Gr(\mathcal{T}_{reg}^{\alpha})/C_{reg}^{\alpha}}$ along the diagonal of  $C^{\alpha}$.
The singularities can be classified by a symmetric integer matrix $\kappa$, which is the Cartan matrix of $G$. 
\subsubsection{Construction of Zastava $Z_{\kappa}$ for a matrix $\kappa$}
On the other hand, for any symmetric matrix $\kappa\in M_{I}(\mathbb{Z})$, there is a line bundle $L^{\kappa}$ (\ref{kappa}) on $\mathcal{H}_{C\times I}$ with certain locality property. Let $pr_1,pr_2$ be the maps $\mathcal{H}_{C\times I} \xleftarrow{pr_1} Gr(\mathcal{T}^{\alpha}) \xrightarrow[]{pr_2} \mathcal{H}_{C\times I}$ such that $pr_1(D' \subset D)=D'$ and $pr_2(D'\subset D)=D.$ This defines a vector bundle $\mathcal{I}^{\alpha}=(pr_2)_{*}pr_1^{*}(L^{\kappa})$.
Denote the dual of $\mathcal{I}^{\alpha}$ by $V^{\alpha}$. 
Its locality structure will determine
an embedding $\Pi^{n}_{i=1} \mathbb{P}_{\mathcal{H}_{reg}^{\alpha}}^1\xrightarrow[]{i} \mathbb{P}(V^{\alpha})$.
Then one defines the space $Z^{\alpha}_{\kappa}$ as the closure of the image of $i$ in $\mathbb{P}(V^{\alpha})$.

As we briefly discussed above,
when the matrix $\kappa$ is the Cartan matrix of $G$, Mirkovic showed that $
Z^{\alpha}_{\kappa}$
 coincides with $Z_{G}^{\alpha}$\footnote{ This is done by showing $(q^{\alpha})^{*}Z^{\alpha}_{\kappa} \cong (q^{\alpha})^{*}Z_{G}^{\alpha}=Z_{G}'^{\alpha}$. It seems natural to use the notation $Z_{\kappa}'^{\alpha}$ for $(q^{\alpha})^{*}Z^{\alpha}_{\kappa}$ as we do for $Z_{G}'^{\alpha}$ but we will not use that.}.
In principle, the "$\kappa$-approach" is more general than the Coulomb branch construction since we can consider any symmetric matrix that is not necessarily the (modified) incidence matrix of a quiver (possibly with loops). In the upcoming paper\cite{dong}, we will provide some examples where the projection $
Z^{\alpha}_{\kappa}\xrightarrow[]{\pi}\mathcal{H}^{\alpha}_{C\times I}$ is not flat when the matrix $\kappa$ is not the incidence matrix of a quiver. Therefore, the merit of zastava for such $\kappa$ is subject to skepticism.

The drawback of this approach is that $Z_{\kappa}^{\alpha}$ is defined as a closure and we lack a direct description. Additionally, it is unclear how to define the generalized transverse slices of $\mathcal{G}(G_{KM})$. However, the advantage is that the growth structure \cite{mirkovic} is much easier to define.

\subsection{Identification of two generalizations of Zastava}
Now Let $C=\mathbb{A}^1$.
The main result of this paper is that the local (projective) space construction coincides with the Coulomb branch construction when $\kappa$ is the incidence matrix $\kappa(Q)$ of a quiver $Q$.
We will clarify the following parallel structures for Coulomb branch $\mathbf{M}^{\alpha}_{Q}$ 
and local projective space $Z^{\alpha}_{\kappa(Q)}$. In the table below, we omit the subscripts $Q$ and $\kappa(Q)$. Also, we fix $\alpha=(n_i)_{i\in I}$. Let $G=\prod_{i\in I}GL(k^{n_i})$ and $T$ be the diagonal subgroup of $G$ \footnote{These $G, T$ depend on $\alpha$ and are different from the $G, T$ that appeared in the Zastava before. It is clear which $G,T$ we mean from the context.}. Here, we omit $\alpha$ for the notations involving $\mathcal{R}$.

\begin{center}
\begin{tabular}{ |c|c|c| } 
 \hline
        & $\mathbf{M}^{\alpha}$ & $Z^{\alpha}$ \\ 
         \hline
 projective embedding & $\mathbf{M}^{\alpha}\xhookrightarrow[]{j_\alpha} 
 \mathbb{P}_{\mathcal{H}^{\alpha}}(H_{*}^{{G}}(\mathcal{R}^{+}_{\leq 1})^{*})$ &$Z^{\alpha}\xhookrightarrow[]{i_\alpha}  \mathbb{P}_{\mathcal{H}^{\alpha}}(V^{\alpha})$ \\ 
  \hline
regular fibers over $\mathcal{H}_{reg}$ & product of $\mathbb{P}^1$'s &  product of $\mathbb{P}^1$'s \\ 
 \hline
global basis   & basis of $H_{*}^{G}(\mathcal{R}^{+}_{\leq 1})$ &  basis of $\Gamma(\mathcal{H}^{\alpha},\mathcal{I}^{\alpha})$ \\ 
   \hline
\end{tabular}
\end{center}
Here $\mathcal{R}^{+}_{\leq 1}$ is certain subspace of the positive part $\mathcal{R}^{+}$ of $\mathcal{R}.$
The embedding $j_{\alpha}$ follows from Lemma \ref{generate}, while the embedding $i_{\alpha}$ is based on the definition of the local space.
The space $\mathbf{M}^{\alpha}$ is over $Spec(H_{G}^{*}(pt))\cong \mathcal{H}^{\alpha}.$ Let $C^{\alpha}=\prod_{i\in I}C^{n_i}.$ 

Let us see how to identify $\mathbf{M}^{\alpha}$ and $Z^{\alpha}$ after pulling back under $C^{\alpha}\xrightarrow[]{q^{\alpha}} \mathcal{H}^{\alpha}$.

Under the pull back $q^{\alpha}$, the space
$H_{*}^{G}(\mathcal{R}^{+}_{\leq 1})$ becomes $H_{*}^{T}(\mathcal{R}^{+}_{\leq 1})$ and $\Gamma(\mathcal{H}^{\alpha},\mathcal{I}^{\alpha})$ becomes
$\Gamma(C^{\alpha},(q^{\alpha})^{*}\mathcal{I}^{\alpha}).$
We will identify $H_{*}^{T}(\mathcal{R}^{+}_{\leq 1})\xrightarrow[\cong]{rel}\Gamma(C^{\alpha},(q^{\alpha})^{*}\mathcal{I}^{\alpha})$ in section \ref{identify projective space},  as free rank 1 modules\footnote{In each component.} over $H^{*}_{T}(\mathcal{R}^{+}_{\leq 1})
\cong 
\mathcal{O}((q^{\alpha})^{*}Gr(\mathcal{T}^{\alpha})).$

In component, the isomorphism $H_{*}^{T}(\mathcal{R}_{\varpi_{\beta}})\xrightarrow[\cong]{rel}\Gamma(C^{\alpha},(q^{\alpha})^{*}\mathcal{I}^{\alpha,\beta})$ is given by the ring isomorphism $H_{*}^{T}(\mathcal{R}_{\varpi_{\beta}})\cong \mathcal{O}((q^{\alpha})^{*}Gr^{\beta}(\mathcal{T}^{\alpha}))$ and choosing basis $[\mathcal{R}_{\varpi_{\beta}}]$ of $H_{*}^{T}(\mathcal{R}_{\varpi_{\beta}})$ and basis $u^{\beta}\otimes 1$ of $\Gamma(C^{\alpha},(q^{\alpha})^{*}\mathcal{I}^{\alpha,\beta})$ (see \ref{identify projective space} for the notations).

By a simple lemma (Lemma \ref{lemma}), it suffices to prove that the dotted arrow below exists as an isomorphism, i.e., the isomorphism $rel$ restricts to an isomorphism of two embeddings over the regular part $\mathcal{H}_{reg}$(lemma \ref{local iso}).
\begin{center}
\begin{tikzcd}
   (q^{\alpha})^{*}Z_{reg}^{\alpha}\arrow[d,dashed]{rel_{reg}}\arrow[r,hook] & (q^{\alpha})^{*}Z^{\alpha}\arrow[r,hook,"i_{\alpha}"] & \mathbb{P}_{C^{\alpha}}((q^{\alpha})^{*}V^{\alpha})\arrow[d,"rel\simeq"]{rel}\\
    (q^{\alpha})^{*}\mathbf{M}_{reg}^{\alpha} \arrow[r,hook] & (q^{\alpha})^{*}\mathbf{M}^{\alpha} \arrow[r,hook,"j_\alpha"]  & \mathbb{P}_{C^{\alpha}}(H_{*}^{T}(\mathcal{R}^{+}_{\leq 1})^{*}) 
\end{tikzcd}
\end{center}
To produce the dotted arrow, we will show their homogeneous coordinate rings are isomorphic.
Any basis of
$H_{*}^{T}(\mathcal{R}^{+}_{\leq 1})$
(resp.
$\Gamma(C^{\alpha},(q^{\alpha})^{*}\mathcal{I}^{\alpha})$)
over  $H^{*}_{T}(\mathcal{R}^{+}_{\leq 1})$
(resp.
$\mathcal{O}((q^{\alpha})^{*}Gr(\mathcal{T}^{\alpha}))$
generate the homogeneous coordinate rings of $\mathbb{P}_{C^{\alpha}}(H_{*}^{T}(\mathcal{R}^{+}_{\leq 1})^{*})$ (resp. $\mathbb{P}_{C^{\alpha}}((q^{\alpha})^{*}\mathcal{I}^{\alpha})$) over $\mathcal{O}(C^{\alpha}).$
There are canonical choices of basis for the  localization over $C_{reg}$ of both $H_{*}^{T}(\mathcal{R}^{+}_{\leq 1})$ and 
$\Gamma(C^{\alpha},(q^{\alpha})^{*}\mathcal{I}^{\alpha})$, which we will call the local bases\footnote{There are no canonical choices of basis before localization. We call any of them a global basis. Here we use parallel terminologies for global and local basis but they have different natures}. 
The local basis of $H_{*}^{T}(\mathcal{R}^{+}_{\leq 1})$ arises naturally from the localization theory of equivariant homology, and the local basis of $\Gamma(C^{\alpha},(q^{\alpha})^{*}\mathcal{I}^{\alpha})$ is inherent in the definition of local projective spaces.
We then show that these are identified via $rel$.
Finally, we verify that they satisfy the same relations through direct computation, using our explicit understanding of multiplication formulas on both sides.

%\subsection{other approaches and further questions}
%based quasi map space

\subsection{Background and further directions}
The loop Grassmannians associated to quivers $Q$ contain a class of loop Grassmannians for affine groups, so it is a part of a current effort by mathematicians and physicists to lift features of Langlands program to dimension 2.
This paper unifies approaches to $\mathcal{G}_{Q}$ via Coulomb branches and local spaces. For any quiver $Q$ the latter 
 constructs Zastavas and Beilinson-Drinfeld Grassmannian \cite{mirkovic} (more generally, for symmetric integral matrices), and the former \cite{braverman2019coulomb} constructs Zastava and generalized slices.

This seems to make some features of Satake equivalence for general quiver $Q$ more accessible at the moment.
This includes conjectural geometric constructions of the positive part $U(\mathfrak{g}^{+}_{Q})$ of the enveloping algebra for a Kac-Moody algebra associated with $Q$ (using Zastava) \cite[5.4]{finkelberg2020drinfeld}, and of its irreducible highest weight module $V(\lambda)$ (using generalized slices) in \cite[Conjecture 3.25]{braverman2019coulomb}. Also, one hopes that the Geometric Casselman-Shalika theorem, i.e., the Whittaker version of Satake equivalence \cite[6.36.1]{raskin2021chiral} can be extended to loop Grassmannian $\mathcal{G}_{Q}$ associated to quivers.

\section{Mirković local projective space (Zastava)}
We recall the notion of local space and local projective space introduced by Mirkovic\cite{mirkovic2021loop,mirkovic}.
\subsection{Hilbert scheme $\mathcal{H}_{C\times I}$}
Let $I$ be a finite set. 
Let $C$ be a smooth algebraic curve over an algebraic closed field $k$. We consider the Hilbert scheme $\mathcal{H}_{C\times I}$, the moduli of finite subschemes of $C\times I$. 
Let $C_i=C\times i\subset C\times I$ and $\mathcal{H}^{n_i}_{C_i}$ be the Hilbert scheme of finite subschemes $D$ of length $n_i$ of $C_i$.
For $\alpha=(n_i)_{i\in I}\in \mathbb{N}[I],$ we define $\mathcal{H}^{\alpha}_{C\times I}=\prod_{i\in I} \mathcal{H}^{n_i}_{C_i}.$ We understand $\alpha$ as a dimension vector.

We have $\mathcal{H}_{C\times I}=\sqcup_{\alpha\in \mathbb{N}[I]} \mathcal{H}^{\alpha}_{C\times I}.$
We also call $D=(D_i)_{i\in I}\in \mathcal{H}_{C\times I}$ an $I$-colored divisor where $D_i\in H_{C_i}$ is an effective divisor of $C_i$ for $i\in I$.
Since $C$ is a smooth algebraic curve, the Hilbert scheme $\mathcal{H}^n_{C}$ can also be viewed as the $n$-th symmetric power $S^{n}C$ of $C$, which is also the categorical quotient $C^n//S_n.$

\subsection{The regular part of $\mathcal{H}_{C\times I}$}
For $i,j\in I$ we define the $(i,j)$-diagonal divisor 
$\Delta_{ij}\subset \mathcal{H}_{C\times I}$. When $i\neq j$ the condition for $D\in \Delta_{ij}$ is that the component divisors $D_i$ and $D_j$ meet.
Also, $\Delta_{ii}$ is a discriminant divisor, a subscheme $D\in  \mathcal{H}_{C\times I}$ lies in $\Delta_{ii}$ if $D_i$ is not discrete, i.e., some point has multiplicity $>1$.
We call the complement of the union $\cup\Delta_{ij}\subset \mathcal{H}_{C\times I}$ the regular part of $\mathcal{H}_{C\times I}$ and denote it by $\mathcal{H}_{reg}$.
We call a divisor $D\in \mathcal{H}_{reg}$ a regular divisor. We say two divisors $D$ and $D'$ are disjoint if they are disjoint after forgetting the colors.
\\

\subsection{Local line bundle $L$ on  $\mathcal{H}_{C\times I}$.}

\begin{defi}
 A locality structure of a vector bundle $\mathcal{V}$ over $\mathcal{H}_{C\times I}$ is a system of isomorphisms : For two disjoint divisors\footnote{Here, divisor means closed subschemes of $C\times I\times S$ that are flat over $S$ } $D,D'\in \mathcal{H}_{C\times I}$,
$$\mathcal{V}_{D}\otimes \mathcal{V}_{D'} \xrightarrow[]{i_{D,D'}}\mathcal{V}_{D\cup D'} $$ that satisfy the associative, commutative and unital properties.\\ 
Similarly, a local structure on a space $Y$ over $\mathcal{H}_{C\times I}$ is a system of isomorphisms
$$Y_{D}\times Y_{D'} \xrightarrow[]{i_{D,D'}} Y_{D\cup D'}$$ that satisfy the same properties.
\end{defi}

\begin{rmk}\label{locality1}
The above structure (map between functors) is represented by the following.
%In a more global language
For any $\alpha=\alpha_1+\alpha_2$
, denote the vector bundle $\mathcal{V}$ on the component $\mathcal{H}^{\alpha}$ by $\mathcal{V}^{\alpha}.$\footnote{Here in the notation $\mathcal{V}^{\alpha}$, the superscript $\alpha$ indexes the connected component, not power.}
We have the addition of divisors
$$\mathcal{H}^{\alpha_1}\times \mathcal{H}^{\alpha_2} \xrightarrow[]{a^{\alpha_1,\alpha_2}} \mathcal{H}^{\alpha}.$$
Let $pr_i, i=1,2$ and   
$\mathcal{H}^{\alpha_1}\times \mathcal{H}^{\alpha_2} \xrightarrow[]{pr_i} \mathcal{H}^{\alpha_i}$ be the $i$-th projection.
Denote $(\mathcal{H}^{\alpha_1}\times \mathcal{H}^{\alpha_2})_{disj}$ by the open part of $\mathcal{H}^{\alpha_1}\times \mathcal{H}^{\alpha_2}$ that consists $(D,D')$ such that $D\in \mathcal{H}^{\alpha_1},D'\in \mathcal{H}^{\alpha_2}$ are disjoint.
A locality structure on a vector bundle $\mathcal{V}$ over $\mathcal{H}_{C\times I}$ is a system of isomorphisms over $(\mathcal{H}^{\alpha_1}\times \mathcal{H}^{\alpha_2})_{disj}$
$$\mathcal{V}^{\alpha_1}\boxtimes \mathcal{V}^{\alpha_2}:=pr_1^{*}\mathcal{V}^{\alpha_1}\otimes pr_{2}^{*}\mathcal{V}^{\alpha_2}
\xrightarrow[]{i^{\alpha_1,\alpha_2}} (a^{\alpha_1,\alpha_2})^{*}\mathcal{V}^{\alpha},$$
 for any $\alpha_1,\alpha_2$
that satisfy the associative, commutative and unital properties.
Later, we will have vector bundles with the notation $L,\mathcal{I}$ and $V$, where $L$ means it is of rank 1, i.e. a line bundle. 
\end{rmk}

\subsection{Induced vector bundle $\mathcal{I}$}
 To a local line bundle $L$ over $\mathcal{H}_{C\times I}$,
 we associate an induced vector bundle $\mathcal{I}$ over $\mathcal{H}_{C\times I}$.
 
Let $\mathcal{T}\subset \mathcal{H}_{C\times I}\times (C\times I)$ be the tautological scheme over $\mathcal{H}_{C\times I}$, where the fiber $\mathcal{T}_{D}$ at $D\in \mathcal{H}_{C\times I}$ is $D\subset C\times I.$
 Let $Gr(\mathcal{T})$ be
 the relative Hilbert scheme $Gr(\mathcal{T}/\mathcal{H}_{C\times I})$
 over $\mathcal{H}_{C\times I}$ such that the fiber at $D\in \mathcal{H}_{C\times I}$ is $Gr(D):=\mathcal{H}_{D}$, where $\mathcal{H}_D$ is the Hilbert scheme of all subschemes $D'\subset D$.
 
Let $\mathcal{T}^{\alpha}$ be the component of $\mathcal{T}$ over $\mathcal{H}^{\alpha}_{C\times I}$  and $Gr^{\beta}(\mathcal{T}^{\alpha})$ be the component of $Gr(\mathcal{T}/\mathcal{H}_{C\times I})$ whose fiber at $D$ is $Gr^{\beta}(D):=\mathcal{H}^{\beta}(D),$ the Hilbert scheme of subschemes $D'\subset D$ of length $\beta$.

The scheme $Gr(\mathcal{T})$ is a self correspondence of $\mathcal{H}_{C\times I}$,
 $$\mathcal{H}_{C\times I} \xleftarrow{pr_1} Gr(\mathcal{T}) \xrightarrow[]{pr_2} \mathcal{H}_{C\times I},$$
where $pr_1(D' \subset D)=D'$ and $pr_2(D'\subset D)=D.$
\begin{lemma}
 The map $pr_2$ is finite flat so $\mathcal{I}$ is a vector bundle.  
\end{lemma}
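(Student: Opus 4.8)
The plan is to reduce to the case of a single smooth curve and then to produce an explicit model of $Gr(\mathcal{T})$ as a product of symmetric powers, under which $pr_2$ becomes an addition map that is visibly finite flat. Since $\mathcal{H}^{\alpha}_{C\times I}=\prod_{i\in I}\mathcal{H}^{n_i}_{C_i}$ and a subscheme $D'\subset D$ of $D=(D_i)_{i\in I}$ is the same as a tuple $(D'_i\subset D_i)_{i\in I}$, the correspondence $Gr(\mathcal{T})$ over $\mathcal{H}^{\alpha}$ splits as the external product $\prod_{i\in I}Gr(\mathcal{T}_i)$ and $pr_2$ as the product of the corresponding projections. A product over $k$ of finite flat morphisms is finite flat, and $Gr(\mathcal{T}_i)=\bigsqcup_{\beta_i}Gr^{\beta_i}(\mathcal{T}_i)$ is a disjoint union over the degree, so it suffices to treat a single smooth curve $C$ and to show that $pr_2\colon Gr^{\beta}(\mathcal{T}^{n})\to\mathcal{H}^{n}_{C}=S^{n}C$ is finite flat for each $0\le\beta\le n$. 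Set-theoretically the fibre over $D=\sum_p m_p\,p$ consists of the subdivisors $\sum_p m'_p\,p$ with $0\le m'_p\le m_p$ and $\sum_p m'_p=\beta$, a finite set, so $pr_2$ is at least quasi-finite; note however that the fibres are genuinely non-reduced along the diagonals (already for $n=2$, $\beta=1$ the fibre over $2p$ is a length-two scheme), so flatness cannot be read off from a count of reduced points and the scheme structure must be handled with care.

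The key step is the scheme-theoretic identification $Gr^{\beta}(\mathcal{T}^{n})\cong S^{\beta}C\times S^{n-\beta}C$ over $S^{n}C$, under which $pr_2$ becomes the addition map $a\colon S^{\beta}C\times S^{n-\beta}C\to S^{n}C$, $(D',D'')\mapsto D'+D''$. I would establish this on the functor of points. In one direction, a family $(D',D'')$ of relative effective Cartier divisors on $C\times S$ of degrees $\beta$ and $n-\beta$ yields the inclusion $D'\subset D'+D''$, hence an $S$-point of $Gr^{\beta}(\mathcal{T}^{n})$. In the other direction, given a flat length-$\beta$ subscheme $\mathcal{D}'$ of the pulled-back universal divisor $\mathcal{D}$, one shows that $\mathcal{D}'$ is itself a relative effective Cartier divisor and that it admits a complement $\mathcal{D}'':=\mathcal{D}-\mathcal{D}'$, again a relative effective Cartier divisor of degree $n-\beta$, with $\mathcal{D}'+\mathcal{D}''=\mathcal{D}$; sending $(\mathcal{D}'\subset\mathcal{D})$ to $(\mathcal{D}',\mathcal{D}'')$ gives the inverse natural transformation.

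The hard part will be precisely this complement construction, i.e. showing that the monoid of relative effective Cartier divisors on the smooth relative curve $C\times S/S$ is well behaved enough that a flat length-$\beta$ subscheme of a degree-$n$ relative divisor is subtractible with an effective remainder. This is where smoothness of $C$ is indispensable: on a smooth curve the ideal of any finite subscheme is invertible, so a flat family of finite subschemes is a relative effective Cartier divisor, and fibrewise $\mathcal{D}'\le\mathcal{D}$ forces $\mathcal{D}-\mathcal{D}'$ to be effective; I would cite the standard theory of relative effective Cartier divisors on smooth relative curves for these facts.

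Granting the model, the conclusion is immediate. Symmetric powers of a smooth curve are smooth (locally $S^{m}\mathbb{A}^{1}=\mathbb{A}^{m}$), so $S^{\beta}C\times S^{n-\beta}C$ and $S^{n}C$ are both smooth of the same dimension $n$, and the addition map $a$ is finite (it is affine, and the coefficients of the two factors are integral over those of their product, so it is module-finite). By miracle flatness---a finite morphism from a Cohen--Macaulay scheme to a regular scheme with zero-dimensional fibres is flat---$a$, hence $pr_2$, is finite flat (of degree $\binom{n}{\beta}$). Finally, since $pr_2$ is finite flat and $pr_1^{*}(L^{\kappa})$ is locally free, its pushforward $\mathcal{I}^{\alpha}=(pr_2)_{*}pr_1^{*}(L^{\kappa})$ is locally free, i.e. a vector bundle, as claimed.
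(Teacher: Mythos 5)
Your proof is correct, but it takes a genuinely different route from the paper's. The paper argues via towers of partial flag spaces $Gr_p(\mathcal{T})$ of filtrations $D_1\subset\cdots\subset D_k\subset D$: the one-step forgetful maps are finite flat, hence so is the composite from the full flag space down to $\mathcal{H}^n$, and flatness of $Gr^{\beta}(\mathcal{T}^n)\to\mathcal{H}^n$ is then extracted by descent along the (faithfully flat) projection from the full flag space. You instead build the explicit model $Gr^{\beta}(\mathcal{T}^{n})\cong S^{\beta}C\times S^{n-\beta}C$ under which $pr_2$ becomes the addition map, and conclude by miracle flatness. Notably, your key identification $(pr_1,m)\colon Gr^{\beta}(\mathcal{T}^{n})\xrightarrow{\sim}\mathcal{H}^{\beta}\times\mathcal{H}^{n-\beta}$ is exactly part (b) of Theorem \ref{basis for local space}, which the paper states later (with proof ``Clear'') and uses to present $\mathcal{O}(Gr^{\beta}(\mathcal{T}^{n}))$; so you are in effect front-loading that result and deriving flatness from it, whereas the paper proves flatness first by an independent mechanism and records the product decomposition afterwards. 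What your route buys is the explicit degree $\binom{n}{\beta}$ and a self-contained flatness argument that does not lean on the paper's somewhat loosely stated descent step (which needs the forgetful maps to be faithfully flat, not merely flat, for the ``only if'' direction); what it costs is the divisor-subtraction step on the smooth relative curve --- checking that a finite flat length-$\beta$ subfamily of a relative effective Cartier divisor is itself a relative Cartier divisor admitting an effective complement --- which you correctly isolate as the crux where smoothness of $C$ enters and which is standard Katz--Mazur-style material, appropriately cited rather than reproved. Both arguments are sound, and the final step (pushforward of a locally free sheaf along a finite flat map is locally free) is the same in both.
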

\begin{proof}
For brevity, assume $I=\{1\}$. 
For $p=(p_1<\cdots<p_k)$, let $Gr_p(\mathcal{T})$ be the partial flag space of all filtrations $D_1\subset \cdots \subset D_k\subset D$ with $|D_i|=p_i.$ The maps $Gr_q(\mathcal{T})\xrightarrow[]{} Gr_p(\mathcal{T})$ are clearly finite flat when $q$ is obtained from $p$ by adding $p_{s+1}$ after some $p_s$. 
Build a tower of maps from $p=\{0\}$ to $p=(1,\cdots,n)$. The claim follows by the property that if $X\xrightarrow[]{f} Y$ is flat, $Y\xrightarrow[]{g} Z$ is flat if and only if $x\xrightarrow[]{g\circ f} Z$ is flat.
\end{proof}
Now define the induced vector bundle $\mathcal{I}=(pr_2)_{*}pr_1^{*}(L)$ and its dual $V=\mathcal{I}^{*}$.
\begin{lemma}\label{pre local of I}
    A local structure of $L$ canonically induces a local structure on $V$
\end{lemma}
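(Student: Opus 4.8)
The plan is to build the local structure on $V=\mathcal{I}^{*}$ by dualizing one on $\mathcal{I}$, and to obtain the latter from a local structure on the self-correspondence $Gr(\mathcal{T})$ itself, combined with the locality of $L$ and flat base change for the finite flat map $pr_{2}$. Throughout I fix a splitting $\alpha=\alpha_{1}+\alpha_{2}$, write $a=a^{\alpha_{1},\alpha_{2}}\colon\mathcal{H}^{\alpha_{1}}\times\mathcal{H}^{\alpha_{2}}\to\mathcal{H}^{\alpha}$ for the addition map and $p_{1},p_{2}$ for the two projections $\mathcal{H}^{\alpha_{1}}\times\mathcal{H}^{\alpha_{2}}\to\mathcal{H}^{\alpha_{i}}$, and I work over the disjoint locus $(\mathcal{H}^{\alpha_{1}}\times\mathcal{H}^{\alpha_{2}})_{disj}$; on the factor $Gr(\mathcal{T}^{\alpha_{i}})$ I denote the correspondence maps by $pr_{1}^{(i)},pr_{2}^{(i)}$.

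First I would exhibit the local structure on $Gr(\mathcal{T})$. Over the disjoint locus the pullback $a^{*}\mathcal{T}^{\alpha}$ of the tautological subscheme is the disjoint union of the two pulled-back tautological subschemes of $\mathcal{H}^{\alpha_{1}}$ and $\mathcal{H}^{\alpha_{2}}$, since the $I$-colored divisors $D$ and $D'$ have disjoint support. The key algebraic fact is that when a finite scheme splits as a disjoint union $D\sqcup D'$, the idempotents in $\mathcal{O}_{D\sqcup D'}=\mathcal{O}_{D}\times\mathcal{O}_{D'}$ force every subscheme to split as well; applied in families this gives a canonical isomorphism
\[
a^{*}Gr(\mathcal{T}^{\alpha})\;\cong\;p_{1}^{*}Gr(\mathcal{T}^{\alpha_{1}})\times_{(\mathcal{H}^{\alpha_{1}}\times\mathcal{H}^{\alpha_{2}})_{disj}}p_{2}^{*}Gr(\mathcal{T}^{\alpha_{2}}).
\]
Under it the correspondence map $pr_{2}$ is identified with the pair $(pr_{2}^{(1)},pr_{2}^{(2)})$, while $pr_{1}$, which records the chosen subscheme $D''=D_{1}''\sqcup D_{2}''$, is identified with the sum of $pr_{1}^{(1)}$ and $pr_{1}^{(2)}$; in particular $pr_{1}$ again lands in the disjoint locus, since subschemes of disjoint divisors are disjoint.

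Next I would assemble $\mathcal{I}$. Pulling $L$ back along $pr_{1}$ and using the previous identification, the locality isomorphism $i_{D_{1}'',D_{2}''}\colon L_{D_{1}''}\otimes L_{D_{2}''}\xrightarrow{\sim}L_{D_{1}''\cup D_{2}''}$ of $L$ shows that $pr_{1}^{*}L$ on the fiber product is the external tensor product of $(pr_{1}^{(1)})^{*}L$ and $(pr_{1}^{(2)})^{*}L$. Pushing forward along $pr_{2}$, which is finite flat by the previous lemma, and invoking the relative Künneth formula for finite flat maps over the common base $(\mathcal{H}^{\alpha_{1}}\times\mathcal{H}^{\alpha_{2}})_{disj}$, I obtain
\[
a^{*}\mathcal{I}^{\alpha}=(pr_{2})_{*}pr_{1}^{*}L\;\cong\;\bigl((pr_{2}^{(1)})_{*}(pr_{1}^{(1)})^{*}L\bigr)\boxtimes\bigl((pr_{2}^{(2)})_{*}(pr_{1}^{(2)})^{*}L\bigr)=\mathcal{I}^{\alpha_{1}}\boxtimes\mathcal{I}^{\alpha_{2}}.
\]
Dualizing yields the required isomorphism $V^{\alpha_{1}}\boxtimes V^{\alpha_{2}}\xrightarrow{\sim}a^{*}V^{\alpha}$, which is the desired locality structure on $V$.

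Finally I would verify the associativity, commutativity and unital axioms; these reduce to the same axioms for $L$ together with the symmetric-monoidal compatibilities of the decomposition $Gr(D\sqcup D')\cong Gr(D)\times Gr(D')$ (for instance the two nested ways of decomposing $Gr(D\sqcup D'\sqcup D'')$ agree), which are compatible with the Künneth and base-change isomorphisms used above. The hard part will be the careful bookkeeping of this relative Künneth isomorphism for $(pr_{2})_{*}$ over the disjoint locus, namely the claim that the pushforward of an external tensor product along the fiber product of two finite flat maps is the external tensor product of the pushforwards; finiteness makes $(pr_{2})_{*}$ exact and computable on affines, and flatness guarantees its compatibility with base change to the product $\mathcal{H}^{\alpha_{1}}\times\mathcal{H}^{\alpha_{2}}$. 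Once this compatibility is pinned down, the coherence axioms follow formally, inherited from those of $L$.
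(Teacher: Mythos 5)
Your proposal is correct, and it rests on the same combinatorial heart as the paper's proof --- the fact that a subscheme of a disjoint union $D\sqcup D'$ splits uniquely as a union of a subscheme of $D$ and a subscheme of $D'$, so that $Gr(D\cup D')\cong Gr(D)\times Gr(D')$ and the locality of $L$ can be summed over this decomposition. The difference is in execution. The paper argues fiberwise: it computes $\mathcal{I}_D=\bigoplus_{E\subset D}L_E$ only at a \emph{regular} divisor $D=\sqcup_k p_k$, defines $i^{\mathcal{I}}_{D,D'}$ as the direct sum of the locality isomorphisms of $L$, and dismisses the general case (divisors with multiplicities, and the statement in families over an arbitrary base of the disjoint locus) with ``the general case is similar.'' You instead work scheme-theoretically over all of $(\mathcal{H}^{\alpha_1}\times\mathcal{H}^{\alpha_2})_{disj}$: you first establish a local structure on the correspondence $Gr(\mathcal{T})$ itself via the idempotent argument (which is exactly what makes ``the general case'' go through, since an ideal of $\mathcal{O}_D\times\mathcal{O}_{D'}$ is a product of ideals), then transport the locality of $L$ through $pr_1^*$ and push forward along the finite flat $pr_2$ using the relative K\"unneth isomorphism and flat base change. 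What your route buys is precisely the part the paper leaves implicit: the isomorphism $\mathcal{I}^{\alpha_1}\boxtimes\mathcal{I}^{\alpha_2}\cong a^*\mathcal{I}^{\alpha}$ as sheaves over the whole disjoint locus (matching Remark \ref{locality1}) rather than an identification of fibers at reduced points, plus an honest reduction of the coherence axioms to those of $L$. The K\"unneth step you flag as ``the hard part'' is in fact harmless here: for affine (in particular finite) morphisms the pushforward is computed on quasi-coherent algebras, so $(f\times_S g)_*(\mathcal{F}\boxtimes_S\mathcal{G})\cong f_*\mathcal{F}\otimes_{\mathcal{O}_S}g_*\mathcal{G}$ is an affine-local identity, and flatness of $pr_2$ gives the base-change compatibility you need to identify $a^*\mathcal{I}^{\alpha}$ with the pushforward over the pulled-back correspondence. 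So your proof is a strictly more complete version of the paper's.
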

\begin{proof}
The locality structure is naturally induced from its dual vector bundle so we check the locality structure for $\mathcal{I}$.
Here we only check the case where $D,D'$ are regular for later use. The general case is similar.
We compute the fiber $\mathcal{I}_D$ for $D=\sqcup^{n}_{k=1} p_k$ 
$$\mathcal{I}_D=((pr_2)_{*}(pr_1)^{*}L)_D
=\oplus_{E\subset D} L_{E}.$$
Now for disjoint divisors $D,D'$, 
$$\mathcal{I}_{D}\otimes \mathcal{I}_{D'}\cong(\oplus_{E\subset D} L_{E})\otimes (\oplus_{E'\subset D'} L_{E'})\cong \oplus_{E\subset D,E'\subset D'} L_{E}\otimes L_{E'}\xrightarrow[]{i^{\mathcal{I}}_{D,D'}} \oplus_{E\cup E'\in D\cup D'} L_{E\cup E'}\cong \mathcal{I}_{D\cup D'},$$
where $i^{\mathcal{I}}_{D,D'}$ is the direct sum of the locality isomorphisms of $L$.
We have the last isomorphism since $D, D'$ are disjoint, any subscheme $F\subset D\cup D'$ can be uniquely written as the union of $E\subset D$ and $E'\subset D'$.
\end{proof}

\subsection{Local projective space of a local vector bundle }\label{def of zastava}
To a local vector bundle $V$ over $\mathcal{H}_{C\times I}$ we will associated its local projective space $\mathbb{P}^{loc}(V)\subset \mathbb{P}(V)$.\\

First, at a point $D=p\in C\times I\subset \mathcal{H}_{C\times I}$  the locality condition is empty, so we define $\mathbb{P}^{loc}(V)_{p}=\mathbb{P}(V_{p})$. Now over a regular divisor $D=\sum p_k$, as required by locality condition we define
$$\mathbb{P}^{loc}(V)_{D}=\prod^{n}_{k=1}\mathbb{P}(V_{p_k}).$$
Then the locality structure of $\otimes^{n}_{k=1} V_{p_k}\cong V_D$ gives the Segre embedding 
$$\mathbb{P}^{loc}(V)_{D}\hookrightarrow \mathbb{P}(V)_{D}.$$
Now we have a subspace ${P}^{loc}_{reg}(V)$ in $\mathbb{P}(V)$ defined over $\mathcal{H}_{reg}.$
We define the local projective space $\mathbb{P}^{loc}(V)$ as the closure of $\mathbb{P}^{loc}_{reg}(V)$ in $\mathbb{P}(V).$
By definition $\mathbb{P}^{loc}_{reg}(V)$ is a local space over $\mathcal{H}.$
\begin{rmk}\label{prod2}
Now we represent the Segre embedding.
Denote by $e_i \in \mathbb{N}[I]$ which is $1$ in the $i$-th coordinate and $0$ in the others. 
Denote $C^{\alpha}=\prod_{i\in I}C_i^{n_i}$
Let $q^{\alpha}$ be the quotient map $C^{\alpha}\xrightarrow[]{q^{\alpha}}\mathcal{H}^{\alpha}$.
For $i\in I, 1\leq j \leq n_i$,
let $ pr_{ij}$ be the compositions of  \( C^{\alpha} \xrightarrow[]{\text{i-th projection}} C^{n_i}_{i} \xrightarrow[]{\text{j-th projection}} C_i \).

Let $C^{\alpha}_{reg}=(q^{\alpha})^{-1}(\mathcal{H}^{\alpha}_{reg})$ and $C_{reg}=\sqcup_{\alpha\in \mathbb{N}[I]} C^{\alpha}_{reg}$.
Recall we denote the restriction of $V,\mathcal{I}$ to $C_i$ by $V^{e_i},\mathcal{I}^{e_i}$.
For $\alpha=(n_i)_{i\in I}$, the local structure of $\mathcal{I}$ defines the isomorphism over $C_{reg}$
\begin{equation}\label{local I}
    \bigotimes_{i\in I} \bigotimes_{j\in \{1,\cdots a_i\}}pr_{ij}^{*}\mathcal{I}^{e_i} \xleftarrow[]{i_{\alpha}}(q^{\alpha})^{*}\mathcal{I}^{\alpha}.
    \end{equation}
and the  
Segre embedding over $C_{reg}$ is the corresponding embedding
\begin{equation}\label{embedding V}
\prod_{i\in I} \prod_{j\in \{1,\cdots a_i\}}pr_{ij}^{*}\mathbb{P}(V^{e_i}) \xhookrightarrow{\mathbf{i}^{*}_{\alpha}} (q^{\alpha})^{*}\mathbb{P}(V^{\alpha}).
 \end{equation}
It is more convenient to work over $C^{\alpha}$ other than $\mathcal{H}^{\alpha}$ when doing calculations. We will always pullback objects over $\mathcal{H}^{\alpha}$ by $q^{\alpha}$. 
In practice, we will consider the image of $\mathbf{i}^{*}_{\alpha}$ and its closure over $C^{\alpha}$ which descents to $\mathbb{P}^{loc}_{reg}(V)$ and $\mathbb{P}^{loc}(V)$ respectively under $q^{\alpha}$.
\end{rmk}
 \begin{rmk}\label{fiber dim}
We compute the fiber $V_{p}$ for $p$ a point.
Since $Gr(p)=\{\emptyset,p\}$, we have $V_{p}=L^*_{\emptyset}\oplus L^*_{p}\cong k\oplus L^{*}_{p}$ and $\mathbb{P}(V_{p})\cong \mathbb{P}^1.$
By locality, the fiber $V_D$ at a regular divisor $D$ of length $\alpha$ is\footnote{Since we just compute the fiber we can ignore the index $I$ and only count the total length $|\alpha|$ of $D$.} 
$$V_D=(\oplus_{D'\subset D} L_{D'})^{*}\cong k^{2^{|\alpha|}}.$$
Hence over regular divisors, the fiber of $\mathbb{P}^{loc}(V^{\alpha})$ is a product of $\mathbb{P}^1$'s and the local space $\mathbb{P}^{loc}(V^{\alpha})$
can be viewed as a degeneration of a product of $\mathbb{P}^1$'s in $\mathbb{P}^{2^{|\alpha|}-1}.$
 \end{rmk}
 
\subsection{Zastava space $Z_{\kappa}$ of a symmetric matrix $\kappa$}\label{kappa}
We define a local line bundle over $\mathcal{H}_{C\times I}$ associated with a symmetric integral matrix $\kappa\in M_{I}(\mathbb{Z})$ as
$$L_{\kappa}=\mathcal{O}_{\mathcal{H}}(-\kappa\Delta),$$
where $\kappa\Delta$ is the divisor $\sum_{i\leq j}\kappa_{ij}\Delta_{ij}$ in $\mathcal{H}_{C\times I}.$
On each connected component, we can embed $L_{\kappa}$ into the sheaf of the fraction field of $C$.
The locality structure of $L$ is then the multiplication of rational functions.

For the local line bundle $L_{\kappa}$, denote the induced vector bundle by $\mathcal{I}_{\kappa}$ and its dual by $V_{\kappa}$.
We define $Z^{\alpha}_{\kappa}=\mathbb{P}^{loc}(V_{\kappa}^{\alpha}). $ Now we fix a matrix $\kappa$ and drop $\kappa$ from the notation.
Denote the projection $Z^{\alpha}\xrightarrow[]{\pi}\mathcal{H}^{\alpha}.$ Abusing notation, we will also denote the pullback of $\pi$ under $q^{\alpha}$ by $\pi$. 
\subsection{ $Z^{\alpha}$ for $C=\mathbb{A}^1$ }\label{compute}
From now on, we set $C=\mathbb{A}^1$ to be the affine line.
We fix a dimension vector $\alpha$. Since the matrix $\kappa$ is also fixed we write $L=L_{\kappa},\mathcal{I}=\mathcal{I}_{\kappa},V=V_{\kappa}.$ 
Denote by $\mathcal{S}_{\mathcal{H}^{\alpha}} (\mathcal{I}^{\alpha})$ the symmetric algebra generated by $\mathcal{I}^{\alpha}$ over $\mathcal{H}^{\alpha}$. Let $A_0$ be\footnote{The notation $A_0$ is only used in section \ref{compute}. It does not reflect its dependence on $\alpha$. Since we fix $\alpha$ in \ref{compute}, this will not cause confusion. The same happens for notation $\mathbf{I}$.} its pullback under $q^{\alpha},$ hence $Proj(A_0)\cong (q^{\alpha})^{*}\mathbb{P}(V^{\alpha}).$
Let $A_{0,reg}$ be its localization over $C^{\alpha}_{reg}=(q^{\alpha})^{*}\mathcal{H}_{reg}$.\\
%%For a projective variety $X$ over $Y=specB$, once we have an embedding $
%X \hookrightarrow \mathbb{P}_Y(V)$ we can define its homogeneous coordinate ring to be $\oplus_{i\geq 0} \Gamma(X,\mathcal{O}(i))$ and we have $Proj(\oplus_{i\geq 0} \Gamma(X,\mathcal{O}(i))\cong X.$
%This ring $\oplus_{i\geq 0} \Gamma(X,\mathcal{O}(i))$ is the quotient of $\oplus_{i\geq 0} \Gamma(\mathbb{P}^n,\mathcal{O}(i))=B[x_1,\cdots,x_{n+1}]$, we call the kernel the defining ideal of $X$ in $\mathbb{P}_Y(V).$
%\begin{rmk}\footnote{We will not use this remark.}
%$Proj(B[x_1,\cdots,x_{n+1}]/J)=Proj(B[x_1,\cdots,x_{n+1}]/I)$ if and only if for $m$ big enough, $J_m=I_m.$ For any $J$ satisfying this, $I$ is the saturation of $J$ w.r.t the irrelevant ideal.
%\end{rmk}
Let $\mathbf{I}_{reg}$ be an ideal in $A_{0,reg}$ such that $Proj(A_{0,reg}/\mathbf{I}_{reg})\cong (q^{\alpha})^{*}Z^{\alpha}|_{reg}.$\\ 
Since $\mathcal{I}^{\alpha}$ is a free sheaf over $\mathcal{H}^{\alpha}$, once we choose a trivialization of it, we can write the algebra $A_0$ as a polynomial ring.

%The next theorem tells us that if we could write $\mathbf{I}^{reg}$ in terms of generators of $A_{0,reg}$, we could understand $\mathbf{I}$ that defines $Z^{\alpha}$ in $(q^{\alpha})^{*}\mathbb{P}(V^{\alpha}).$ 

\subsubsection{Explicit description of locality structure on $\mathcal{I}$}
 In this subsection we will study the ideal $\mathbf{I}_{reg}$ that defines the Segre embedding on $C_{reg}.$

Since we work over $C^{\alpha}$ rather than $\mathcal{H}^{\alpha}$, it is necessary to understand the pullback of the line bundle $L$ under $q^{\alpha}$.
We fix a coordinate $a$ on $C=A^1$ so get a coordinate system $(a^{i}_{j})_{i\in I, j\in \{1,\cdots,n_i\}}$ for $C^{\alpha}$. 

\begin{lemma}\label{pullback divisor}

The pullback of the divisors are
$$(q^{\alpha})^{*}\Delta_{ii}=div(\prod_{l\neq j}(a^{i}_l-a^{i}_j),  \text{ }(q^{\alpha})^{*}\Delta_{ii'}=div(\prod_{l, j}(a^{i}_l-a^{i'}_j)).$$
   
\end{lemma}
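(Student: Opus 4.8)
The plan is to exploit the product structure of the symmetrization map and reduce each pullback to a classical computation with the discriminant and the resultant of monic polynomials. First I would note that $q^{\alpha}$ is the product $\prod_{i\in I} q_i$ of the individual symmetrization maps $q_i\colon C_i^{n_i}\to S^{n_i}C_i=\mathcal{H}^{n_i}_{C_i}$, and that $\Delta_{ii}$ is pulled back (along the projection to the single factor $\mathcal{H}^{n_i}_{C_i}$) from the discriminant locus of that factor, while $\Delta_{ii'}$ for $i\neq i'$ is pulled back from the two factors $\mathcal{H}^{n_i}_{C_i}\times\mathcal{H}^{n_{i'}}_{C_{i'}}$. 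Since pullback of Cartier divisors commutes with the projections $C^{\alpha}\to C_i^{n_i}$ and $C^{\alpha}\to C_i^{n_i}\times C_{i'}^{n_{i'}}$, it suffices to treat the one-color case $S^{n}C$ and the two-color case $S^{n}C\times S^{m}C$ separately.

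For $C=\mathbb{A}^1$ I would use the identification $S^n\mathbb{A}^1\cong \mathbb{A}^n$ sending an unordered tuple $\{a_1,\dots,a_n\}$ to the coefficients of the monic polynomial $f(t)=\prod_{l}(t-a_l)$; under this identification $q_i$ becomes the passage from roots to elementary symmetric functions. The discriminant divisor $\Delta_{ii}$ is the reduced hypersurface cut out by the discriminant $\delta$, which is irreducible (hence squarefree) as a polynomial in the symmetric coordinates, so $\Delta_{ii}=\mathrm{div}(\delta)$. Pulling back gives $q_i^{*}\delta=\prod_{l\neq j}(a_l-a_j)=\pm\prod_{l<j}(a_l-a_j)^2$, whence $(q^{\alpha})^{*}\Delta_{ii}=\mathrm{div}\bigl(\prod_{l\neq j}(a^{i}_l-a^{i}_j)\bigr)$. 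The genuinely delicate point here is the factor of $2$: although $\Delta_{ii}$ is reduced on $S^nC$, the map $q_i$ ramifies to order $2$ generically along each diagonal $\{a_l=a_j\}$, so the pullback acquires multiplicity $2$, which is exactly what the product over ordered pairs $l\neq j$ records.

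For $i\neq i'$ I would identify $\Delta_{ii'}$ with the vanishing locus of the resultant $\mathrm{Res}(f,g)$ of the two universal monic polynomials $f(t)=\prod_{l}(t-a^{i}_l)$ and $g(t)=\prod_{j}(t-a^{i'}_j)$, since $D_i$ and $D_{i'}$ meet precisely when $f$ and $g$ share a root. As $\mathrm{Res}(f,g)$ is separately symmetric in each set of variables it descends to a function on $S^{n_i}C_i\times S^{n_{i'}}C_{i'}$ cutting out the reduced divisor $\Delta_{ii'}$, and the classical root formula gives $q^{*}\mathrm{Res}(f,g)=\prod_{l,j}(a^{i}_l-a^{i'}_j)$, so $(q^{\alpha})^{*}\Delta_{ii'}=\mathrm{div}\bigl(\prod_{l,j}(a^{i}_l-a^{i'}_j)\bigr)$; here no squaring occurs because the distinct irreducible divisors $\{a^{i}_l=a^{i'}_j\}$ each appear once, reflecting that mixing two different colors is unramified. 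I expect the main obstacle to be bookkeeping the scheme structures rather than any deep geometry: one must pin down that $\Delta_{ii}$ and $\Delta_{ii'}$ carry their reduced structures on $\mathcal{H}$, that $\delta$ and $\mathrm{Res}$ generate the corresponding reduced ideals, and that the order-$2$ ramification of $q_i$ along the diagonal produces precisely the multiplicity $2$ in the first formula and no extra multiplicity in the second.
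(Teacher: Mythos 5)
Your proof is correct and follows essentially the same route as the paper's, which simply declares the second formula clear and reduces the first to the case $I=\{1\}$, $\alpha=2$ (i.e., to the generic situation where only two points of one color collide). Your version is in fact more complete: identifying $\Delta_{ii}$ and $\Delta_{ii'}$ with the discriminant and resultant loci and noting that the ordered product $\prod_{l\neq j}(a^{i}_l-a^{i}_j)=\pm\prod_{l<j}(a^{i}_l-a^{i}_j)^2$ accounts for the multiplicity $2$ along each diagonal are exactly the details the paper leaves implicit.
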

\begin{proof}
The second formula is clear. This first follows from the case when $I=\{1\}$ and $\alpha=2$ (recall $\alpha\in \mathbb{Z}^I$ is a dimension vector and when $I$ is a one-point set, $\alpha\in \mathbb{Z})$.  
\end{proof}
%We now explain how (part of) the locality structure of $\mathcal{I}^{\alpha}$ is induced from (part of) the locality structure of $L$ more explicitly. 
Denote the map replacing $\mathcal{I}$ by $L$ in (\ref{local I}) by $i^{L}_{\alpha}$.
We will describe the map $i_{\alpha}$ in (\ref{local I}) from the corresponding maps $i^{L}_{\alpha}$ of $L$. Note in lemma \ref{pre local of I}, we described the 
locality structure of $\mathcal{I}$ by morphisms between functor of points. 
Recall the map
$$\mathcal{H}\xleftarrow[]{pr_1} Gr(\mathcal{T})\xrightarrow[]{pr_2} \mathcal{H}$$
On the component $\mathcal{H}^{\alpha},$
it is 
$$\sqcup_{0\leq \beta  \leq \alpha}\mathcal{H^{\beta}}\xleftarrow[]{pr^{\alpha}_1} \sqcup_{0\leq \beta  \leq \alpha} Gr^{\beta}(\mathcal{T}^{\alpha})\xrightarrow[]{pr^{\alpha}_2} \mathcal{H}^{\alpha},$$
which is the disjoint union of 
\begin{equation}\label{corres}
\mathcal{H^{\beta}}\xleftarrow[]{pr^{\alpha,\beta}_1} Gr^{\beta}(\mathcal{T}^{\alpha})\xrightarrow[]{pr^{\alpha,\beta}_2} \mathcal{H}^{\alpha},
\end{equation}
Let $\mathcal{I}^{\alpha,\beta}=(pr^{\alpha,\beta}_2)_{*}(pr^{\alpha,\beta}_{1})^{*}L.$
For $\alpha$ we define an $I$-colored index set $S^{\alpha}=\sqcup_{i\in I} \{1,\cdots,n_i\}.$
For any subset $S\subset S^{\alpha},$ let $S_i$ be the $i$-th component of $S$ and let $|S|=(|S_i|)_{i\in I}.$
Define $C^S=\prod_{i\in I} \prod_{j\in S_i} C_{ij}$, where $C_{ij}=C$ so that $C^{\alpha}=C^{S^{\alpha}}.$ 
Now we will see what $(q^{\alpha})^{*}\mathcal{I}^{\alpha,\beta}$ is.
We introduce the following 
\begin{equation}\label{pullback of corres}
\mathcal{H}^{\beta} \xleftarrow[]{pr_3^{\alpha,\beta}}(q^{\alpha})^{*}Gr^{\beta}(\mathcal{T}^{\alpha})\xrightarrow[]{pr^{\alpha,\beta}_4:=(q^{\alpha})^{*}(pr_2^{\alpha,\beta})}C^{\alpha},
\end{equation}
where $pr_3^{\alpha,\beta}$ is the composition of maps from $(q^{\alpha})^{*}Gr^{\beta}(\mathcal{T}^{\alpha})$ to $Gr^{\beta}(\mathcal{T}^{\alpha})$ and  $Gr^{\beta}(\mathcal{T}^{\alpha})\xrightarrow[]{pr^{\alpha,\beta}_{1}}\mathcal{H}^{\beta}$.
We have 
 \begin{equation}\label{pullback of I}
(q^{\alpha})^{*}\mathcal{I}^{\alpha,\beta}\cong (pr^{\alpha,\beta}_4)_{*}(pr^{\alpha,\beta}_{3})^{*}(L).
 \end{equation}
Over $C^{\alpha}_{reg},$
$(q^{\alpha})^{*}Gr^{\beta}(\mathcal{T}^{\alpha}_{reg})\cong \sqcup_{S,|S|=|\beta|} C^{\alpha}_{reg}$
%For $S$ such that $|S|=|\beta|$, identifying $C^S$ with $C^{\beta}$,
and 
the map $pr_1^{\alpha,\beta}$ factors through $\sqcup_{S,|S|=|\beta|}C^{\beta}_{reg}$ so (\ref{pullback of corres}) becomes
\begin{equation}\label{reason of decom of I regular}
\mathcal{H}_{reg}^{\beta}\xleftarrow[]{\sqcup q^{\beta}}\sqcup_{S,|S|=|\beta|}C^{\beta}_{reg} \xleftarrow[]{\sqcup pr^{\alpha,S}}\sqcup_{S,|S|=|\beta|}C^{\alpha}_{reg}\xrightarrow[]{\sqcup id}C^{\alpha}_{reg},    
\end{equation}
where $pr^{\alpha,S}$ is the projection of the $S$-components of $C^{\alpha}$ to $C^S$ composed with the identification map $C^{S}\cong C^{\beta}$ (and we still denote by the same notation $pr^{\alpha,S}$ when restricting on the regular part).
On the $S$-component, this is 
\begin{equation}\label{S component of reason of decom of I regular}
\mathcal{H}_{reg}^{\beta}\xleftarrow[]{q^{\beta}}C^{\beta}_{reg}\xleftarrow[]{pr^{\alpha,S}} C^{\alpha}_{reg}=C^{\alpha}_{reg}.
\end{equation}
 Denote the pullback of $L_{reg}^{\beta}$ under $pr^{\alpha,S}\circ q^{\beta}$ by $L_{reg}^{\alpha,S}$.
 %\footnote{Here $L_{reg}^S$ depends on $\alpha$ but we omit $\alpha$ from the notation.} .
Hence $(q^{\alpha})^{*}\mathcal{I}^{\alpha}|_{reg}$
is the direct sum of line bundles $L_{reg}^{\alpha,S}$, i.e.
 \begin{equation}\label{decomp}
(q^{\alpha})^{*}\mathcal{I}^{\alpha}_{reg}\cong
\oplus_{0\leq \beta \leq  \alpha}  (q^{\alpha})^{*}\mathcal{I}^{\alpha,\beta}_{reg}
\cong \oplus_{0\leq \beta \leq \alpha} \oplus_{S,|S|=|\beta|} L^{\alpha,S}_{reg}=\oplus_{S,S\subset S^{\alpha}} L^{\alpha,S}_{reg}.
 \end{equation}
Here when $\beta=0$, $S=\emptyset$ and $L^{\alpha,\emptyset}$ is the pullback from $C^0$ to $C^{\alpha}$, which is the trivial line bundle (structure sheaf). 
Apply (\ref{decomp}) to the case where $\alpha=e_i$, since $q^{e_i}=id$ and $C^{e_i}_{reg}=C^{e_i}$, we have 
$$\mathcal{I}^{e_i} \cong L^{e_i,\emptyset} \oplus L^{e_i}.$$
 Pulling back under $C^{\alpha}\xrightarrow[]{pr_{ij}} C^{e_i}$, we have 
$$pr_{ij}^{*}\mathcal{I}^{e_i} \cong L^{\alpha,\emptyset} \oplus pr_{ij}^{*}L^{e_i}.$$
Now the locality isomorphism $i_{\alpha}$ of $\mathcal{I}$ in (\ref{local I}) (to be described) becomes
$$(\otimes_{i\in I}\otimes_{j\in (S_{\alpha})_i} (L^{\alpha,\emptyset} \oplus pr_{ij}^{*}L^{e_i}))|_{C^{\alpha}_{reg}}\xleftarrow[]{i_{\alpha}} \oplus_{S,S\subset S_{\alpha}} L^{\alpha,S}_{reg}.$$
Expanding the tensor product, since  $L^{\alpha,\emptyset}\otimes \mathcal{L}$ is the trivial line bundle, for any $S\subset S^{\alpha}$
the $S$-component of the map $i_{\alpha}$ is denoted by $i^{S}_{\alpha}$
\begin{equation}\label{localS}
    \otimes_{i\in I} \otimes_{j\in S_i} pr_{ij}^{*}L^{e_i}|_{C^{\alpha}_{reg}}\xleftarrow[]{i_\alpha^S}  L_{reg}^{\alpha,S}. 
    \end{equation}
Finally, we can describe the isomorphism $i^{S}_{\alpha}$.
It is the pullback under $C^{\beta}_{reg} \xleftarrow[]{pr^S} C^{\alpha}_{reg}$ for $\beta=|S|$
of an open part\footnote{In remark (\ref{locality1}), we decompose $\beta$ completely.} of the locality structure of $L^{\beta}$
\begin{equation}\label{local beta}
\otimes_{i\in I} \otimes_{j\in (S_{\beta})_i}pr^{*}_{ij}(L^{e_i})|_{C^{\beta}_{reg}}\xleftarrow[\cong]{i^{L}_{\beta}} (q^{\beta})^{*}L^{\beta}|_{C^{\beta}_{reg}}.
\end{equation}

It is not hard to see that $i_{\alpha}$ as described above are the isomorphisms that represent the locality isomorphisms in lemma \ref{pre local of I}.    

\subsubsection{Bases of global section $\Gamma(C^{\alpha},(q^{\alpha})^{*}\mathcal{I})$ as $\mathcal{O}(C^{\alpha})$-module (global bases) }\label{global basis for zastava}
For each $\beta$, we fix a basis $s^{\beta}\in \Gamma(C^{\beta},(q^{\beta})^{*}L^{\beta})$ adapted to the above local structure  as follows.
 First we fix a basis $s_i$ of global section of $L^{e_i}$ over $\mathcal{H}^{e_i}.$
 For any $\beta=(k_i)_{i\in I}\in \mathbb{N}[I],$
 let
\begin{equation}\label{def local  basis}
s^{\beta}=l(\beta)i_{\beta}(\otimes_{i\in I}\otimes_{j\in (S_{\beta})_i}pr^{*}_{ij}s_i)\in \Gamma(C^{\beta}_{reg},(q^{\beta})^{*}L^{\beta}|_{C_{reg}}),
\end{equation}
where \footnote{Here, we choose a basis according to $\kappa$, later, we will define $l_Q(\beta)$ depending on the quiver $Q$.}
 \begin{equation}\label{local fac}
 l(\beta)=\prod_{i<i', l\in (S_{\beta})_i,j\in (S_{\beta})_{i'}} (a^{i}_l-a^{i'}_j)^{\kappa_{ii'}} \prod_{i\in I,l\neq j,l,j\in(S_{\beta})_i} (a^{i}_l-a^{i}_j)^{\kappa_{ii}}
 \end{equation}
is in the fraction field $\mathcal{K}(\mathcal{H}^{\beta})$ of $\mathcal{H}^{\beta}.$
Here we still denote by $i_{\beta}$ the induced map between global sections of line bundles in (\ref{local beta}).
By lemma \ref{pullback divisor}, the section $s^{\beta}$ extend across the diagonal to a section of $C^{\beta}.$ Let $W^{\beta}$ be the group such that $C^{\beta}/W^{\beta}\cong \mathcal{H}^{\beta}$. It is easy to check that $s^{\beta}$ is $W^{\beta}$-invariant. Let $u^{\beta}\in \Gamma(\mathcal{H}^{\beta},L^{\beta})$ be the descent of $s^{\beta}$ under $q^{\beta}.$

%%and $W=\prod_{i\in I}W_i$, $W_i=Symm_{b_i}$ act on $P(\{1,\cdots,b_i\})$.
Let $M^{\beta}$ be the global sections of $L^{\beta}$ over $\mathcal{H}^{\beta}$. It is a rank 1 free $\mathcal{O}(\mathcal{H}^{\beta})$-module generated by $u^{\beta}$. By (\ref{pullback of I}),
$$\Gamma(C^{\alpha},(q^{\alpha})^{*}\mathcal{I}^{\alpha,\beta})= M^{\beta}\otimes_{\mathcal{O}(\mathcal{H}^{\beta})} \mathcal{O}((q^{\alpha})^{*}Gr^{\beta}(\mathcal{T}^{\alpha}))$$ as $O(C^{\alpha})$-module. By choosing any basis $\{y^{\alpha,\beta}_p\}$ of $\mathcal{O}((q^{\alpha})^{*}Gr^{\beta}(\mathcal{T}^{\alpha}))$  as $\mathcal{O}(C^{\alpha})$-module, which is chosen as a basis of $\mathcal{O}(Gr^{\beta}(\mathcal{T}^{\alpha}))$ as $\mathcal{O}(\mathcal{H}^{\alpha})$-module, we have the basis $\{z^{\alpha,\beta}_p=u^{\beta} \otimes y_p\}$ of $\Gamma(C^{\alpha},(q^{\alpha})^{*}\mathcal{I}^{\alpha,\beta})$.
When $\alpha$ is fixed, we abbreviate $z^{\alpha,\beta}_p$ as $z^{\beta}_p$.
Such a basis is called a global basis.

\subsubsection{The basis of $\Gamma(C^{\alpha}_{reg},(q^{\alpha})^{*}\mathcal{I}_{reg})$ (local basis)}\label{local basis of zas}
From $1\in  \mathcal{O}((q^{\alpha})^{*}Gr^{\beta}(\mathcal{H}^{\alpha}))$, we get an element
$u^{\beta}\otimes 1$ in $$\Gamma(C^{\alpha},(q^{\alpha})^{*}\mathcal{I}^{\alpha,\beta})= M^{\beta}\otimes_{\mathcal{O}(\mathcal{H}^{\beta})} \mathcal{O}((q^{\alpha})^{*}Gr^{\beta}(\mathcal{T}^{\alpha})).$$ 
Restricting $u^{\beta}\otimes 1$ to $C^{\alpha}_{reg}$, by (\ref{reason of decom of I regular}), we have 
$s^{\beta}|_{reg}\otimes 1$ in  $$\Gamma(C_{reg}^{\alpha},(q^{\alpha})^{*}\mathcal{I}_{reg}^{\alpha,\beta})=
\bigoplus_{S,|S|=\beta}\Gamma(C_{reg}^{\alpha}, L^{\alpha,S}_{reg})=
\bigoplus_{S,|S|=\beta}\Gamma(C_{reg}^{\beta},(q^{\beta})^{*}L^{\beta}_{reg})\otimes_{\mathcal{O}(C^{\beta}_{reg})}{\mathcal{O}(C^{\alpha}_{reg})}.$$ 
In the decomposition in (\ref{decomp}),  
denote the $S$-component of $s^{\beta}|_{reg}\otimes 1$ by $s^{\alpha,S}\in \Gamma(C^{\alpha}_{reg},L^{\alpha,S}_{reg})$. We will refer to $s^{\alpha,S}$ as the local basis.
Now we fix an $\alpha$ and abbreviate $s^{\alpha,S}$ as $s^{S}.$
By our choices of bases $s^{\beta}$ in (\ref{def local  basis}) and the relation between (\ref{localS}) and (\ref{local beta}), we have
\begin{equation}\label{induced def of local}
s^{S}=l(S)i^{S}_{\alpha}(\otimes_{i\in I}\otimes_{j\in S_i}pr^{*}_{ij}s_i),
\end{equation}
 where  \begin{equation}
 l(S)=\prod_{i<i', l\in S_i,j\in S_{i'}} (a^{i}_l-a^{i'}_j)^{\kappa_{ii'}} \prod_{i\in I,l\neq j,l,j\in S_i}(a^{i}_l-a^{i}_j)^{\kappa_{ii}}
 \end{equation}
is the pullback of $l(\beta)$ under $C^{\beta}_{reg} \xleftarrow[]{pr^{\alpha,S}} C^{\alpha}_{reg}$ (see (\ref{S component of reason of decom of I regular})) and we still denote by $i^S_{\alpha}$ the induced map between global sections of line bundles in (\ref{localS}). By definition $\{s^S, S\subset S^{\alpha}\}$ generate the algebra $A_{0,reg}$ over $\mathcal{O}(C_{reg}^{\alpha})$.
 Now we can describe the ideal $\mathbf{I}_{reg}$ in terms of local basis $s^{S}$.
 \begin{lemma}\label{ideal in local}
The ideal $\mathbf{I}_{reg}$ is generated by 
%%$$s^A s^B=\frac{l(A)l(B)}{l(A\cup B)l(a\cap B)}s^{A\cup B} s^{A\cap B},$$ for all $A,B\subset S_{\alpha}.$
$$\frac{s^A}{l(A)}\frac{s^B}{l(B)}=\frac{s^{A\cup B}}{l(A\cup B)}\frac{s^{A\cap B}}{l(A\cap B)}$$
 for all $A,B\subset S_{\alpha}.$
 \end{lemma}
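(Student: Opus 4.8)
The plan is to use that, over the regular locus, $(q^{\alpha})^{*}Z^{\alpha}|_{reg}$ is by construction (Remark \ref{prod2} and Section \ref{def of zastava}) the relative Segre embedding of the fiberwise product of $\mathbb{P}^{1}$'s, $\prod_{k}\mathbb{P}(V^{e_{i_{k}}})$, into $\mathbb{P}_{C^{\alpha}_{reg}}(V^{\alpha})$; thus $\mathbf{I}_{reg}$ is precisely the (relative) Segre ideal, rewritten in the local coordinates $s^{S}$. First I would check that $\{s^{S}\}_{S\subset S^{\alpha}}$ is an honest $\mathcal{O}(C^{\alpha}_{reg})$-basis of $(q^{\alpha})^{*}\mathcal{I}^{\alpha}_{reg}$: by (\ref{induced def of local}) each $s^{S}$ equals $l(S)$ times the image under the locality isomorphism (\ref{localS}) of the basis vector $\otimes_{k\in S}pr^{*}_{ij}s_{i}$, and by Lemma \ref{pullback divisor} the factor $l(S)$ is a product of differences $a^{i}_{l}-a^{i'}_{j}$, each of which is invertible on the regular part. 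Hence $l(S)\in\mathcal{O}(C^{\alpha}_{reg})^{\times}$ and $A_{0,reg}=\mathcal{O}(C^{\alpha}_{reg})[\,s^{S}:S\subset S^{\alpha}\,]$ is a polynomial ring in the $2^{|\alpha|}$ variables $s^{S}$.

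Next I would make the Segre identification explicit. Write $[\,y_{k}:t_{k}\,]$ for the fiber coordinates of the $k$-th factor $\mathbb{P}(V^{e_{i_{k}}})$, where $t_{k}=pr^{*}_{ij}s_{i}$ is the distinguished section of $L^{e_{i_{k}}}$ and $y_{k}$ trivializes the trivial summand $L^{e_{i_{k}},\emptyset}=\mathcal{O}$. Combining (\ref{decomp}), (\ref{localS}) and (\ref{induced def of local}), the ring map defining the Segre embedding sends the generator $s^{S}$ to $l(S)\,z_{S}$, where $z_{S}=\prod_{k\in S}t_{k}\prod_{k\notin S}y_{k}$ is the standard degree-one Segre monomial (in particular $s^{\emptyset}=1$ and $s^{\{k\}}=t_{k}$). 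Therefore $\mathbf{I}_{reg}$ is exactly the kernel of $s^{S}\mapsto l(S)\,z_{S}$.

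It then remains to find generators of this kernel. The binomials $z_{A}z_{B}-z_{A\cup B}z_{A\cap B}$ obviously lie in it, since for each index $k$ both products contribute $t_{k}^{2}$, $t_{k}y_{k}$, or $y_{k}^{2}$ according to whether $k$ lies in $A\cap B$, in the symmetric difference, or outside $A\cup B$. That they generate the whole ideal is the statement that the homogeneous coordinate algebra of the Segre embedding of $(\mathbb{P}^{1})^{|\alpha|}$—equivalently the $|\alpha|$-fold Segre product of $k[y,t]$, which is the Hibi ring of the Boolean lattice $2^{S^{\alpha}}$—has toric ideal generated by these join--meet relations; I would invoke Hibi's theorem for distributive lattices, or give the short straightening argument (the rewriting rule $z_{A}z_{B}\to z_{A\cup B}z_{A\cap B}$ reduces every monomial to a product along a chain, and the resulting standard monomials are independent). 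Since the $l(S)$ are units, substituting $z_{S}=s^{S}/l(S)$ turns each such generator into a unit multiple of $\tfrac{s^{A}}{l(A)}\tfrac{s^{B}}{l(B)}-\tfrac{s^{A\cup B}}{l(A\cup B)}\tfrac{s^{A\cap B}}{l(A\cap B)}$, which are exactly the asserted generators; as $A_{0,reg}$ is free over $\mathcal{O}(C^{\alpha}_{reg})$ and the embedding is fiberwise Segre, this fiberwise computation already yields the full relative ideal.

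The main obstacle I anticipate is the bookkeeping of the second step: verifying through the chain of locality isomorphisms (\ref{decomp})--(\ref{localS}) and the normalization (\ref{induced def of local}) that the Segre map sends $s^{S}$ to exactly $l(S)\,z_{S}$ with no stray factors, so that the $l(S)$-denominators reassemble into precisely the stated binomials. The only genuinely algebraic input—that the join--meet binomials generate the Segre/Hibi ideal scheme-theoretically, not merely set-theoretically—is standard but should be phrased for the relative (family) version over $C^{\alpha}_{reg}$.
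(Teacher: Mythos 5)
Your proposal is correct, and its overall skeleton agrees with the paper's: both arguments first observe that the $l(S)$ are invertible on $C^{\alpha}_{reg}$, so that after the rescaling $z_S = s^S/l(S)$ the ideal $\mathbf{I}_{reg}$ becomes exactly the (relative) Segre ideal of $(\mathbb{P}^1)^{|\alpha|}\hookrightarrow\mathbb{P}^{2^{|\alpha|}-1}$, and then both reduce the lemma to the purely combinatorial claim that this ideal is generated by the join--meet binomials $z_Az_B-z_{A\cup B}z_{A\cap B}$. Where you genuinely diverge is in how that combinatorial core is established. The paper (appendix, due to Mirkovi\'c) realizes $\prod_{p\in D}\mathbb{P}(V_p)$ as the intersection over all binary decompositions $D=X\sqcup Y$ of the two-factor Segre loci $\mathbb{P}(V_X)\times\mathbb{P}(V_Y)\subset\mathbb{P}(V_D)$, takes the $2\times2$ minors of each flattening as equations, and then in the $(\mathbb{P}^1)^n$ case rewrites these as $z_Xz_Y=z_Uz_V$ for $X+Y=U+V$ in $\NN[D]$ (a superficially larger set of relations, all of which follow from the join--meet ones since $X\cup Y=U\cup V$ and $X\cap Y=U\cap V$). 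You instead identify the Segre product of $|\alpha|$ copies of $k[y,t]$ with the Hibi ring of the Boolean lattice $2^{S^{\alpha}}$ and invoke Hibi's theorem, or equivalently the straightening law: rewrite every monomial into a product along a chain and check that standard monomials are independent. Your route has a concrete advantage: the paper's footnote argument only proves the \emph{set-theoretic} statement that a tensor which is pure for every binary flattening is a pure tensor, and leaves the passage to the ideal-theoretic statement to "standard" determinantal facts, whereas the straightening argument directly exhibits the binomials as generators of the kernel of the ring map $s^S\mapsto l(S)z_S$ (and, as you note, works over the base ring $\mathcal{O}(C^{\alpha}_{reg})$ because the target is free over it, so the relative statement comes for free). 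The paper's route, on the other hand, needs no input beyond the classical two-factor Segre ideal. Both are complete modulo standard facts; yours is slightly more self-contained on the ideal-theoretic point.
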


 \begin{proof}
This is a standard result of Segre embedding. Since we cannot find a reference, we give a proof in the appendix.
 \end{proof}
 \subsubsection{Transition between a global basis $z^{\beta}_p$ and the local basis $s^S$ over the regular part}
First, we state a simple lemma about the module structure under localization.
\begin{lemma}\label{module loc structrue}
The following diagram commutes\\
\begin{tikzcd}
\mathcal{O}(Gr^{\beta}(\mathcal{T}^{\alpha}))\times \Gamma(C^{\alpha},(q^{\alpha})^{*}\mathcal{I}^{\alpha,\beta}) \arrow[r] \arrow[d, "loc\times loc"'] & \Gamma(C^{\alpha},(q^{\alpha})^{*}\mathcal{I}^{\alpha,\beta}) \arrow[d, "loc"] \\
\mathcal{O}(Gr^{\beta}(\mathcal{T}_{reg}^{\alpha}))\times \Gamma(C_{reg}^{\alpha},(q^{\alpha})^{*}\mathcal{I}_{reg}^{\alpha,\beta})\arrow[r]\arrow[d, "\cong"] & \Gamma(C_{reg}^{\alpha},(q^{\alpha})^{*}\mathcal{I}_{reg}^{\alpha,\beta})\arrow[d, "\cong"] \\
\bigoplus_{S,|S|=|\beta|} \mathcal{O}(C^{\alpha}_{reg})\times \bigoplus_{S,|S|=\beta}\Gamma(C_{reg}^{\alpha}, L^{\alpha,S}_{reg}) \arrow[r]&\bigoplus_{S,|S|=\beta}\Gamma(C_{reg}^{\alpha}, L^{\alpha,S}_{reg}),\\
\end{tikzcd}\\
where the horizontal maps are the ring actions on the modules.
\end{lemma}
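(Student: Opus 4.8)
The plan is to verify the two squares separately, since each expresses a naturality property rather than a genuine computation. Write $G_\beta=(q^{\alpha})^{*}Gr^{\beta}(\mathcal{T}^{\alpha})$ and recall from \ref{global basis for zastava} that the module $\Gamma(C^{\alpha},(q^{\alpha})^{*}\mathcal{I}^{\alpha,\beta})$ carries its $\mathcal{O}(G_\beta)$-action through the identification $\Gamma(C^{\alpha},(q^{\alpha})^{*}\mathcal{I}^{\alpha,\beta})\cong M^{\beta}\otimes_{\mathcal{O}(\mathcal{H}^{\beta})}\mathcal{O}(G_\beta)$ coming from (\ref{pullback of I}). Both vertical $loc$ maps are simply restriction of sections along the open immersion $C^{\alpha}_{reg}\hookrightarrow C^{\alpha}$ (equivalently $G_{\beta,reg}\hookrightarrow G_\beta$), so the top square is the assertion that this restriction is a map of modules over the corresponding restriction of rings. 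This is immediate from the functoriality of localization: restriction of quasicoherent sheaves to an open subscheme is compatible with the $\mathcal{O}$-module structure, so $loc(f\cdot z)=loc(f)\cdot loc(z)$ holds identically, and the top square commutes.

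For the bottom square I would invoke the explicit description of the regular part from (\ref{reason of decom of I regular}). Over $C^{\alpha}_{reg}$ the map $pr^{\alpha,\beta}_{2}$ splits as a disjoint union, giving $G_{\beta,reg}\cong \sqcup_{S,\,|S|=|\beta|}C^{\alpha}_{reg}$, where the $S$-copy maps to $C^{\alpha}_{reg}$ by the identity $pr^{\alpha,\beta}_{4}$. Taking global functions turns this disjoint union into the ring decomposition $\mathcal{O}(G_{\beta,reg})\cong \bigoplus_{S,\,|S|=|\beta|}\mathcal{O}(C^{\alpha}_{reg})$, and the matching pushforward decomposition (\ref{decomp}) reads $(q^{\alpha})^{*}\mathcal{I}^{\alpha,\beta}_{reg}\cong \bigoplus_{S,\,|S|=|\beta|}L^{\alpha,S}_{reg}$, indexed by the very same set of $S$'s. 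The left vertical isomorphism in the bottom square is the product of these two decompositions and the right vertical one is the second decomposition, so what remains to be checked is only that the $\mathcal{O}(G_{\beta,reg})$-action respects these summands component by component.

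This last point is where the geometry does the work: since $pr^{\alpha,\beta}_{4}$ restricts to the identity on each connected component $C^{\alpha}_{reg}$ of $G_{\beta,reg}$, the pushforward $(pr^{\alpha,\beta}_{4})_{*}(pr^{\alpha,\beta}_{3})^{*}L$ is literally the direct sum of the $L^{\alpha,S}_{reg}$, and the idempotent $e_S\in\mathcal{O}(G_{\beta,reg})$ cutting out the $S$-component acts as the projector onto the $L^{\alpha,S}_{reg}$-summand. Hence a function $f=(f_S)_S$ acts on a section $z=(z_S)_S$ by $(f_S z_S)_S$, which is exactly the component-wise ring action in the bottom row; this makes the bottom square commute.

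I do not expect a serious obstacle: both squares are formal consequences of (i) functoriality of localization and (ii) the disjoint-union description of $G_{\beta,reg}$ from (\ref{reason of decom of I regular}). The only point demanding care is purely bookkeeping, namely confirming that the indexing set $\{S:\,|S|=|\beta|\}$ used for the ring decomposition of $\mathcal{O}(G_{\beta,reg})$ and for the sheaf decomposition (\ref{decomp}) is literally the same, so that the idempotents of the ring line up with the summand projectors of the module. Once this alignment is recorded, commutativity of both squares follows with no further computation.
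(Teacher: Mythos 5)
Your proposal is correct and takes essentially the same route the paper intends: the paper's own proof is simply ``Clear from the definitions,'' and your two observations --- functoriality of restriction to the open subscheme for the top square, and the disjoint-union splitting of $(q^{\alpha})^{*}Gr^{\beta}(\mathcal{T}^{\alpha}_{reg})$ from (\ref{reason of decom of I regular}) aligning the ring idempotents with the summands of (\ref{decomp}) for the bottom square --- are precisely the content being left implicit. Nothing is missing.
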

\begin{proof}
 Clear from the definitions. 
\end{proof} 
Now we will study the following map in more details.
\begin{equation}\label{dec}
\mathcal{O}(Gr^{\beta}(\mathcal{T}^{\alpha}))\xrightarrow[]{|_{reg}}\mathcal{O}(Gr^{\beta}(\mathcal{T}_{reg}^{\alpha}))\cong \bigoplus_{S,S\subset S_{\alpha},|S|=|\beta|} \mathcal{O}(\mathcal{H}^{\alpha}_{reg}).
\end{equation}
Let $\beta=\sum_{i\in I} k_i i$. Denote $f^s()$ the $s$-th elementary symmetric functions on the variables inside the parenthesis.
\begin{theorem}\label{basis for local space}
a) $$\mathcal{O}(Gr^{\beta}(\mathcal{T}^{\alpha}))\cong 
\bigotimes_{i\in I} \mathcal{O}(Gr^{k_i}(\mathcal{T}^{n_i})).$$
b) The map $(pr_1,m)$ 
$$Gr^{k_i}(\mathcal{T}^{n_i})\xrightarrow[]{(pr_1,m)} \mathcal{H}^{k_i}\times \mathcal{H}^{n_i-k_i},$$ where $m((D',D)=D-D'$ is an isomorphism. \\
c) The projection $Gr^{k_i}(\mathcal{T}^{n_i})\xrightarrow[]{pr_2}\mathcal{H}^{n_i}$ is finite flat and 
$\mathcal{O}(Gr^{k_i}(\mathcal{T}^{n_i}))$ is an algebra extension over $\mathcal{O}(\mathcal{H}^{n_i })$ generated by formal variables $c^i_l,d^i_j,1\leq l \leq k_i,1\leq j \leq n_i-k_i$ subject to the relations 
$$<\sum_{1\leq l \leq k_i,l+j=s_i}c^i_l d^i_j-f^{s_i}(a^i_1,\cdots,a^i_{n_i})> $$ for all $s_i$ such that ${1\leq s_i\leq n_i}.$\\
d) Under the above isomorphism and (\ref{dec}), the image of $c^i_l$ in the $S_i$ component is $f^{l}(a^i_r)_{r\in S_i}$ and the image of $d^i_j$ in the $S_i$ component is $f^{j} (a^i_r)_{r\in \{1,\cdots,n_i\} \setminus S_i}$.

\end{theorem}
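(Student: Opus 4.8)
The plan is to reduce the whole statement to the one-color case via (a), prove the crucial isomorphism (b) by passing to monic polynomials, and then read off (c) and (d) from Vieta's formulas. First I would settle (a): for a colored divisor $D=(D_i)_{i\in I}$ the pieces $D_i$ live on the disjoint curves $C_i$, so any subscheme $D'\subset D$ of length $\beta=(k_i)_{i\in I}$ splits uniquely as $D'=\sqcup_{i} D'_i$ with $D'_i\subset D_i$ of length $k_i$. This gives a scheme isomorphism $Gr^{\beta}(\mathcal{T}^{\alpha})\cong \prod_{i\in I} Gr^{k_i}(\mathcal{T}^{n_i})$ over $\mathcal{H}^{\alpha}=\prod_{i\in I}\mathcal{H}^{n_i}$, and taking global functions of a product of affine $k$-schemes yields the tensor product asserted in (a). From here I fix a single color and drop the index $i$, working with $Gr^{k}(\mathcal{T}^{n})\to\mathcal{H}^{n}$ for $C=\mathbb{A}^1$.

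The heart of the argument is (b), and I expect this to be \emph{the main obstacle}. Since $C=\mathbb{A}^1$, a point of $\mathcal{H}^{n}$ is a monic degree-$n$ polynomial $P(a)$, a point of $\mathcal{H}^{k}$ a monic degree-$k$ polynomial $Q$, and a point of $Gr^{k}(\mathcal{T}^{n})$ a pair $(D'\subset D)$ with $Q\mid P$ and $\deg Q=k$; the inclusion $D'\subset D$ is exactly the divisibility $Q\mid P$ (containment of ideals $(P)\subset(Q)$ in $\mathcal{O}_S[a]$), which therefore holds universally. The map $(pr_1,m)$ sends $(D'\subset D)$ to $(Q,\,P/Q)$. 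I would exhibit a two-sided inverse at the level of schemes: the morphism $\mathcal{H}^{k}\times\mathcal{H}^{n-k}\to Gr^{k}(\mathcal{T}^{n})$, $(Q,R)\mapsto (V(Q)\subset V(QR))$, is a morphism because multiplication of monic polynomials is polynomial in the coefficients and $Q\mid QR$ tautologically; conversely $m$ is a morphism because, $Q$ being monic, the monic division algorithm expresses the quotient $R=P/Q$ with coefficients that are regular functions on the locus where $Q\mid P$. The delicate point — and the reason smoothness of $C$ is essential — is precisely that residuation $D\mapsto D-D'$ is a genuine morphism rather than a mere set map; this uses that effective divisors on a smooth curve correspond to monic polynomials with unique division, and it is exactly this step that can fail over a singular curve.

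Granting (b), parts (c) and (d) are formal. From (b) we get $\mathcal{O}(Gr^{k}(\mathcal{T}^{n}))\cong k[c_1,\dots,c_k]\otimes k[d_1,\dots,d_{n-k}]$, where $c_l,d_j$ are the elementary symmetric functions serving as coordinates on $\mathcal{H}^{k}=\mathbb{A}^k$ and $\mathcal{H}^{n-k}=\mathbb{A}^{n-k}$. The structure map $pr_2$ is addition of divisors, i.e. $P=QR$; comparing coefficients (Vieta, with the convention $c_0=d_0=1$, the signs cancelling) gives $f^{s}(a_1,\dots,a_n)=\sum_{l+j=s}c_l d_j$, which is the asserted relation. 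Since these $n$ relations let one eliminate the $n$ generators $f_1,\dots,f_n$ of $\mathcal{O}(\mathcal{H}^{n})$, the quotient $k[f][c,d]/(\text{rel})$ is isomorphic to $k[c,d]$, giving the presentation in (c); finite flatness of $pr_2$ is either the earlier lemma on $pr_2$ or miracle flatness for the finite map $\mathcal{H}^{k}\times\mathcal{H}^{n-k}\to\mathcal{H}^{n}$ between smooth varieties with $0$-dimensional fibers. For (d), I would unwind the regular locus: over $\mathcal{H}^{\alpha}_{reg}$ the divisor $D$ is multiplicity-free, and labelling its $n_i$ points by the coordinates $a^i_r$, a subset $S\subset S^{\alpha}$ with $|S|=|\beta|$ records exactly which points lie in $D'$. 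On the $S$-component one has $D'_i=\{a^i_r : r\in S_i\}$ with complement $\{a^i_r : r\in\{1,\dots,n_i\}\setminus S_i\}$, so evaluating $c^i_l$ and $d^i_j$ as elementary symmetric functions of these roots yields the claimed images $f^{l}(a^i_r)_{r\in S_i}$ and $f^{j}(a^i_r)_{r\in\{1,\dots,n_i\}\setminus S_i}$.
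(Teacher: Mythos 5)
Your proof is correct and follows the same overall route as the paper: reduce to a single color via (a), use the isomorphism $(pr_1,m)$ of (b) to produce the generators $c_l,d_j$ as pullbacks of elementary symmetric functions under $pr_1$ and $m$, and verify the relation $\sum_{l+j=s}c_ld_j=f^{s}$ by comparing coefficients of $P=QR$. You supply genuine content in two places where the paper is terse. For (b) the paper writes only ``Clear''; your monic-division argument (a flag $D'\subset D$ over $\mathbb{A}^1$ is a divisibility $Q\mid P$ of monic polynomials, and both the product $(Q,R)\mapsto QR$ and the quotient $P\mapsto P/Q$ are polynomial in the coefficients because $Q$ is monic) is the intended justification and correctly isolates why smoothness of $C$ is what makes residuation a morphism. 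For the completeness of the relations in (c) the two arguments differ: the paper counts degrees, observing that $pr_2$ is finite flat of degree $\binom{n}{k}$ (read off over $\mathcal{H}_{reg}$) and that the presented algebra has the same rank, so the surjection onto $\mathcal{O}(Gr^{k}(\mathcal{T}^{n}))$ is an isomorphism; your elimination argument instead notes that the $n$ relations solve for the free polynomial generators $f^{1},\dots,f^{n}$ of $\mathcal{O}(\mathcal{H}^{n})$, so $\mathcal{O}(\mathcal{H}^{n})[c,d]/I\cong k[c,d]\cong\mathcal{O}(\mathcal{H}^{k}\times\mathcal{H}^{n-k})$, which is the target by (b). Your version is cleaner and avoids the degree count, at the mild cost of using that $\mathcal{O}(\mathcal{H}^{n})$ is a polynomial ring on the $f^{s}$ (true here since $C=\mathbb{A}^1$). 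Parts (a) and (d) agree with the paper's one-line treatment.
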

\begin{proof}
a) Since $$(Gr^{\beta}(\mathcal{T}^{\alpha}))\cong 
\prod_{i\in I} (Gr^{k_i}(\mathcal{T}^{n_i})).$$\\
b) Clear.\\
c) We omit the index $i$.
The map $pr_2$ being finite flat is already proved.
Since we have the map $Gr^k(\mathcal{T})\xrightarrow[]{pr_1} \mathcal{H}^k$, where $pr_1(D',D)=D$,  we can pull back functions on $\mathcal{H}^k$ to  $Gr^k(\mathcal{T})$.
Let $\mathcal{S}_h$ be the symmetric group of order $h$.
We have $f^l(a_1,\cdots,a_k)\in k[a_1,\cdots,a_k]^{\mathcal{S}_k} \cong \mathcal{O}(\mathcal{H}^k).$
Let their pullback under $pr_1$ be
$c_1,\cdots c_k$.
We also have a map $Gr^k(\mathcal{T})\xrightarrow[]{m} \mathcal{H}^{n-k}$
where $m(D',D)=D-D'$.
Similarly, we have $f^j(a_1,\cdots,a_{n-k})\in k[a_1,\cdots,a_{n-k}]^{\mathcal{S}_{n-k}} \cong \mathcal{O}(\mathcal{H}^{n-k}).$
and let their pullback under $m$ be
$d_1,\cdots d_{n-k}$.
 For $(D',D)\in Gr(\mathcal{T})$, where $D=\sum^{n}_{t=1} p_t, D'=\sum_{t\in S} p_t$, 
We have $$\sum_{l+j=s}c_ld_j((D',D))=\sum_{l+j=s}c_l(D)d_j(D-D')\\$$
$$=\sum_{l+j=s}f^l((a_r)_{r\in S})f^j((a_{t})_{t\in \{1,\cdots,a\}\setminus S})=f^s(a_1,\cdots,a_{n}).$$ Since $f^s(a_1,\cdots, a_{n})$ is $\mathcal{S}^{n}$ invariant, it can be viewed as functions on $\mathcal{H}_{n}$ of the same form pulling back by $pr_2$.\\
By part(b), the algebra $\mathcal{O}(Gr^{k}(\mathcal{T}^{n}))$ is already generated by $c_l,d_j$.
Over $\mathcal{H}_{reg}$ 
since $pr_2$ is finite flat, the degree can be seen over $\mathcal{H}_{reg}$ and from (\ref{dec}) it is  $\tbinom{n}{k}.$   
Now, these relations exhaust all from the well-known algebraic fact that the degree of the algebra extension is $\tbinom{n}{k}.$\\
d) Clear from the definition.
\end{proof}
Restricting to the regular part (as $\mathcal{O}(C_{reg})$-module), we still denote by $\{z^{\beta}_p\}$ as a basis of the $\mathcal{O}(C_{reg})$-module $\Gamma(C_{reg}^{\alpha},(q^{\alpha})^{*}\mathcal{I}_{reg}^{\alpha,\beta}).$
%Let the transition matrix from a basis $\{z^{\beta}_p\}$
%to the local basis $\{s^S,|S|=\beta\}$ be $(u^{\beta}_{pS})$, 
%$$z^{\beta}_p=\sum_{S}T^{\beta}_{pS} s^S.$$
%We now state a result useful for computations in Appendix \ref{examples}. 
%From part (a) of theorem \ref{basis for local space},
%for brevity, here we only state the case where $I$ is a single point. 
%\begin{cor}\label{transition}
%We can choose a basis $\{y_p\}$ of $\mathcal{O}(Gr^{\beta}(\mathcal{H}^{\alpha}))$ as $\mathcal{O}(
%\mathcal{H}^{\alpha})$-module such that each $y_p$ is a product of $c_l,d_j$ hence the transition function $T_{pS}$ is the product of the image of $c_l,d_j$ of the same form, i.e 
%if $y_p=\prod (c^i_l)^{m_l}(d^i_j)^{m_j}$ 
%$$T_{pS}= (f^{l}((a^i_r)_{r\in S_i}))^{m_l} (f^{j} ((a^i_j)_{r\in \{1,\cdots,n_i\} \setminus S_i}))^{m_j}.$$
%\end{cor}
%\begin{proof}
 %Since (\ref{dec}) is a ring homomorphism, this follows from part (d) of theorem \ref{basis for local space} and lemma (\ref{module loc structrue}). 
%\end{proof}

\section{Coulomb branch}
We briefly review the mathematical definition of the Coulomb branch\cite{braverman2018towards}.

Let $G$ be a reductive group and $N$ be a representation of $G$. Let $\mathcal{K}=\mathbb{C}((t))$ and $\mathcal{O}=\mathbb{C}[[t]]$.
We denote by $\mathcal{G}(G)=G_{\mathcal{K}}/G_{\mathcal{O}}$ the loop Grassmannian of $G$. Since $G$ acts on $N$, $G_{\mathcal{O}}$ acts on $N_{\mathcal{O}}$ and we get an associated bundle $G_{\mathcal{K}}\times_{G_{\mathcal{O}}} N_{\mathcal{O}}$. It has an embedding $i$ into $\mathcal{G}(G)\times N_{\mathcal{K}}$ given by $i(\overline{(g,v)})=(\overline{g},g\cdot v)$.
Let $\mathcal{R}_{(G,N)}$ be the preimage of $\mathcal{G}(G)\times N_{\mathcal{O}}$ under $i$.
In this paper, we will call Borel-Moore homology just homology.
In \cite{braverman2018towards}, they define an
algebra structure on the $G_{\mathcal{O}}$-equivariant homology of $\mathcal{R}$ and define the Coulomb branch of the pair $(G,N)$ to be the Spec of this algebra.

\subsection{Localization}
Let $T\subset G$ be a Cartan subgroup and $\mathfrak{t}$ be its Lie algebra.
 Let $\mathcal{R}_{T,N_T}$ be the variety of triples for the pair $(T,N_T)$, where $N_T$ is the $N$ considered as a representation of $T$.
Denote the localization of $\mathfrak{t}$ at all roots of $G$ and weights of $N_T$ by
$\mathfrak{t}_{reg}.$  
Let $\mathcal{R}_{T,N_T}\xrightarrow[]{\iota} \mathcal{R}$ be the embedding, the pushforward homomorphism $\iota_*$ 
\begin{equation}
H^{T_{\mathcal{O}}}_{*}(\mathcal{R}_{T,N_T})\xrightarrow[]{\iota_*} H^{T_{\mathcal{O}}}_{*}(\mathcal{R})
\label{iota}
\end{equation}
gives an algebra homomorphism, which becomes an isomorphism over $\mathfrak{t}_{reg}$\cite[Lemma (5.17)]{braverman2018towards}.\footnote{The notation $\mathfrak{t}_{reg}$ ignores its dependence on $N$.}

\subsection{Formula for $H^{T_{\mathcal{O}}}_{*}(\mathcal{R}_{T})$}
Now we fix a representation $N$ and abbreviate $\mathcal{R}_{N_T,T}$ by $\mathcal{R}_{T}$.
 We have $\mathcal{R}_{T}\xrightarrow[]{p} \mathcal{G}(T)$ .
 Since we consider the homology, we only count the reduced part of $\mathcal{G}(T)$ so we denote the reduced part of $\mathcal{G}(T)$ (still) by $\mathcal{G}(T)$.
 Then $\mathcal{G}(T)$ is disjoint union of points $t^{\chi}$,
  where $\chi$ are cocharacters of $T$.
  The fiber of $p$ over $t^{\chi}$ is a vector space, which we denote by $V_{\chi}$. 
  So we have $\mathcal{R}_{T}=\sqcup_{\chi} V_{\chi}.$
  Denote the $T_{\mathcal{O}}$-equivariant fundamental class of $V_{\chi}$ by $r^{\chi}$.
  Let $\xi_i$ be the characters of $T$ that appear in the representation $N$. Denote by $\xi_i(\chi)$ the pairing of $\xi_i$ and $\chi$.
For two integers $k,l$, let us set
$$d(k,l)=\left\{
\begin{aligned}
   & 0 & \text{if } k \text{ and }l \text{ have the same signs.}\\
   & min(|k|,|l|) &\text{if } k \text{ and }l \text{ have different signs.} \\
\end{aligned}
\right.$$
\begin{theorem}\cite[Theorem 4.1]{braverman2018towards}\label{multi for r^A} The algebra 
$H^{T_{\mathcal{O}}}_{*}(\mathcal{R}_{T})$ is generated by $r^{\chi}$ for all $\chi\in X_*(T)$ over $H^{T_{\mathcal{O}}}_{*}(pt)$.\\
For two cocharacters $\lambda$ and $\mu$,
the multiplication of $r^{\lambda}$ and $r^{\mu}$ is 
\begin{equation}\label{mult}
r^{\lambda}r^{\mu}=\prod^{n}_{i=1} \xi_{i}^{d(\xi_i(\lambda),\xi_i(\mu))}r^{\lambda+\mu}.
\end{equation}
\end{theorem}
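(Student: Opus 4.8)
The plan is to read off the algebra $H^{T_{\mathcal{O}}}_{*}(\mathcal{R}_{T})$ directly from the stratification $\mathcal{R}_{T}=\sqcup_{\chi}V_{\chi}$, and then to obtain the multiplication formula by reducing the convolution, weight by weight, to a rank-one computation in which the coefficient appears as an equivariant Euler class.

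First I would make the fibers $V_{\chi}$ explicit. Decompose $N=\bigoplus_{i}k\cdot e_{i}$ into $T$-weight lines with weights $\xi_{i}$, so $N_{\mathcal{O}}=\bigoplus_{i}\mathcal{O}\cdot e_{i}$. A point of $\mathcal{R}_{T}$ over $t^{\chi}$ is a section $s=t^{\chi}v$ with $v,s\in N_{\mathcal{O}}$; in the $i$-th coordinate this forces $s_{i}\in\mathcal{O}\cap t^{\xi_{i}(\chi)}\mathcal{O}=t^{\max(0,\xi_{i}(\chi))}\mathcal{O}$, hence
$$V_{\chi}=\bigoplus_{i}t^{\max(0,\xi_{i}(\chi))}\mathcal{O}\cdot e_{i},$$
a $T_{\mathcal{O}}$-stable pro-affine space whose codimension in $N_{\mathcal{O}}$ in the $i$-th coordinate is $\max(0,\xi_{i}(\chi))$. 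Each $V_{\chi}$ is $T_{\mathcal{O}}$-equivariantly an affine space, so in the renormalized Borel--Moore theory of \cite{braverman2018towards} the group $H^{T_{\mathcal{O}}}_{*}(V_{\chi})$ is free of rank one over $H^{*}_{T_{\mathcal{O}}}(\mathrm{pt})$ on its fundamental class $r^{\chi}$. Since $\mathcal{R}_{T}$ is the disjoint union of the $V_{\chi}$, this already shows that $\{r^{\chi}\}_{\chi\in X_{*}(T)}$ is an $H^{*}_{T_{\mathcal{O}}}(\mathrm{pt})$-basis, giving the generation statement (as a module, a fortiori as an algebra); and because the convolution product respects the $\mathcal{G}(T)$-grading additively, the product of classes over $t^{\lambda}$ and $t^{\mu}$ is supported over $t^{\lambda+\mu}$, so $r^{\lambda}r^{\mu}=c(\lambda,\mu)\,r^{\lambda+\mu}$ for a unique $c(\lambda,\mu)\in H^{*}_{T_{\mathcal{O}}}(\mathrm{pt})$.

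Next I would reduce the determination of $c(\lambda,\mu)$ to rank one. Because $\mathcal{G}(T)$ is discrete and $N$ splits $T$-equivariantly into weight lines, both the convolution correspondence and the fibers $V_{\chi}$ factor as products, over the index $i$, of the corresponding data for the pair $(\mathbb{G}_{m},k)$ with the one-dimensional representation of weight $1$ and cocharacter pairings $k=\xi_{i}(\lambda)$, $l=\xi_{i}(\mu)$. Hence $c(\lambda,\mu)=\prod_{i}c_{i}$, and it suffices to prove that in the rank-one case the coefficient equals $\xi_{i}^{d(k,l)}$.

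The crux is the rank-one convolution. Here $V_{k}=t^{\max(0,k)}\mathcal{O}$, and I would realize $r^{k}r^{l}$ through the convolution correspondence of \cite{braverman2018towards}, computing in finite-dimensional truncations $t^{\max(0,\cdot)}\mathcal{O}/t^{M}\mathcal{O}$ and passing to the limit. The twisted product fibers over $V_{k+l}$, and $c_{i}$ is the $T$-equivariant Euler class of the excess bundle measuring the failure of the convolution to be transverse. A dimension count identifies its rank with the codimension defect
$$\max(0,k)+\max(0,l)-\max(0,k+l)=d(k,l),$$
and one checks that $T=\mathbb{G}_{m}$ acts on this excess purely through $\xi_{i}$: every loop-graded piece of the $i$-th coordinate carries the same $T$-weight $\xi_{i}$, since $t\in\mathcal{O}$ is $T$-invariant and we impose no loop-rotation equivariance. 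Therefore $c_{i}=\xi_{i}^{d(k,l)}$, and multiplying over $i$ yields $r^{\lambda}r^{\mu}=\prod_{i}\xi_{i}^{d(\xi_{i}(\lambda),\xi_{i}(\mu))}\,r^{\lambda+\mu}$. The main obstacle I anticipate is precisely this last step: pinning down the excess bundle inside the convolution geometry and verifying both that its rank is $d(k,l)$ and that the $T$-action on it is by the single weight $\xi_{i}$, in contrast to the essentially formal generation and support arguments of the earlier steps.
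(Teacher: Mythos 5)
This theorem is not proved in the paper at all --- it is quoted verbatim from \cite[Theorem 4.1]{braverman2018towards} --- so there is no internal proof to compare against; judged on its own, your argument is correct and is essentially the computation given in that reference: identify $V_\chi=\bigoplus_i t^{\max(0,\xi_i(\chi))}\mathcal{O}\cdot e_i$, observe that the $r^\chi$ form a free $H^*_{T_{\mathcal{O}}}(pt)$-basis and that convolution is graded by $\mathcal{G}(T)=X_*(T)$, reduce to one weight line by multiplicativity in $N$ (the paper's Remark \ref{mult prop of C factors}), and compute the rank-one coefficient as an equivariant Euler class of weight $\xi_i$ and rank $\max(0,k)+\max(0,l)-\max(0,k+l)=d(k,l)$. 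The only imprecision is in the last step: the coefficient is really the product of two Euler classes --- one from the refined (excess) pullback along the convolution correspondence, of rank $\max(0,k)+\max(0,l)-\max(0,k,k+l)$, and one from the pushforward $m_*$, since $V_\lambda\cap t^\lambda V_\mu$ sits in $V_{\lambda+\mu}$ with codimension $\max(0,k,k+l)-\max(0,k+l)$ --- rather than a single excess bundle; their ranks sum to your codimension defect $d(k,l)$, and since every graded piece carries the single weight $\xi_i$, your conclusion $c_i=\xi_i^{d(k,l)}$ stands.
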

\begin{rmk}\label{mult prop of C factors}
The coefficient before $r^{\lambda+\mu}$ depends on $\lambda,\mu$ and the representation $N$.
Denote the Grothendieck group of $T$ by $K^0(Rep(T))$ and the monomials of $X_{*}(T)$ by $Mon(X_{*}(T))$.
For fixed $\lambda,\mu$, 
the map $K^0(Rep(T))\xrightarrow[]{co^{\lambda,\mu}} Mon(X_{*}(T))$ given by   $co^{\lambda,\mu}(N)=\prod^{n}_{i=1} \xi_{i}^{d(\xi_i(\lambda),\xi_i(\mu))}$ is a homomorphism of monoids, i.e.
for $N=N_1\oplus N_2$ , we have
$$co^{\lambda,\mu}(N)=co^{\lambda,\mu}(N_1)co^{\lambda,\mu}(N_2).$$  
\end{rmk}

\subsection{Compactified Coulomb branch $\mathbf{M}^{\alpha}_{Q}$ of quiver gauge theory}

Here we consider a special case where the pair $(G,N)$ is given from a quiver $Q$.
Let $Q=(I,E)$ be the quiver where $I$ is the set of vertices and $E$ is the set of arrows.
Given $\alpha=\sum_{i\in I}n_i i\in \mathbb{N}[I]$, we view it as a dimension vector $\alpha=(n_i)_{i\in I}$.
Let $V_i=k^{n_i}$ and $V=(V_i)_{i\in I}$ is an $I$- graded vector space. 
Let $N=\oplus _{i\xrightarrow[]{} j} Hom(V_i,V_j)$
and $G=\prod_i GL(V_i)$ act on $N$ by conjugation.

Following \cite{braverman2018towards} 3(ii), for any vector space $U$, we define $\mathcal{G}^{+}(GL(U))\subset \mathcal{G}((GL(U))$ as the moduli space of vector bundles $\mathcal{U}$ on the formal disc $D$ with a trivialization $\sigma:\mathcal{U} _{D^{*}}\cong U\otimes \mathcal{O}_{D^{*}}$ on the punctured disc that extends through the puncture as an embedding $\sigma:\mathcal{U} _{D}\hookrightarrow U\otimes \mathcal{O}_{D^{*}}$.
Let $\mathcal{G}^{+}_{GL(V)}=\prod_{i\in I} \mathcal{G}^{+}_{GL(V_i)}$. Define $\mathcal{R}^{+}$ as the preimage of $\mathcal{G}^{+}_{GL(V)}\subset \mathcal{G}_{GL(V)}$ under $\mathcal{R}\xrightarrow[]{} \mathcal{G}_{GL(V)}$. The homology group $H^{GL(V)_{\mathcal{O}}}_{*}(\mathcal{R}^{+})$ forms a convolution subalgebra of $H^{GL(V)_{\mathcal{O}}}_{*}(\mathcal{R})$. 
%\subsubsection{Filtration and Rees algebra}

In general, given a $\mathbb{Z}^{\geq 0}$ filtration of an algebra $R$, $0\subset F_1 \subset F_2 \cdots \subset R$, we can get a $\mathbb{Z}^{\geq 0} $-graded algebra, the Rees algebra $Rees_F(R):=\oplus_i F_it^i$, where $t$ is a formal variable. Then $Proj(Rees_F(R))$ is a compactification of $Spec(R)$.
When the filtration $F$ is fixed, we often omit $F$ from the notation $Rees_F(R)$.

Now we give the algebra\footnote{Since we fix the dimension vector $\alpha$, we omit $\alpha$ for the notation $\mathcal{A}$.} $\mathcal{A}\stackrel{\mathrm{def}}{=}H_{*}^{GL(V)_{\mathcal{O}}}(\mathcal{R}^{+})$ a $\mathbb{Z}_{\geq 0}$ filtration, which is the pullback of the filtration in remark 3.7 \cite{braverman2018towards} under the diagonal embedding of $\mathbb{Z}_{\geq 0}\xrightarrow[]{}\mathbb{Z}_{\geq 0}^{I}$, as follows.
Recall $n_i=dim V_i$.
The $GL(V)_{\mathcal{O}}$-orbits in $Gr^+_{GL(V)}$ are numbered by $I$-colored partitions $(\lambda^{(i)})_{i\in I}$, $\lambda^{(i)}=(\lambda^{(i)}_1\geq \lambda^{(i)}_2\geq \cdots ),  $ such that the number of parts $l(\lambda^{(i)})\leq n_i$.
Given a dimension vector $(m_i)_{i\in I}$, we define a closed $GL(V)_{\mathcal{O}}$-invariant subvariety $\overline{Gr_{GL(V)}}^{+,(m_i)}\subset 
Gr^+_{GL(V)}$ as the union of orbits $Gr^{(\lambda^{(i)})_{i\in I}}_{GL(V)}$ such that $\lambda^{(i)}_1\leq m_i$ for any $i\in I$.
For $m\in \mathbb{N}$, we define $\mathcal{R}^{+}_{\leq m}\subset \mathcal{R}^{+}$ as the preimage of $\overline{Gr}_{GL(V)}^{+,(m,\cdots,m)}$
under $\mathcal{R}^{+}\xrightarrow[]{} Gr^{+}_{GL(V)}$. This gives a $\mathbb{Z}^{\geq 0}$ filtration $F_m=H_{*}^{GL(V)_{\mathcal{O}}}(\mathcal{R}^{+}_{\leq m})$ of the algebra $\mathcal{A}.$

Define $\mathbf{M}_{Q}^{\alpha}\stackrel{\text{def}}{=}Proj (Rees(\mathcal{A}))$ and call $\mathbf{M}_{Q}^{\alpha}$ the compactified Coulomb branch.

\subsection{Embedding of $\mathbf{M}_{Q}^{\alpha}$ into $\mathbb{P}_{\mathcal{H}}(H^{G_{\mathcal{O}}}(\mathcal{R}^{+}_{\leq 1})^{*})$}
We fix an ordered basis of $V.$ Let $T_i$ be corresponding diagonal subgroup of $GL(V_i)$ and $\mathfrak{t_i}$ be its Lie algebra. Let $W_i$ be the Weyl group of $GL(V_i).$ 
Denote by $(a^i_j)_{1\leq j \leq n_i}$ the corresponding standard basis of $\mathfrak{t_i}$. Notice that in the last section, we already denote certain generators of $\mathcal{O}(C^{\alpha})$ by $a^i_j$. 
Here we identify 
$H^{*}_{G_{\mathcal{O}}}(pt)\hookrightarrow H^{*}_{T_{\mathcal{O}}}(pt)$
with $\mathcal{O}(\mathcal{H}^{\alpha})\hookrightarrow \mathcal{O}(C^{\alpha})$ via $Sym((\mathfrak{t}/W)^{*})\hookrightarrow Sym(\mathfrak{t}^{*})$.
The algebra $\oplus_m H^{G_{\mathcal{O}}}(R^{+}_{\leq m})t^m$ is a module over $H^{*}_{G_{\mathcal{O}}}(pt)\cong H^{*}_{G}(pt)$ so we have
a projection
\begin{equation}\label{projection}
    \mathbf{M}_{Q}^{\alpha}\xrightarrow[]{} \mathcal{H}^{\alpha}.
\end{equation}

\begin{lemma}\label{generate}
As a $H^{*}_{G_{\mathcal{O}}}(pt)$-algebra, the Rees algebra
$\oplus_m H_{*}^{G_{\mathcal{O}}}(\mathcal{R}^{+}_{\leq m})t^m$ is generated by $ H_{*}^{G_{\mathcal{O}}}(\mathcal{R}^{+}_{\leq 1})t$.
\end{lemma}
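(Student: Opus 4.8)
The plan is to show that the Rees algebra $\bigoplus_m F_m t^m$, with $F_m=H_*^{G_{\mathcal{O}}}(\mathcal{R}^{+}_{\leq m})$ and $F_0=H^{*}_{G_{\mathcal{O}}}(pt)$, is generated over $F_0$ by its degree-one piece $F_1 t$. Since the filtration is multiplicative, it suffices to prove $F_1\cdot F_m=F_{m+1}$ for every $m\ge 1$, i.e.\ that $m$-fold products of classes from $F_1$ span $F_m$ as an $F_0$-module. I would organize the argument around the fundamental classes $[\mathcal{R}^{+}_{\lambda}]$ of the $G_{\mathcal{O}}$-orbit closures in $\mathcal{R}^{+}$, indexed by $I$-colored dominant nonnegative coweights $\lambda=(\lambda^{(i)})_{i\in I}$ (equivalently $I$-colored partitions). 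These give a basis of $H_*^{G_{\mathcal{O}}}(\mathcal{R}^{+})$ over $F_0$, with $F_m$ the span of those $\lambda$ satisfying $\lambda^{(i)}_1\le m$; in particular $F_1$ is spanned by the column coweights, i.e.\ the fundamental coweights $\omega^{(i)}_{k}$ of the factors $GL(V_i)$. A Young-diagram bookkeeping shows every $\lambda$ with $\lambda^{(i)}_1\le m$ is a sum $\psi_1+\cdots+\psi_m$ of $m$ column coweights (fill slots $k=1,\dots,m$ left to right, placing in slot $k$ and color $i$ the fundamental coweight whose height is that of the $k$-th column of $\lambda^{(i)}$).

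The engine of the proof is the abelian multiplication formula. Using the localization map \eqref{iota}, an algebra isomorphism over $\mathfrak{t}_{reg}$, I would pass to the torus $T=\prod_i T_i$ and compute in $H_*^{T_{\mathcal{O}}}(\mathcal{R}_T^{+})$, whose classes $r^{\chi}$ (indexed by nonnegative cocharacters $\chi=(\chi^{i}_{j})$, with $F_m$ corresponding to $0\le\chi^{i}_{j}\le m$) multiply by \eqref{mult}. For a fixed $\chi$ with $\chi^{i}_{j}\le m$, take the bit decomposition $\chi=\psi_1+\cdots+\psi_m$, where $\psi_k$ has $(i,j)$-entry $1$ if $\chi^{i}_{j}\ge k$ and $0$ otherwise. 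The key computation is that every coefficient in \eqref{mult} is trivial for this decomposition: for each weight $\xi=a^{j}_{s}-a^{i}_{r}$ of $N$ coming from an arrow $i\to j$, writing $A=\chi^{j}_{s}$ and $B=\chi^{i}_{r}$, the partial sum $\xi(\psi_1+\cdots+\psi_{k-1})=\min(A,k-1)-\min(B,k-1)$ and the increment $\xi(\psi_k)$ always have the same sign, namely that of $A-B$, so $d(\cdot,\cdot)=0$ at every step. Hence $r^{\psi_1}\cdots r^{\psi_m}=r^{\chi}$ with coefficient exactly $1$, and the torus Rees algebra is generated in degree one with no correction terms.

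To descend to $G$, I would combine this with the triangularity of the convolution product in the fundamental-class basis. The restriction to the torus is a filtered ring map under which $[\mathcal{R}^{+}_{\lambda}]$ goes to $r^{\lambda}$ plus a combination of $r^{\chi}$ with $\chi$ strictly lower in the dominance order. Multiplying the column classes $[\mathcal{R}^{+}_{\psi_1}]\cdots[\mathcal{R}^{+}_{\psi_\ell}]$ with $\ell=\max_i\lambda^{(i)}_1\le m$ lands in $F_\ell\subseteq F_m$; by the abelian computation its restriction has $r^{\lambda}$-coefficient $1$, with only the all-dominant choice of torus representatives contributing to that top term, and all remaining terms strictly below $\lambda$. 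Thus it equals $[\mathcal{R}^{+}_{\lambda}]+\sum_{\nu<\lambda}c_\nu[\mathcal{R}^{+}_{\nu}]$ with $c_\nu\in F_0$, and since $\nu<\lambda$ forces $\nu^{(i)}_1\le\lambda^{(i)}_1\le m$ all $\nu$ again lie in $F_m$. A descending induction over the dominance order then places every $[\mathcal{R}^{+}_{\lambda}]$ with $\lambda^{(i)}_1\le m$ in the subalgebra generated by $F_1$, which gives $F_m=F_1\cdots F_1$ ($m$ factors) and the lemma.

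The main obstacle is this last, integral descent step: although \eqref{mult} makes the leading coefficient manifestly equal to $1$ over $\mathfrak{t}_{reg}$, one must verify that it remains a unit after returning from the localized torus picture to the non-localized $G$-equivariant homology. Concretely, this requires controlling the equivariant multiplicities through which the classes $[\mathcal{R}^{+}_{\psi_k}]$ restrict to the $T$-fixed points and checking that only the all-dominant choice of representatives contributes to the coefficient of $r^{\lambda}$, so that no weight denominators survive in the top term. I expect the supporting facts — freeness of $H_*^{G_{\mathcal{O}}}(\mathcal{R}^{+})$ on the fundamental-class basis and the strict dominance of the lower terms — to be routine given Theorem \ref{multi for r^A} and remark \ref{mult prop of C factors}.
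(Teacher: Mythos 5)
Your proposal is correct and follows essentially the same route as the paper: both reduce, via the Mayer--Vietoris splitting of $H_*^{G_{\mathcal{O}}}(\mathcal{R}_{\leq\lambda})$ into $H_*^{G_{\mathcal{O}}}(\mathcal{R}_{<\lambda})\oplus H_*^{G_{\mathcal{O}}}(\mathcal{R}_{\lambda})$ and induction on the dominance order, to writing a coweight $\lambda$ with $\lambda^{(i)}_1\leq m$ as a sum of $m$ fundamental coweights and showing that the product of the corresponding degree-one fundamental classes equals $[\mathcal{R}_{\lambda}]$ modulo $H_*^{G_{\mathcal{O}}}(\mathcal{R}_{<\lambda})$. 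The only real difference is that where you verify the leading coefficient is exactly $1$ by the explicit torus localization computation with the bit decomposition and formula (\ref{mult}), the paper instead cites the identity $[\mathcal{R}_{\lambda}][\mathcal{R}_{\mu}]=[\mathcal{R}_{\lambda+\mu}]$ in $gr\mathcal{A}$ for $\lambda,\mu$ in the same generalized Weyl chamber from \cite{braverman2019coulomb}, so you are in effect inlining the proof of the cited step.
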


\begin{proof}
We recall some notations in \cite{braverman2019coulomb}.
Let $\mathcal{R}_{\lambda} = \mathcal{R}\cap \pi^{-1}(Gr^{\lambda})$ ($\mathcal{R}_{\leq \lambda}= \mathcal{R}\cap \pi^{-1}(\overline{Gr^{\lambda})}$) be the restriction of $\mathcal{R}$ on the $G_\mathcal{O}$-orbit $Gr^{\lambda}$ ($G_\mathcal{O}$-orbit closure $\overline{Gr^{\lambda}}$, resp).
 Let $\mathcal{R}_{< \lambda}$ be the complement $\mathcal{R}_{\leq \lambda}\setminus \mathcal{R}_{\lambda}$.
It is a closed subvariety.
Lemma 6.2 in \cite{braverman2018towards}
says that the Mayer-Vietoris
sequence splits into short exact sequences
$$0\xrightarrow[]{} H^{G_{\mathcal{O}}}_{*}(\mathcal{R}_{< \lambda}) \xrightarrow[]{} H^{G_{\mathcal{O}}}_{*}(\mathcal{R}_{\leq \lambda}) \xrightarrow[]{} H^{G_{\mathcal{O}}}_{*}(\mathcal{R}_{\lambda})\xrightarrow[]{} 0.$$
Moreover, as $H^{*}_{G_{\mathcal{O}}}(pt)$-module, this exact sequence splits canonically so we have 
\begin{equation}
H^{G_{\mathcal{O}}}_{*}(\mathcal{R}_{\leq \lambda})=H^{G_{\mathcal{O}}}_{*}(\mathcal{R}_{< \lambda})\oplus H^{G_{\mathcal{O}}}_{*}(\mathcal{R}_{\lambda}).\label{sum}
\end{equation}
It suffices to show that any $a\in H^{G_{\mathcal{O}}}(\mathcal{R}^{+}_{\leq m})t^m$ is generated by $H^{G_{\mathcal{O}}}(\mathcal{R}^{+}_{\leq 1})t$.
%%We prove by induction on $m$.
Let $\Lambda(m)$ be the set of all maximal $\lambda$ such that $\lambda^{(i)}_1\leq m$.
By definition of $\mathcal{R}^{+}_{\leq m}$ and $\overline{Gr_{\lambda}}=\cup_{\mu:\mu\leq \lambda}Gr_{\mu}$,
we have  $\mathcal{R}^{+}_{\leq m}=\sqcup_{ \lambda\in \Lambda(m) }\mathcal{R}^{+}_{\leq \lambda}$.
It suffices to prove the claim for any $a\in H^{G_{\mathcal{O}}}(\mathcal{R}^{+}_{\leq \lambda})t^m$ for $\lambda \in \Lambda(m)$.
We prove this by induction on $\lambda$. Suppose for any $\mu< \lambda$, the theorem holds.
By formula (\ref{sum}) and induction hypothesis, it suffices to show when $a\in H^{G_{\mathcal{O}}}_{*}(\mathcal{R}_{\lambda})$ for some $\lambda$.
The second paragraph after proposition 6.1 \cite{braverman2019coulomb} says
$$H_{*}^{G_{\mathcal{O}}}(R_{\lambda})\cong H^{*}_{Stab_{G}(\lambda)}(pt) \cap [\mathcal{R}_{\lambda}]\cong  \mathbb{C}[\mathbf{t}]^{W_{\lambda}}
[\mathcal{R}_{\lambda}],$$
where $\cap$ is the cap product, $W_{\lambda}$ is the stabilizer of $\lambda$ in the Weyl group $W$ and $[\mathcal{R}_{\lambda}]$ is the $G_{\mathcal{O}}$-equivariant fundamental class of $\mathcal{R}_{\lambda}$.
So it suffices to prove the case $a=[\mathcal{R}_{\lambda}]t^m.$
Recall we denote $H^{GL(V)_{\mathcal{O}}}_{*}(\mathcal{R}^+)$ by $\mathcal{A}$.
In the proof of \cite{braverman2019coulomb} proposition 6.8, regarding $[\mathcal{R}_{\lambda}]$ as an element in $gr\mathcal{A}$,
we have 
 $$[\mathcal{R}_{\lambda}][\mathcal{R}_{\mu}] = [\mathcal{R}_{\lambda+\mu}]$$ in $gr\mathcal{A}$ when $\lambda, \mu$ are in the same
"generalized Weyl chamber". In this case, any weight of $N$ is a root of $G$, so their "generalized Weyl chamber" is the same as the usual Weyl chamber.
Let $\varpi_{\beta}$ be the fundamental weights of $GL(V)$. Here, the fundamental weights of $GL_n$ consist of all $\varpi_{k}=(1,1\cdots,1,0,0\cdots,0)$ for $1\leq k\leq n$ where $k$ is the number of $1$'s. The fundamental weights of the product of $GL(V_i)$'s consist of the disjoint union of fundamental weights of $GL(V_i)$'s. 
Since $\mathcal{R}_{\beta}\subset \mathcal{R}_{\leq 1}$, we have $H^{G_{\mathcal{O}}}(\mathcal{R}_{\beta})t\subset Rees\mathcal{A}.$
We can assume $max_i \lambda^{(i)}_1=m$.
It is easy to see that we can choose $m$
fundamental weights $\beta_i$ such that $\sum_i \beta_i=\lambda$.
So in $\mathcal{A}$, we have $[\mathcal{R}_{\beta_1}][\mathcal{R}_{\beta_2}]\cdots [\mathcal{R}_{\beta_m}]-[\mathcal{R}_{\lambda}]\in H^{G_{\mathcal{O}}}(\mathcal{R}_{<\lambda})$. In $Rees\mathcal{A}$, we have 
$[\mathcal{R}_{\beta_1}]t[\mathcal{R}_{\beta_2}]t\cdots [\mathcal{R}_{\beta_m}]t-[\mathcal{R}_{\lambda}]t^m\in H_{*}^{G_{\mathcal{O}}}(\mathcal{R}_{<\lambda})t^m$, which is generated by $H_{*}^{G_{\mathcal{O}}}(\mathcal{R}_{\leq 1})$ by induction. Hence we claim $[\mathcal{R}_{\lambda}]t^m$ is generated by $H_{*}^{G_{\mathcal{O}}}(\mathcal{R}_{\leq 1})$.

\end{proof}

\begin{cor} \label{embed M}We have an embedding
$$\mathbf{M}^{\alpha}\xhookrightarrow[]{j_\alpha} 
 \mathbb{P}_{\mathcal{H}^{\alpha}}(H_{*}^{G}(\mathcal{R}^{+}_{\leq 1})^{*}),$$ 
 where $H_{*}^{G}(\mathcal{R}^{+}_{\leq 1})^{*}$ is understood as a vector bundle over $Spec(H^{*}_{G}(pt))\cong \mathcal{H}^{\alpha}.$
\end{cor}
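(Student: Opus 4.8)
The plan is to deduce the corollary formally from Lemma \ref{generate} together with the standard behaviour of $Proj$ under a surjection of graded algebras, so that almost no new computation is required. Recall that $\mathbf{M}^{\alpha}=Proj(Rees(\mathcal{A}))$, where $Rees(\mathcal{A})=\oplus_m H_{*}^{G_{\mathcal{O}}}(\mathcal{R}^{+}_{\leq m})t^m$ is graded by the single integer $m$ coming from the diagonal filtration, with degree-$0$ piece $R_0:=H^{*}_{G_{\mathcal{O}}}(pt)\cong \mathcal{O}(\mathcal{H}^{\alpha})$ and degree-$1$ piece $R_1:=H_{*}^{G_{\mathcal{O}}}(\mathcal{R}^{+}_{\leq 1})t$ (here I freely use $H^{*}_{G_{\mathcal{O}}}\cong H^{*}_{G}$, so that $R_1$ is the bundle $H_{*}^{G}(\mathcal{R}^{+}_{\leq 1})$ appearing in the statement).

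First I would observe that Lemma \ref{generate} is precisely the assertion that $Rees(\mathcal{A})$ is generated as an $R_0$-algebra by its degree-$1$ part $R_1$. Hence the canonical map of graded $R_0$-algebras from the symmetric algebra is surjective in every degree,
$$Sym_{R_0}(R_1)\twoheadrightarrow Rees(\mathcal{A}).$$
Applying $Proj$ to this degreewise-surjective graded homomorphism (whose source and target share the degree-$0$ ring $R_0$) yields a closed immersion defined on all of $\mathbf{M}^{\alpha}$,
$$\mathbf{M}^{\alpha}=Proj(Rees(\mathcal{A}))\xhookrightarrow{\ j_{\alpha}\ } Proj(Sym_{R_0}(R_1))=\mathbb{P}_{\mathcal{H}^{\alpha}}(R_1^{*}),$$
where the last identification uses the convention $\mathbb{P}(W)=Proj(Sym(W^{*}))$ employed throughout the paper. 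Unwinding $R_1=H_{*}^{G}(\mathcal{R}^{+}_{\leq 1})$ over $R_0\cong\mathcal{O}(\mathcal{H}^{\alpha})$ then gives exactly the desired embedding into $\mathbb{P}_{\mathcal{H}^{\alpha}}(H_{*}^{G}(\mathcal{R}^{+}_{\leq 1})^{*})$.

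The only point needing care, and the step I expect to be the mild obstacle, is the legitimacy of the target as an honest projective bundle: $\mathbb{P}_{\mathcal{H}^{\alpha}}(R_1^{*})$ and the identity $Proj(Sym_{R_0}(R_1))=\mathbb{P}_{\mathcal{H}^{\alpha}}(R_1^{*})$ both require $R_1$ to be a finitely generated locally free (projective) $R_0$-module, and in particular $(R_1^{*})^{*}=R_1$. I would supply this from the canonical $H^{*}_{G_{\mathcal{O}}}(pt)$-module splitting (\ref{sum}) obtained in the proof of Lemma \ref{generate}, which exhibits $H_{*}^{G_{\mathcal{O}}}(\mathcal{R}^{+}_{\leq 1})$ as a finite direct sum of pieces $H_{*}^{G_{\mathcal{O}}}(\mathcal{R}_{\lambda})\cong \mathbb{C}[\mathbf{t}]^{W_{\lambda}}[\mathcal{R}_{\lambda}]$, each free over $R_0=\mathbb{C}[\mathbf{t}]^{W}$ since $W_{\lambda}\subseteq W$. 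Granting local freeness, the surjectivity-implies-closed-immersion step is entirely formal, and the corollary follows.
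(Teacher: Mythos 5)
Your proposal is correct and follows essentially the same route as the paper: Lemma \ref{generate} gives the surjection $\mathcal{S}(H_{*}^{G_{\mathcal{O}}}(\mathcal{R}^{+}_{\leq 1})t)\twoheadrightarrow \oplus_m H_{*}^{G_{\mathcal{O}}}(\mathcal{R}^{+}_{\leq m})t^m$, and applying $Proj$ yields the closed immersion. Your additional verification that $H_{*}^{G}(\mathcal{R}^{+}_{\leq 1})$ is finitely generated and free over $H^{*}_{G}(pt)$ (so that the target is a genuine projective bundle) is a point the paper leaves implicit, handled there by Lemma \ref{bundle over grass} and the module discussion in section 3.5, but it does not change the argument.
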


\begin{proof}
By lemma \ref{generate}, we have the surjection of graded algebras
$$\mathcal{S}(H_{*}^{G_{\mathcal{O}}}(\mathcal{R}^{+}_{\leq 1})t)\twoheadrightarrow \oplus_m H_{*}^{G_{\mathcal{O}}}(\mathcal{R}^{+}_{\leq m})t^m.$$
Taking Proj, we get the embedding.
\end{proof}
\subsection{Bases of $H^{G_{\mathcal{O}}}_{*}(\mathcal{R}^{+}_{\leq 1})$ as $H_{G_{\mathcal{O}}}^{*}(pt)\cong \mathcal{O}(\mathcal{H}^{\alpha})$-module}\label{coloumb global basis}
\begin{lemma}\label{bundle over grass}

\begin{equation}\label{R1}
H^{G_{\mathcal{O}}}_{*}(\mathcal{R}^{+}_{\leq 1})\cong \bigoplus_{(k_i)_{i\in I}} \bigotimes_{i\in I} H^{GL(V_i)}_{*}(Gr(n_i,k_i)). 
\end{equation}

\end{lemma}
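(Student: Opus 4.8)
The plan is to reduce everything to the minuscule geometry of the base $\overline{Gr}^{+,(1,\dots,1)}_{GL(V)}$, split $\mathcal{R}^{+}_{\leq 1}$ into closed pieces, and then identify the homology of each piece by a Thom isomorphism together with the K\"unneth formula.

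First I would analyze the base. By the description of orbits, the $GL(V)_{\mathcal{O}}$-orbits contained in $\overline{Gr}^{+,(1,\dots,1)}_{GL(V)}$ are exactly the colored coweights $(\lambda^{(i)})_{i\in I}$ with $\lambda^{(i)}_1\le 1$, i.e. $\lambda^{(i)}=\varpi_{k_i}$ with $0\le k_i\le n_i$. Each $\varpi_{k_i}$ is minuscule, so the orbit $Gr^{\varpi_{k_i}}_{GL(V_i)}$ is already closed and is isomorphic to $GL(V_i)/P_{k_i}=Gr(n_i,k_i)$, where $P_{k_i}$ is the maximal parabolic of Levi type $GL_{k_i}\times GL_{n_i-k_i}$. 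Consequently
\begin{equation}
\overline{Gr}^{+,(1,\dots,1)}_{GL(V)}=\bigsqcup_{(k_i)_{i\in I}}\ \prod_{i\in I}Gr(n_i,k_i),
\end{equation}
a disjoint union of smooth projective pieces indexed by $(k_i)_{i\in I}$.

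Next I would pass to $\mathcal{R}^{+}_{\le 1}$. Since each product $\prod_i Gr(n_i,k_i)$ is open and closed in the base, its preimage $\mathcal{R}_{(\varpi_{k_i})}:=\mathcal{R}^{+}\cap\pi^{-1}(\prod_i Gr(n_i,k_i))$ is open and closed in $\mathcal{R}^{+}_{\le 1}$; hence Borel-Moore homology splits as a direct sum and produces the outer $\bigoplus_{(k_i)}$ with no Mayer-Vietoris extension problem (in contrast with the proof of Lemma \ref{generate}, here the strata are already closed). On each piece, the projection $\mathcal{R}_{(\varpi_{k_i})}\to\prod_i Gr(n_i,k_i)$ is a (pro-finite-rank) affine bundle by the structural results of \cite{braverman2018towards}, so the Thom isomorphism gives $H^{G_{\mathcal{O}}}_{*}(\mathcal{R}_{(\varpi_{k_i})})\cong H^{G_{\mathcal{O}}}_{*}(\prod_i Gr(n_i,k_i))$ up to a degree shift by the rank; the coupling of the vertices through $N$ affects only this shift, not the module structure, and we may normalize it away. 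Because the $G_{\mathcal{O}}$-action on the finite-dimensional base factors through the evaluation $G_{\mathcal{O}}\to G$ with pro-unipotent kernel, this equals $H^{G}_{*}(\prod_i Gr(n_i,k_i))$.

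Finally, $G=\prod_i GL(V_i)$ acts factorwise and each $Gr(n_i,k_i)$ is equivariantly formal, so equivariant homology is free over $H^{*}_{G}(pt)$ and the K\"unneth formula yields $H^{G}_{*}(\prod_i Gr(n_i,k_i))\cong\bigotimes_{i\in I}H^{GL(V_i)}_{*}(Gr(n_i,k_i))$, which assembles into the claim. As an independent check one can bypass the bundle argument using the formula $H^{G_{\mathcal{O}}}_{*}(\mathcal{R}_{\lambda})\cong\mathbb{C}[\mathbf{t}]^{W_{\lambda}}[\mathcal{R}_{\lambda}]$ quoted from \cite{braverman2019coulomb}: for $\lambda=(\varpi_{k_i})_{i\in I}$ one has $W_{\lambda}=\prod_i(S_{k_i}\times S_{n_i-k_i})$, whence $\mathbb{C}[\mathbf{t}]^{W_{\lambda}}=\bigotimes_i\mathbb{C}[\mathbf{t}_i]^{S_{k_i}\times S_{n_i-k_i}}\cong\bigotimes_i H^{GL(V_i)}_{*}(Gr(n_i,k_i))$, matching the right-hand side as $H^{*}_{G}(pt)$-modules. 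The only genuinely technical point is the affine-bundle structure of $\mathcal{R}_{(\varpi_{k_i})}$ over the minuscule base in the ind-scheme setting, which legitimizes the Thom isomorphism and the degree normalization; this is exactly the input I would cite from \cite{braverman2018towards} rather than reprove.
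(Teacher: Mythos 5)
Your proposal is correct and follows essentially the same route as the paper: decompose $\mathcal{R}^{+}_{\leq 1}$ into the pieces $\mathcal{R}_{\varpi_{(k_i)}}$ indexed by $(k_i)_{i\in I}$ and identify each piece as a vector bundle over the minuscule orbit $\prod_i Gr(n_i,k_i)$, whence the homologies agree. The paper states this in two lines; you supply the justifications it leaves implicit (closedness of the minuscule orbits, the Thom isomorphism, reduction from $G_{\mathcal{O}}$ to $G$, and K\"unneth), all of which are consistent with the intended argument.
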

\begin{proof}    
Let $\{\varpi^i_{k_i},1\leq k_i \leq n_i\}$ be all fundamental weights of $GL(V_i)$, where $\varpi^i_{k_i}=(1,1\cdots,1,0,0\cdots,0)$, $k_i$=number of $1$'s. Now we allow $k_i=0$, denote $\varpi^i_{0}=(0,\cdots,0)$ and let $\varpi_{(k_i)_{i\in I}}\stackrel{def}{=}(\varpi^i_{k_i})_{i\in I}$.
Recall $V=\oplus_{i\in I}V_i$ with dimension vector $\alpha=(n_i)_{i\in I}.$
For the set $\{\varpi_{(k_i)_{i\in I}},0\leq k_i \leq n_i,i\in I\}$,
 we have
$$\mathcal{R}^{+}_{\leq 1}=\bigsqcup_{0\leq k_i \leq n_i,i\in I}
\mathcal{R}_{\varpi_{(k_i)_{i\in I}}}.$$
For each component of $\mathcal{R}^{+}_{\leq 1}$, $\mathcal{R}_{\varpi_{(k_i)_{i\in I}}}$ is a vector bundle over the $GL(V)_{\mathcal{O}}$-orbit $\mathcal{G}_{\varpi_{(k_i)_{i\in I}}}=\prod_i Gr(n_i,k_i)$ so their homologies are isomorphic (without grading).
\end{proof}
Now we fix a dimension vector  $\beta=(k_i)_{i\in I}$.
By Poincare duality, $H_{G}^{*}(\mathcal{R}_{\varpi_{\beta}})\xrightarrow[\sim]{p}H^{G}_{*}(\mathcal{R}_{\varpi_{\beta}})$, where\footnote{again we ignore the grading} $p(c)=c\cap [\mathcal{R}_{\varpi_{\beta}}]^{G} $
and $[\mathcal{R}_{\varpi_{\beta}}]^{G}$ is the $G$-equivariant fundamental class of $\mathcal{R}_{\varpi_{\beta}}$. 
The homology group $H^{G}_{*}(\mathcal{R}_{\varpi_{\beta}})$  is a free rank 1 module over the ring $H_{G}^{*}(\prod_{i\in I} Gr(n_i,k_i))\cong \otimes _{i\in I} H_{GL(V_i)}^{*}Gr(n_i,k_i).$ 
For the cohomology of Grassmannian, we have a well-known result. Since the cohomology of the product of Grassmannians is just the tensor product of cohomologies of Grassmannians, for brevity, in lemma \ref{coh of grass}, we set $I=\{1\}$ so $GL(V)=GL_n$ and we abbreviate $a^1_{j}$ by $a_{j}$ for $1\leq j\leq n.$
Let $S$ be the tautological bundle over $Gr(n,k)$ and $Q$ be the quotient bundle. Denote $c_i$ the $i$-th $G$-equivariant Chern class and $c$ the total Chern class. Recall that the notation $f^s$ is the elementary symmetric functions, introduced before theorem \ref{basis for local space}.  
\begin{lemma}\label{coh of grass}
$$H_{GL(V)}^{*}Gr(n,k)\cong H_{GL(V)}^{*}(pt)[c_l(S),c_j(Q)]/\mathrm{I},$$
where the ideal $\mathrm{I}$ is generated by 
$$\sum_{1\leq l \leq k,l+j=s}c_l(S)c_j(Q)-f^{s}(a_1,\cdots,a_{n}) $$ for all $s$ such that ${1\leq s\leq n}.$
\end{lemma}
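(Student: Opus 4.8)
The plan is to realize the equivariant cohomology as the ordinary cohomology of a Grassmann bundle and then apply the Leray--Hirsch (Grassmann bundle) theorem. Via the Borel construction, $H^{*}_{GL(V)}(Gr(n,k))$ is the cohomology of $Gr(n,k)\times_{GL(V)}EGL(V)$, which is the relative Grassmannian $Gr_k(\mathcal{V})$ of $k$-planes in the tautological rank $n$ bundle $\mathcal{V}$ over $BGL(V)$, with projection $\pi$. On this space one has the tautological short exact sequence $0\to S\to \pi^{*}\mathcal{V}\to Q\to 0$, where $S$ is the tautological subbundle of rank $k$ and $Q$ the quotient bundle of rank $n-k$. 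Since the equivariant Chern roots of $\mathcal{V}$ are exactly $a_1,\dots,a_n$, the Whitney sum formula gives $c(S)c(Q)=\pi^{*}c(\mathcal{V})=\prod_{j=1}^{n}(1+a_j)$, and extracting the degree $s$ component yields precisely $\sum_{l+j=s}c_l(S)c_j(Q)=f^{s}(a_1,\dots,a_n)$. Hence every generator of $\mathrm{I}$ maps to zero, and there is a well-defined ring homomorphism $\phi\colon H^{*}_{GL(V)}(pt)[c_l(S),c_j(Q)]/\mathrm{I}\to H^{*}_{GL(V)}(Gr(n,k))$.

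Next I would check that $\phi$ is surjective. This is the content of the Grassmann bundle theorem: $H^{*}(Gr_k(\mathcal{V}))$ is generated over $H^{*}(BGL(V))=H^{*}_{GL(V)}(pt)$ by the Chern classes of $S$ and $Q$, and is in fact a free $H^{*}_{GL(V)}(pt)$-module with a Schubert-type basis of rank $\binom{n}{k}$, matching the number of $T$-fixed points (the coordinate $k$-planes). In particular the target of $\phi$ is a free $H^{*}_{GL(V)}(pt)$-module of rank $\binom{n}{k}$, and $\phi$ is surjective.

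Finally I would prove that $\phi$ is injective by a rank count that mirrors part (c) of Theorem \ref{basis for local space}. The relations in $\mathrm{I}$ express the symmetric functions $f^{1},\dots,f^{n}$ of the $a_j$ in terms of the generators $c_l(S),c_j(Q)$, and the identical algebraic input used there shows that $H^{*}_{GL(V)}(pt)[c_l(S),c_j(Q)]/\mathrm{I}$ is a finite free module over $H^{*}_{GL(V)}(pt)$ of rank equal to the degree of the associated finite flat map, namely $\binom{n}{k}$. Since $\phi$ is then a surjection between two free $H^{*}_{GL(V)}(pt)$-modules of the same finite rank over the domain $H^{*}_{GL(V)}(pt)$, it is automatically an isomorphism (a surjective endomorphism of a finitely generated module over a commutative ring is injective). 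I expect the main obstacle to be exactly this completeness-of-relations step: one must confirm that the two modules have \emph{equal} rank, not merely that the listed relations hold, and the cleanest route is the finite-flat degree computation $\binom{n}{k}$, which is the very same count that appears in Theorem \ref{basis for local space}.
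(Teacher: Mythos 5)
Your proposal is correct and follows essentially the same route as the paper: both derive the relations from the Whitney sum formula applied to the tautological sequence $0\to S\to V\to Q\to 0$ and invoke the well-known fact that the classes $c_{l}(S),c_{j}(Q)$ generate the equivariant cohomology over $H^{*}_{GL(V)}(pt)$. The one substantive addition in your write-up is the explicit check that $\mathrm{I}$ exhausts all relations, via the rank count $\binom{n}{k}$ on both sides; the paper's proof of this lemma leaves that completeness step implicit (the analogous degree computation appears only in its Theorem \ref{basis for local space}(c)), so your extra care there supplements rather than diverges from the argument.
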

\begin{proof}

 We have an exact sequence $0\xrightarrow[]{} S\xrightarrow[]{} V \xrightarrow[]{} Q \xrightarrow[]{} 0$. It is well-known that 
as an $H_{GL(V)}^{*}(pt)$-algebra, $H^*_{T}(Gr(n,k)$ has generators $c_1(S),\cdots,c_k(S),c_1(Q),\cdots  c_{n-k}(Q)$.
From the exact sequence, we have $c(S)c(Q)=c(V)$.
$c(V)=\prod_{\chi} c(V_{\chi})=\prod_{\chi}(1+\chi)=\prod_i(1+a_i)$, where $\chi$ are characters of $T$ and $V_{\chi}$ is the  $\chi$-weight space of $V$. Plug in $c=\sum_i c^i$ into $S,Q$ and expand $c(S)c(Q)=c(V)$. Comparing the degree $s$ part, we get the relation in the lemma.

\end{proof}
\begin{rmk}\label{coh of grass T}
By the same argument, we get the isomorphism for $T$-equivariant cohomology.
$$H_{T}^{*}(Gr(n,k))\cong H_{T}^{*}(pt)[c_l^{T}(S),c_j^{T}(Q)]/I,$$
for the ideal $I$ with the same generators from the lemma.
\end{rmk}

\subsection{Pulling back $\mathbf{M}^{\alpha}$ under $C^{\alpha}\xrightarrow[]{q^{\alpha}} \mathcal{H}^{\alpha}$}
To apply localization theory in (3.1), we consider $H^{T_{\mathcal{O}}}_{*}(\mathcal{R})$ which is an algebra over $H_{*}^{T_{\mathcal{O}}}(pt)$.
We identify $Spec(H_{*}^{T_{\mathcal{O}}}(pt))$ with $C^{\alpha}$.
\begin{lemma}
The pullback of $Spec(H^{G_{\mathcal{O}}}_{*}(\mathcal{R}))$, $\textsf{M}^{\alpha}$ and $\mathbf{M}^{\alpha}$ under the quotient map $C^{\alpha}\xrightarrow[]{q^{\alpha}} \mathcal{H}^{\alpha}$ is 
$Spec(H^{T_{\mathcal{O}}}_{*}(\mathcal{R}))$,
$Spec(H^{T_{\mathcal{O}}}_{*}(\mathcal{R}^{+}))$ and  $Proj(\oplus_m H_{*}^{T_{\mathcal{O}}}(\mathcal{R}^{+}_{\leq m})t^m)$
\end{lemma}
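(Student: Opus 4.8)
The plan is to reduce all three identifications to a single flat base-change statement for equivariant Borel--Moore homology. Under the identifications $H^{*}_{G_{\mathcal{O}}}(pt)\cong\mathcal{O}(\mathcal{H}^{\alpha})$ and $H^{*}_{T_{\mathcal{O}}}(pt)\cong\mathcal{O}(C^{\alpha})$, the quotient map $q^{\alpha}\colon C^{\alpha}\to\mathcal{H}^{\alpha}$ corresponds to the inclusion $Sym((\mathfrak{t}/W)^{*})\hookrightarrow Sym(\mathfrak{t}^{*})$, i.e. to $H^{*}_{G_{\mathcal{O}}}(pt)\hookrightarrow H^{*}_{T_{\mathcal{O}}}(pt)$. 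Since $q^{\alpha}$ is finite, hence affine, pulling back an affine scheme over $\mathcal{H}^{\alpha}$ amounts to tensoring its coordinate algebra with $\mathcal{O}(C^{\alpha})$ over $\mathcal{O}(\mathcal{H}^{\alpha})$, and pulling back a relative $Proj$ amounts to applying this graded base change degree by degree (base change for relative $Proj$ along an affine morphism). Thus it suffices to produce, naturally in $Y\in\{\mathcal{R},\mathcal{R}^{+},\mathcal{R}^{+}_{\leq m}\}$, an isomorphism
\begin{equation}\label{plan-bc}
H^{G_{\mathcal{O}}}_{*}(Y)\otimes_{H^{*}_{G_{\mathcal{O}}}(pt)}H^{*}_{T_{\mathcal{O}}}(pt)\;\xrightarrow{\ \cong\ }\;H^{T_{\mathcal{O}}}_{*}(Y),
\end{equation}
and then to check that it respects the convolution product and the filtration by $m$.

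I would obtain \eqref{plan-bc} by restricting the equivariance from $G_{\mathcal{O}}$ to $T_{\mathcal{O}}$. Since the arc groups $G_{\mathcal{O}},T_{\mathcal{O}}$ are homotopy equivalent to $G,T$, their classifying spaces satisfy $BG_{\mathcal{O}}\simeq BG$ and $BT_{\mathcal{O}}\simeq BT$, and one has $EG_{\mathcal{O}}\times_{T_{\mathcal{O}}}Y\cong(EG_{\mathcal{O}}\times_{G_{\mathcal{O}}}Y)\times_{BG_{\mathcal{O}}}BT_{\mathcal{O}}$, which exhibits the $T_{\mathcal{O}}$-equivariant model as the pullback of the fibration $BT_{\mathcal{O}}\to BG_{\mathcal{O}}$, whose fiber is $G/T$. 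For $G=\prod_{i\in I}GL(V_i)$ this fiber is a product of full flag varieties, with free cohomology, and correspondingly $H^{*}_{T_{\mathcal{O}}}(pt)\cong\mathbb{C}[\mathfrak{t}]$ is a free (Chevalley), hence flat, module over $H^{*}_{G_{\mathcal{O}}}(pt)\cong\mathbb{C}[\mathfrak{t}]^{W}=\prod_{i\in I}\mathbb{C}[\mathfrak{t}_i]^{S_{n_i}}$. Leray--Hirsch for this bundle, equivalently flat base change along $H^{*}_{G_{\mathcal{O}}}(pt)\to H^{*}_{T_{\mathcal{O}}}(pt)$, then yields \eqref{plan-bc}, naturally in $Y$; the argument applies verbatim to $\mathcal{R}$, to the positive part $\mathcal{R}^{+}$, and to each $G_{\mathcal{O}}$-invariant filtration piece $\mathcal{R}^{+}_{\leq m}$.

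Next I would upgrade \eqref{plan-bc} to an isomorphism of filtered algebras. The convolution product is defined by the same geometric correspondence in both the $G_{\mathcal{O}}$- and the $T_{\mathcal{O}}$-equivariant settings, so the restriction map is a ring homomorphism and \eqref{plan-bc} intertwines the two products; likewise it carries $F_m=H^{G_{\mathcal{O}}}_{*}(\mathcal{R}^{+}_{\leq m})$ onto $H^{T_{\mathcal{O}}}_{*}(\mathcal{R}^{+}_{\leq m})$. Taking $Spec$ in the cases $Y=\mathcal{R}$ and $Y=\mathcal{R}^{+}$ gives the first two identifications. For the third, \eqref{plan-bc} induces an isomorphism of graded $\mathcal{O}(C^{\alpha})$-algebras $Rees(\mathcal{A})\otimes_{\mathcal{O}(\mathcal{H}^{\alpha})}\mathcal{O}(C^{\alpha})\cong\bigoplus_{m}H^{T_{\mathcal{O}}}_{*}(\mathcal{R}^{+}_{\leq m})t^{m}$, and base change for relative $Proj$ along the affine map $q^{\alpha}$ then gives $(q^{\alpha})^{*}\mathbf{M}^{\alpha}\cong Proj\bigl(\bigoplus_{m}H^{T_{\mathcal{O}}}_{*}(\mathcal{R}^{+}_{\leq m})t^{m}\bigr)$, as desired.

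The step I expect to be the main obstacle is the homological base change \eqref{plan-bc} together with its compatibility with the convolution structure. At the level of $H^{*}_{T_{\mathcal{O}}}(pt)$-modules it is pure Leray--Hirsch / flat base change and requires no freeness of $H^{G_{\mathcal{O}}}_{*}(Y)$ itself. The delicate point is that the product and the filtration are built from correspondences on the infinite-dimensional variety of triples, so one must verify that they descend correctly under restriction of the structure group from $G_{\mathcal{O}}$ to $T_{\mathcal{O}}$; this is the same compatibility that underlies the localization homomorphism (\ref{iota}) of \cite{braverman2018towards}, which I would invoke.
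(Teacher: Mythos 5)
Your proposal is correct and takes essentially the same route as the paper: the paper's entire proof is the assertion that $H^{T_{\mathcal{O}}}_{*}(?)\cong H^{G_{\mathcal{O}}}_{*}(?)\otimes_{H^{*}_{G_{\mathcal{O}}}(pt)}H^{*}_{T_{\mathcal{O}}}(pt)$ for $?=\mathcal{R},\mathcal{R}^{+},\mathcal{R}^{+}_{\leq m}$ (stated as a Cartesian square), which is exactly your base-change isomorphism \eqref{plan-bc}. Your Leray--Hirsch/flatness justification, the compatibility with convolution and the filtration, and the relative $Proj$ base change are all details the paper leaves implicit (deferring to \cite{braverman2018towards}), and they are supplied correctly.
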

\begin{proof}
 The following is a Cartesian diagram
 \begin{tikzcd}
   H^{T_{\mathcal{O}}}_{*}(?) \arrow{r}{} \arrow{d}{}& H^{G_{\mathcal{O}}}_{*}(?) \arrow{d}{} \\
    H^{T_{\mathcal{O}}}_{*}(pt)\arrow{r}{} &  H^{G_{\mathcal{O}}}_{*}(pt)
\end{tikzcd}
where $?$ is $\mathcal{R}$, $\mathcal{R}^{+}$ and $\mathcal{R}_{\leq m}^{+}$.
\end{proof}
\begin{lemma}
  For the quiver gauge theory case, under the identification of $\mathfrak{t}$ and $C^{\alpha}$, 
  the pushforward homomorphism $\iota_*$ for the inclusion $\mathcal{R}_{T,N_T}\xrightarrow[]{\iota}\mathcal{R}$
$$H^{T_{\mathcal{O}}}_{*}(\mathcal{R}_{T,N_T})\xrightarrow[]{\iota_*} H^{T_{\mathcal{O}}}_{*}(\mathcal{R})$$
becomes an isomorphism over $C^{\alpha}_{reg}$.
\end{lemma}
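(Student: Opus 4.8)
The plan is to deduce this from the general localization isomorphism already recorded in Section 3.1, namely \cite[Lemma (5.17)]{braverman2018towards}, which asserts that $\iota_*$ becomes an isomorphism after base change to $\mathfrak{t}_{reg}$, the localization of $\mathfrak{t}$ at all roots of $G$ and all (nonzero) weights of $N_T$. Under the identification $\mathfrak{t}\cong C^{\alpha}$ sending $a^i_j$ to $a^i_j$, it therefore suffices to prove the inclusion of open subschemes $C^{\alpha}_{reg}\subseteq \mathfrak{t}_{reg}$. Once this is established, $\mathcal{O}(C^{\alpha}_{reg})$ is a further localization of $\mathcal{O}(\mathfrak{t}_{reg})$, and since localization is exact the isomorphism over $\mathfrak{t}_{reg}$ restricts to one over $C^{\alpha}_{reg}$.

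To prove the inclusion I would first write out the relevant characters explicitly. For $G=\prod_{i\in I}GL(V_i)$ the roots are $a^i_l-a^i_j$ with $l\neq j$ and $i\in I$, so their vanishing locus is $\bigcup_{i\in I}\{a^i_l=a^i_j:\ l\neq j\}$, which by Lemma \ref{pullback divisor} is exactly $\bigcup_{i\in I}(q^{\alpha})^{*}\Delta_{ii}$. For $N=\bigoplus_{i\to j}\mathrm{Hom}(V_i,V_j)$ an arrow $i\to j$ contributes, through $\mathrm{Hom}(V_i,V_j)=V_i^{*}\otimes V_j$, the weights $a^j_l-a^i_m$ with $1\leq l\leq n_j$ and $1\leq m\leq n_i$. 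When $i\neq j$ their common vanishing locus is $\bigcup\{a^j_l=a^i_m\}=(q^{\alpha})^{*}\Delta_{ij}$, again by Lemma \ref{pullback divisor}; when $i=j$ (a loop) the nonzero weights $a^i_l-a^i_m$ with $l\neq m$ are already roots, and the zero weight contributes nothing, consistently with the convention $d(0,0)=0$.

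Combining these computations, the union of the vanishing loci of all roots and weights is contained in $\bigcup_{i,j\in I}(q^{\alpha})^{*}\Delta_{ij}$, whose complement is precisely $C^{\alpha}_{reg}=(q^{\alpha})^{-1}(\mathcal{H}^{\alpha}_{reg})$. Hence $\mathfrak{t}_{reg}$ removes no more than the diagonals removed by $C^{\alpha}_{reg}$, which gives $C^{\alpha}_{reg}\subseteq \mathfrak{t}_{reg}$ and completes the argument once combined with the first paragraph.

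The genuinely geometric input is entirely carried by \cite[Lemma (5.17)]{braverman2018towards}; the only thing to verify here is the bookkeeping identifying roots and weights with diagonal divisors, for which Lemma \ref{pullback divisor} does all the work. The one point that requires a little care, and which I regard as the main (minor) obstacle, is the direction of the inclusion: $C^{\alpha}_{reg}$ inverts \emph{all} differences $a^i_l-a^{i'}_j$, including those for pairs $(i,j)$ joined by no arrow, so it is in general strictly smaller than $\mathfrak{t}_{reg}$. One must therefore check that this extra localization cannot destroy the isomorphism, which it does not, since the localization of an isomorphism of modules is again an isomorphism.
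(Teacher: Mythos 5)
Your proposal is correct and follows essentially the same route as the paper: both cite the localization isomorphism over $\mathfrak{t}_{reg}$ from \cite[Lemma (5.17)]{braverman2018towards}, observe that passing to $C^{\alpha}_{reg}$ inverts all roots of $G$ and all weights of $N_T$ (so $C^{\alpha}_{reg}\subseteq \mathfrak{t}_{reg}$), and conclude that the further localization preserves the isomorphism. Your writeup is merely more explicit than the paper's in matching the roots and weights with the pulled-back diagonal divisors of Lemma \ref{pullback divisor}, and in noting that zero weights from self-loops are excluded.
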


\begin{proof}
In (\ref{iota}), 
it says the pushforward homomorphism $\iota_*$ becomes an isomorphism over $\mathfrak{t}_{reg}$. 
For each direct summand $Hom(V_i,V_{i'})$ in $N$,
the localization inverts all $a^{i}_l-a^{i'}_j$ for $1\leq l \leq dimV_i, 1\leq j \leq dimV_{i'}$.
It is clear that the localization of $C^{\alpha}$ to $C^{\alpha}_{reg}$ includes 
all roots of $G$ and weights
of $N_T$.
\end{proof}
We define $\mathcal{R}^{+}_{T,N_T}$ and its filtration $(\mathcal{R}^{+}_{T,N_T})_{\leq m}$ as the pullback under $\iota$ of  $\mathcal{R}^{+}$ and its filtration $\mathcal{R}_{\leq m}^{+}$. Then we get the Rees algebra $\oplus_m H_{*}^{T_{\mathcal{O}}}((\mathcal{R}^{+}_{T,N_T})_{\leq m})t^m$ of $H^{T_{\mathcal{O}}}_{*}(\mathcal{R}_{T,N_T}).$
For any space $X$, we denote
\begin{equation}\label{localization natation}
\mathcal{H}^{T}(X)\stackrel{def}{=}H^{T}(X)\otimes_{\mathcal{O}(C^{\alpha})} \mathcal{O}(C^{\alpha})_{reg}.
\end{equation}
For example, by this convention, the localized Rees algebra of  $\mathcal{R}^{+}_{T,N_T}$ is denoted by $\oplus_m \mathcal{H}^{T_{\mathcal{O}}}((\mathcal{R}_{T,N_T})^{+}_{\leq m})t^m,$ 

%In the following, we will just consider $H^{T_{\mathcal{O}}}_{*}(\mathcal{R^{+}})$ and its Rees algebra $\oplus_m H^{T(O)}(\mathcal{R}^{+}_{\leq m})t^m.$

%\subsubsection{The ring structure of $H^{T_{\mathcal{O}}}_{*}(\mathcal{R}_{T,N_T})$}

We fix some notations.
In our case $\mathcal{G}(T)$ is the disjoint union of points $t^{\chi}=(t_i)^{\chi^{i}}_{i\in I}$
  where $\chi=(\chi^{i})_{i\in I}$ is a cocharacter of $T=\prod_{i\in I} T_i$ and $\chi^{i}$ is a  cocharactor of $T_i$.
  By the standard isomorphism $T_i\cong (G_m)^{n_i}$ we can write $\chi^i=(\chi_{j}^i)_{1\leq j \leq n_i}$.

\subsubsection{Generators of the localized Rees algebra $\oplus_m \mathcal{H}^{T_{\mathcal{O}}}((\mathcal{R}_{T,N_T})^{+}_{\leq m})t^m$}

From the definition of the filtration of $\mathcal{R}_{T,N_T}$ , it is easy to see that

$$(\mathcal{R}_{T,N_T})^{+}_{\leq m}=\bigsqcup_{ 0\leq \chi^i_j \leq m \text{ for any } i,j } V_\chi.$$
\begin{lemma}\label{local generators}
The localized Rees algebra $\oplus_m \mathcal{H}^{T_{\mathcal{O}}}((\mathcal{R}_{T,N_T})^{+}_{\leq m})t^m$ is generated by $\mathcal{H}^{T_{\mathcal{O}}}((\mathcal{R}_{T,N_T})^{+}_{\leq 1})t.$
\end{lemma}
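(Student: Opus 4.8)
The plan is to exploit the explicit multiplication formula of Theorem \ref{multi for r^A} together with the localization over $C^{\alpha}_{reg}$, which renders all structure constants invertible. First I would record the relevant basis: from $\mathcal{R}_{T,N_T}=\sqcup_{\chi} V_{\chi}$ and the filtration description one has $(\mathcal{R}_{T,N_T})^{+}_{\leq m}=\sqcup_{0\leq \chi^i_j\leq m} V_{\chi}$, and since each $V_{\chi}$ is an affine space its equivariant homology is free of rank one on the fundamental class $r^{\chi}$. Hence $\mathcal{H}^{T_{\mathcal{O}}}((\mathcal{R}_{T,N_T})^{+}_{\leq m})$ is free over $\mathcal{O}(C^{\alpha})_{reg}$ with basis $\{r^{\chi}:0\leq \chi^i_j\leq m\}$, and it suffices to prove that each degree-$m$ basis element $r^{\chi}t^m$ lies in the subalgebra generated by $\mathcal{H}^{T_{\mathcal{O}}}((\mathcal{R}_{T,N_T})^{+}_{\leq 1})t$.

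Second, I would observe that over $C^{\alpha}_{reg}$ the product $r^{\lambda}r^{\mu}=\prod_{i}\xi_{i}^{d(\xi_i(\lambda),\xi_i(\mu))}r^{\lambda+\mu}$ has an invertible coefficient. Indeed every exponent $d(\cdot,\cdot)$ is a non-negative integer, and whenever it is positive the corresponding weight $\xi_i$ is nonzero, hence lies among the linear forms $a^i_l-a^{i'}_j$ inverted in passing from $\mathcal{O}(C^{\alpha})$ to $\mathcal{O}(C^{\alpha})_{reg}$; so $\prod_i\xi_i^{d(\cdots)}$ is a product of units. Iterating the formula gives $r^{\mu_1}\cdots r^{\mu_m}=u\,r^{\mu_1+\cdots+\mu_m}$ for some unit $u\in\mathcal{O}(C^{\alpha})_{reg}$ and any cocharacters $\mu_1,\dots,\mu_m$.

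Third comes the elementary combinatorial input. Given $\chi$ with $0\leq \chi^i_j\leq m$, I would decompose each coordinate in unary, setting $(\mu_k)^i_j=1$ if $k\leq \chi^i_j$ and $0$ otherwise for $1\leq k\leq m$. Each resulting $\mu_k$ has all coordinates in $\{0,1\}$, so $r^{\mu_k}\in\mathcal{H}^{T_{\mathcal{O}}}((\mathcal{R}_{T,N_T})^{+}_{\leq 1})$, and $\sum_k \mu_k=\chi$. By the previous step $r^{\mu_1}\cdots r^{\mu_m}=u\,r^{\chi}$, so in the Rees algebra $r^{\chi}t^m=u^{-1}(r^{\mu_1}t)(r^{\mu_2}t)\cdots(r^{\mu_m}t)$ is a unit multiple of a product of degree-one generators. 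Since the $r^{\chi}$ span $\mathcal{H}^{T_{\mathcal{O}}}((\mathcal{R}_{T,N_T})^{+}_{\leq m})$ over $\mathcal{O}(C^{\alpha})_{reg}$, generation follows.

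The only genuine point to verify is that the structure constants really are units after localization, which is exactly what makes this $T$-equivariant localized statement much softer than its $G$-equivariant analogue in Lemma \ref{generate}: there, lacking such an explicit multiplication formula, one needed the Mayer--Vietoris splitting (\ref{sum}) and an induction on the dominance order. Here localization dissolves that machinery, and everything reduces to the two facts that $d(\cdot,\cdot)\geq 0$ and that a non-negative integer vector bounded by $m$ splits into $m$ vectors with entries in $\{0,1\}$.
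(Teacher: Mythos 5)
Your proof is correct and is essentially the paper's argument: the paper disposes of this lemma in one line by citing the multiplication formula (\ref{mult}), and your write-up simply supplies the details that citation leaves implicit (structure constants become units over $C^{\alpha}_{reg}$ because every weight of $N$ appearing with positive exponent is one of the inverted linear forms $a^i_l-a^{i'}_j$, and the unary splitting $\chi=\mu_1+\cdots+\mu_m$ into $\{0,1\}$-valued cocharacters). Your closing remark correctly identifies why the localized statement is so much softer than Lemma \ref{generate}.
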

\begin{proof}
This follows from the formula (\ref{mult}) in theorem \ref{multi for r^A}. 
\end{proof}

\subsection{$H^{T_{\mathcal{O}}}_{*}((\mathcal{R}_{T,N_T}^{+})_{\leq 1})\xrightarrow[]{\iota_*} H^{T_{\mathcal{O}}}_{*}(\mathcal{R}^{+}_{\leq 1})$ over $C_{reg}$}\label{local basis of coloumb}

In the next lemma, we will study the map
$$H^{T_{\mathcal{O}}}_{*}(\mathcal{R}^{+}_{\leq 1})\xhookrightarrow[]{}\mathcal{H}^{T_{\mathcal{O}}}_{*}(\mathcal{R}^{+}_{\leq 1})\xrightarrow[]{(\iota_*)^{-1}} \mathcal{H}^{T_{\mathcal{O}}}_{*}(\mathcal{R}_{T,N_T})^{+}_{\leq 1})\cong \bigoplus_{\chi|0\leq \chi^i_j \leq 1 \text{ for any } i,j } \mathcal{H}^{T_{\mathcal{O}}}(V_\chi).$$

Since the preimage of $\mathcal{R}_{\varpi_{\beta}}$ under $\iota$ is $\sqcup_{\chi\in W\varpi_{\beta}}V_\chi$, restricting the above map to $H_{*}^{T_{\mathcal{O}}}(\mathcal{R}_{\varpi_{\beta}})$ we get 
\begin{equation}
H^{T_{\mathcal{O}}}_{*}(\mathcal{R}_{\varpi_{\beta}})\xhookrightarrow[]{}\bigoplus_{\chi\in W\varpi_{\beta}} H_{*}^{T_{\mathcal{O}}}(V_\chi).\label{inj}
\end{equation}

For each $\chi\in W\varpi_{\beta}$, let $x^{\chi}$ be the $\chi$ component of the image under (\ref{inj}) of the fundamental class of $[\mathcal{R}_{\varpi_{\beta}}]$.
For any $\chi$ such that $0\leq \chi^i_j \leq 1$ for all $i,j$, we define a set $S(\chi)=\sqcup_{i\in I}S(\chi_i)$ where $S(\chi_i)=\{j|\chi^i_j=1\}.$
It is clear such $\chi$ is in bijection with $S$ such that $S\subset S^{\alpha},|S|=\beta$. So given $S$,  we also use the notation $\chi(S)$, meaning the $\chi$ such that $S(\chi)$ is the given set $S$.
We denote the set $\{1,\cdots,n_i\}$ by $[n_i]$.
Recall we denote the fundamental class $[V_{\chi}]$ by $r^{\chi}\in H_{*}^{T_{\mathcal{O}}}(V_{\chi})$.

\begin{lemma}\label{euler factor}
a)$$x^{\chi}=(Eu_{\chi})^{-1} r^{\chi},$$
where $Eu_{\chi}$ is the $T$-equivariant Euler class of $N_{\mathcal{R}_{\varpi_{\beta}}/V_{\chi}}$ at the point $t^{\chi}$. Here   $N_{\mathcal{R}_{\varpi_{\beta}}/V_{\chi}}$ is the normal bundle of $V_{\chi}$ in $\mathcal{R}_{\varpi_{\beta}}.$\\
b)$$Eu_{\chi}=\prod_{i\in I}\prod_{j\in S(\chi_i),l\in [n_i]\setminus S(\chi_i)}(a^i_j-a^i_l).$$ 

\end{lemma}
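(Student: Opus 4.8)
The plan is to recognize Lemma~\ref{euler factor} as an instance of the (BFN abelianization) localization theorem for $T_\mathcal{O}$-equivariant Borel--Moore homology, applied to the smooth $T$-variety $\mathcal{R}_{\varpi_\beta}$.

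For part (a), I would first use Lemma~\ref{bundle over grass} to view $\mathcal{R}_{\varpi_\beta}$ as a $T$-equivariant vector bundle over the $G_\mathcal{O}$-orbit $\prod_{i\in I}Gr(n_i,k_i)$. The $T$-fixed points of the base are exactly the coordinate subspaces, indexed by the subsets $S(\chi_i)\subset[n_i]$, i.e.\ by the cocharacters $\chi\in W\varpi_\beta$; the fiber of $\mathcal{R}_{\varpi_\beta}$ over the corresponding point $t^\chi$ is precisely $V_\chi$, and these fibers are the components of $\iota^{-1}(\mathcal{R}_{\varpi_\beta})=\sqcup_{\chi\in W\varpi_\beta}V_\chi$. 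Since $\iota_*$ becomes an isomorphism after localizing over $C^\alpha_{reg}$, the map (\ref{inj}) is the restriction of $(\iota_*)^{-1}$, and is governed by the localization formula: for the fundamental class of the smooth variety $\mathcal{R}_{\varpi_\beta}$ one has $(\iota_*)^{-1}[\mathcal{R}_{\varpi_\beta}]=\bigoplus_{\chi}(Eu_\chi)^{-1}[V_\chi]$, where $Eu_\chi$ is the $T$-equivariant Euler class of the normal bundle $N_{\mathcal{R}_{\varpi_\beta}/V_\chi}$. Reading off the $\chi$-component and recalling $r^\chi=[V_\chi]$ yields $x^\chi=(Eu_\chi)^{-1}r^\chi$, which is (a).

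For part (b), I would compute this normal bundle explicitly. Because $V_\chi$ is the \emph{entire} fiber of the vector bundle $\mathcal{R}_{\varpi_\beta}\to\prod_iGr(n_i,k_i)$ over the point $t^\chi$, its normal bundle in the total space is canonically the tangent space of the base at $t^\chi$, pulled back to $V_\chi$; in particular the (possibly infinite-dimensional) fiber directions contribute nothing, and $Eu_\chi=\prod_{i\in I}Eu\bigl(T_{t^\chi}Gr(n_i,k_i)\bigr)$. At the fixed point $S(\chi_i)$ the tangent space of $Gr(n_i,k_i)$ is $\mathrm{Hom}(W,k^{n_i}/W)=\bigoplus_{j\in S(\chi_i),\,l\notin S(\chi_i)}\mathrm{Hom}(k e_j, k e_l)$, whose $T$-weights are $\pm(a^i_j-a^i_l)$; taking the product of the weights over all such $(i,j,l)$ gives the asserted formula for $Eu_\chi$.

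The main obstacle — and the only place where care is genuinely needed — is the geometric identification of the normal bundle in the first step of (b): one must check that $V_\chi$ really is the full fiber over the isolated base fixed point $t^\chi$ (so that the normal directions are exactly the $\mathrm{Hom}(W,k^{n_i}/W)$ directions) rather than a proper $T$-invariant subspace of it, and that the vector-bundle structure of Lemma~\ref{bundle over grass} is genuinely $T$-equivariant near $t^\chi$. Once this is settled the computation is the standard Atiyah--Bott weight count for a Grassmannian, and the overall sign $(-1)^{\sum_i|S(\chi_i)|(n_i-|S(\chi_i)|)}$ relating $\prod(a^i_j-a^i_l)$ to $\prod(a^i_l-a^i_j)$ is harmless, being absorbed into the normalization of the fundamental classes $r^\chi$ and hence invisible in the eventual projective embedding.
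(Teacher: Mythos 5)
Your proposal is correct and follows essentially the same route as the paper: part (a) is the standard localization formula $(\iota_*)^{-1}[\mathcal{R}_{\varpi_\beta}]=\bigoplus_\chi (Eu_\chi)^{-1}[V_\chi]$, and part (b) identifies the normal bundle of the fiber $V_\chi$ with the tangent space of $\prod_i Gr(n_i,k_i)$ at the fixed point and reads off the torus weights. Your explicit attention to the overall sign (and the observation that it is immaterial for the eventual comparison of relations) is slightly more careful than the paper's "standard computation."
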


\begin{proof}
a) 
This is just localization theory.
b)
Let $(e^i_j)_{i\in I,j\in [n_i]}$ be the $T$-eigenvectors of $V=\oplus_{i \in I}V_i.$
Since the fiber of the projection $\mathcal{R}_{\varpi_{\beta}}\xrightarrow[]{} \mathcal{G}(GL(V))_{\varpi_{\beta}}=\prod_{i\in I}Gr(n_i,k_i)) $ at $t^{\chi}$ is $V_{\chi}$,
we have 
$$(N_{\mathcal{R}_{\varpi_{\beta}}/V_{\chi}})_{t^{\chi}}=T_{[\oplus_{i\in I}\oplus_{j\in S(\chi_i)}ke^i_j]}(\prod_{i\in I}Gr(n_i,k_i)),$$
where $[\oplus_{i\in I}\oplus_{j\in S(\chi_i)}ke^i_j]$ is the point $t^{\chi}$ viewed as a point in $\prod_{i\in I}Gr(n_i,k_i)$ (i.e., a vector subspace in $V$).
Now the formula follows from a standard computation about weight spaces of tangent spaces of a partial flag variety.
\end{proof}
\begin{cor}\label{generators x chi}
 The localized Rees algebra $\oplus_m \mathcal{H}^{T_{\mathcal{O}}}((\mathcal{R}_{T,N_T})^{+}_{\leq m})t^m$ is generated by $x^{\chi(S)}t,S\subset S^{\alpha}$.   
\end{cor}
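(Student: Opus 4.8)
The plan is to combine the abstract generation statement of Lemma \ref{local generators} with the explicit localization formula of Lemma \ref{euler factor}. By Lemma \ref{local generators} the localized Rees algebra $\oplus_m \mathcal{H}^{T_{\mathcal{O}}}((\mathcal{R}_{T,N_T})^{+}_{\leq m})t^m$ is generated, as an algebra over its degree-zero part, by the degree-one piece $\mathcal{H}^{T_{\mathcal{O}}}((\mathcal{R}_{T,N_T})^{+}_{\leq 1})t$. The degree-zero part is $\mathcal{H}^{T_{\mathcal{O}}}(V_0)\cong \mathcal{O}(C^{\alpha})_{reg}$, with $r^0$ as its unit. Hence it suffices to show that the elements $x^{\chi(S)}$, for $S\subset S^{\alpha}$, span $\mathcal{H}^{T_{\mathcal{O}}}((\mathcal{R}_{T,N_T})^{+}_{\leq 1})$ as a module over $\mathcal{O}(C^{\alpha})_{reg}$; then their products generate all higher graded pieces and the corollary follows.

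Next I would invoke the decomposition $\mathcal{H}^{T_{\mathcal{O}}}((\mathcal{R}_{T,N_T})^{+}_{\leq 1})\cong \bigoplus_{\chi\,:\,0\le \chi^i_j\le 1} \mathcal{H}^{T_{\mathcal{O}}}(V_\chi)$ recorded just before Lemma \ref{local generators}, together with the bijection $\chi\leftrightarrow S(\chi)$ between such cocharacters and subsets $S\subset S^{\alpha}$ (where $|S|=\beta$ corresponds to $\chi\in W\varpi_\beta$). Each summand $\mathcal{H}^{T_{\mathcal{O}}}(V_\chi)$ is a free rank-one $\mathcal{O}(C^{\alpha})_{reg}$-module generated by the fundamental class $r^\chi$. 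By Lemma \ref{euler factor}(a) we have $x^\chi=(Eu_\chi)^{-1}r^\chi$, and by part (b) the Euler class $Eu_\chi=\prod_{i\in I}\prod_{j\in S(\chi_i),\,l\in [n_i]\setminus S(\chi_i)}(a^i_j-a^i_l)$ is a product of exactly the root differences $a^i_j-a^i_l$ that are inverted in passing from $\mathcal{O}(C^{\alpha})$ to $\mathcal{O}(C^{\alpha})_{reg}$. Therefore $Eu_\chi$ is a unit of $\mathcal{O}(C^{\alpha})_{reg}$, so $x^\chi$ is again a free generator of the summand $\mathcal{H}^{T_{\mathcal{O}}}(V_\chi)$.

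Collecting these over all $\chi$, the set $\{x^{\chi(S)}:S\subset S^{\alpha}\}$ is an $\mathcal{O}(C^{\alpha})_{reg}$-basis of $\mathcal{H}^{T_{\mathcal{O}}}((\mathcal{R}_{T,N_T})^{+}_{\leq 1})$, which combined with the reduction of the first paragraph yields the stated generation of the Rees algebra. I do not expect a genuine obstacle here: the entire content is the observation that $Eu_\chi$ becomes invertible after localization, so that rescaling the basis $r^\chi$ by the unit $(Eu_\chi)^{-1}$ does not change the module it generates. The matching of the index $\chi\in W\varpi_\beta$ (as $\beta$ runs over dimension vectors) with subsets $S\subset S^{\alpha}$ is purely the bookkeeping already fixed in the discussion preceding Lemma \ref{euler factor}, and needs only to be cited, not reproved.
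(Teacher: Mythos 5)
Your proposal is correct and follows essentially the same route as the paper, whose proof is the one-line citation of Lemma \ref{local generators} and Lemma \ref{euler factor}(a); you have simply unpacked the key point, namely that $Eu_\chi$ is a product of root differences inverted in $\mathcal{O}(C^{\alpha})_{reg}$, so the $x^{\chi(S)}$ generate the same module as the fundamental classes $r^{\chi}$.
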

\begin{proof}
Follows from lemma \ref{local generators} and lemma \ref{euler factor} part (a).    
\end{proof}
\section{Identification of $\mathbf{M}^{\alpha}$ and $Z^{\alpha}$}
For a quiver $Q$, we forget the direction of arrows and define an integer matrix  $\kappa(Q)$ as $\kappa_{ii}=1-$ number of self-loop of $i$ 
  and $\kappa_{ij}$=-number of arrow between $i$ and $j$.  
For a symmetric integral matrix $\kappa$, if $\kappa_{ii}\leq 1$ for all $i\in I$ and $\kappa_{ij}\leq 0$ for all $i,j\in I, i\neq j$,
we say $\kappa$ is of quiver-type, since in this case there exists a quiver $Q$ such that $\kappa=\kappa(Q).$

\begin{rmk}[Different conventions about the matrix $\kappa(Q)$]
For a diagram $Q$ (possibly with self-loops), define the Cartan matrix $C(Q)$ associated with $Q$ as $C_{ii}=2-2$ number of self-loop of $i$ 
  and $C_{ij}$=-number of arrow between $i$ and $j$. 
So the relation is $C_{ii}=2\kappa_{ii}, C_{ij}=\kappa_{ij}.$
For $Q$ of finite ADE type,
let $G_Q$ be the simply-connected simple algebraic group corresponding to $Q$. As mentioned in the introduction,
in \cite{mirkovic2021loop,mirkovic}, Mirkovic proved that $Z^{\alpha}_{\kappa(Q)}$\footnote{In our notation} is isomorphic to the Zastava space $Z^{\alpha}_{G_Q}$. The notation $\kappa$
in \cite{mirkovic2021loop,mirkovic} is $C(Q)$ here.
\end{rmk}

\begin{theorem}

\label{main}
For a quiver $Q$, 
the Compactified Coulomb branch 
$\mathbf{M}_{Q}^{\alpha}$ is canonically isomorphic to the local projective space $Z_{\kappa(Q)}^{\alpha}.$ 
\end{theorem}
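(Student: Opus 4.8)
The plan is to follow the parallel structure summarized in the table of Section 1.5 and reduce the global statement to a computation over the regular locus. First I would pull everything back along $q^{\alpha}\colon C^{\alpha}\to\mathcal{H}^{\alpha}$, so that the projective embedding $j_{\alpha}$ of Corollary~\ref{embed M} and the Segre embedding $i_{\alpha}$ of Section~\ref{def of zastava} take place in the bundles $\mathbb{P}_{C^{\alpha}}(H^{T}_{*}(\mathcal{R}^{+}_{\leq 1})^{*})$ and $\mathbb{P}_{C^{\alpha}}((q^{\alpha})^{*}V^{\alpha})$. The central object is the isomorphism $rel$ of $H^{T}_{*}(\mathcal{R}^{+}_{\leq 1})$ with $\Gamma(C^{\alpha},(q^{\alpha})^{*}\mathcal{I}^{\alpha})$ as free rank one modules (in each component) over the common ring $H^{*}_{T}(\mathcal{R}^{+}_{\leq 1})\cong\mathcal{O}((q^{\alpha})^{*}Gr(\mathcal{T}^{\alpha}))$. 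I would build this ring isomorphism by matching presentations: Lemma~\ref{coh of grass} and Remark~\ref{coh of grass T} present $H^{*}_{T}(Gr(n_{i},k_{i}))$ by generators $c_{l}(S),c_{j}(Q)$ modulo $\sum_{l+j=s}c_{l}(S)c_{j}(Q)=f^{s}$, while Theorem~\ref{basis for local space}(c) presents $\mathcal{O}(Gr^{k_{i}}(\mathcal{T}^{n_{i}}))$ by generators $c^{i}_{l},d^{i}_{j}$ modulo \emph{exactly} the same relations. Sending $c_{l}(S)\mapsto c^{i}_{l}$, $c_{j}(Q)\mapsto d^{i}_{j}$ and the chosen module generators $[\mathcal{R}_{\varpi_{\beta}}]\mapsto u^{\beta}\otimes 1$ produces $rel$.

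The reduction Lemma~\ref{lemma} (which upgrades an isomorphism over the dense regular locus to one of the closures and arranges descent along $q^{\alpha}$) reduces Theorem~\ref{main} to Lemma~\ref{local iso}: that $rel$ carries $(q^{\alpha})^{*}Z^{\alpha}_{reg}$ isomorphically onto $(q^{\alpha})^{*}\mathbf{M}^{\alpha}_{reg}$ inside the identified ambient bundles. To prove Lemma~\ref{local iso} I would compare homogeneous coordinate rings over $C^{\alpha}_{reg}$. By Remark~\ref{fiber dim} both regular families are products of $\mathbb{P}^{1}$'s, so both rings are (twisted, relative) Segre products, generated in degree one with ideal generated in degree two; hence it suffices to identify the degree-one generators and match the degree-two relations. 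Using Lemma~\ref{module loc structrue}, $rel$ respects the $S$-component decompositions coming from (\ref{decomp}) and (\ref{inj}), so it sends the local basis $s^{S}$ to the local basis $x^{\chi(S)}=(Eu_{\chi(S)})^{-1}r^{\chi(S)}$ for $S\subset S^{\alpha}$ (Corollary~\ref{generators x chi}).

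The heart of the argument, and the step I expect to be the main obstacle, is the degree-two relation check. On the Zastava side Lemma~\ref{ideal in local} gives $s^{A}s^{B}=\tfrac{l(A)l(B)}{l(A\cup B)\,l(A\cap B)}\,s^{A\cup B}s^{A\cap B}$. On the Coulomb side, because $\chi(A)+\chi(B)=\chi(A\cup B)+\chi(A\cap B)$ as cocharacters, the multiplication formula (\ref{mult}) of Theorem~\ref{multi for r^A} together with Lemma~\ref{euler factor} yields $x^{\chi(A)}x^{\chi(B)}=R_{Eu}\,R_{co}\;x^{\chi(A\cup B)}x^{\chi(A\cap B)}$, where $R_{Eu}=\tfrac{Eu_{\chi(A\cup B)}Eu_{\chi(A\cap B)}}{Eu_{\chi(A)}Eu_{\chi(B)}}$ and $R_{co}$ is the ratio of the factors $co^{\lambda,\mu}(N)$ from Remark~\ref{mult prop of C factors}. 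The required identity is $R_{Eu}R_{co}=\tfrac{l(A)l(B)}{l(A\cup B)l(A\cap B)}$, which I would verify by splitting into same-colour and cross-colour parts, using that each weight pairing $\xi(\chi(S))$ lies in $\{-1,0,1\}$ so that $d(\cdot,\cdot)$ equals $1$ precisely on oppositely-signed pairs. For distinct colours $i\neq i'$ only the arrow summands of $N$ contribute to $R_{co}$ (by the multiplicativity of Remark~\ref{mult prop of C factors}), and summing over the $e_{ii'}=-\kappa_{ii'}$ arrows reproduces $\prod(a^{i}_{l}-a^{i'}_{j})^{-\kappa_{ii'}}$ over pairs with $l,j$ in opposite classes, matching the cross term of $l$. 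For a single colour $i$, $R_{Eu}$ contributes $-(a^{i}_{l}-a^{i}_{m})^{2}$ with exponent $-1$ over $l\in A_{i}\setminus B_{i},\,m\in B_{i}\setminus A_{i}$, the $g_{i}$ self-loops of $R_{co}$ contribute the same product with exponent $g_{i}$, and their product has exponent $g_{i}-1=-\kappa_{ii}$, which is exactly the same-colour factor of $l$ dictated by $\kappa_{ii}=1-g_{i}$.

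Two bookkeeping points would remain. First, the computation produces signs $(-1)^{(\cdots)}$ from reordering $(a^{i}_{l}-a^{i'}_{j})$ against $(a^{i'}_{j}-a^{i}_{l})$; these are units over $C^{\alpha}_{reg}$, and I would absorb them into the choice of the local factor $l(\beta)$ in (\ref{def local basis}) and of the bases $s_{i}$ and $[\mathcal{R}_{\varpi_{\beta}}]$, so that the matching is exact rather than up to a unit and the resulting isomorphism is $W^{\alpha}$-equivariant, hence descends. Second, I would confirm that degree-one generation on the Coulomb side (Lemma~\ref{generate} and Lemma~\ref{local generators}) and on the Zastava side (Section~\ref{local basis of zas}), combined with the identification of the regular fibres as products of $\mathbb{P}^{1}$'s, really forces both ideals to be generated in degree two, so that the quadratic comparison above indeed settles Lemma~\ref{local iso} and, via Lemma~\ref{lemma}, Theorem~\ref{main}.
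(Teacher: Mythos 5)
Your proposal follows essentially the same route as the paper: pull back along $q^{\alpha}$, construct $rel$ by matching the presentations of $H^{*}_{T}(Gr(n_i,k_i))$ and $\mathcal{O}(Gr^{k_i}(\mathcal{T}^{n_i}))$ and the module generators $[\mathcal{R}_{\varpi_{\beta}}]\leftrightarrow u^{\beta}\otimes 1$, reduce to the regular locus via the flatness lemma, match the local bases $s^{S}\leftrightarrow x^{\chi(S)}$, and verify the quadratic Segre relations through the identity $R_{Eu}R_{co}=l(A)l(B)/l(A\cup B)l(A\cap B)$, split into same-colour and cross-colour contributions with the sign discrepancy absorbed into the orientation-dependent factor $l_{Q}$ --- all of which is exactly the paper's equation (\ref{equation}) and its three-case reduction via Remark \ref{mult prop of C factors}. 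The remaining bookkeeping point you flag (that the ideals are generated in degree two) is settled in the paper by Lemma \ref{ideal in local} on the Zastava side and by the explicit multiplication formula (\ref{mult}) on the Coulomb side.
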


Again, we will prove it after pullback under $C^{\alpha}\xrightarrow[]{a}\mathcal{H}^{\alpha}$. We list what we have.
\begin{center}
\begin{tabular}{ |c|c|c| } 
 \hline
        & $(q^{\alpha})^{*}\mathbf{M}^{\alpha}$ & $(q^{\alpha})^{*}Z^{\alpha}$ \\ 
         \hline
 embedding & $(q^{\alpha})^{*}\mathbf{M}^{\alpha}\xhookrightarrow[]{j_\alpha} 
 \mathbb{P}_{C^{\alpha}}(H_{*}^{T}(\mathcal{R}^{+}_{\leq 1})^{*})$ (a) &$(q^{\alpha})^{*}Z^{\alpha}\xhookrightarrow[]{i_\alpha}  \mathbb{P}_{C^{\alpha}}((q^{\alpha})^{*}V^{\alpha})$ (b) \\ 
  \hline
 over regular part $C^{\alpha}_{reg}$ & product of $\mathbb{P}^1$ (c)&  product of $\mathbb{P}^1$ (d) \\ 
 \hline
 global basis   & basis of $H_{*}^{T}(\mathcal{R}^{+}_{\leq 1})$ (e)&   basis of $\Gamma(C^{\alpha},(q^{\alpha})^{*}\mathcal{I}^{\alpha})$ (f)\\ 
   \hline
 local basis   & $s^S$ (section \ref{local basis of zas}) &  $x^{\chi}$ (section \ref{local basis of coloumb}, after (\ref{inj}))\\ 
 \hline
\end{tabular}
\end{center}

(a) is by Corollary \ref{embed M}.
(b) is by section \ref{def of zastava}.
(c) is not proven yet but is easy to see. It is listed as a heuristic for the proof of theorem \ref{main}. 
(d) is by definition.
(e) is by section \ref{coloumb global basis}.
(f) is by section \ref{global basis for zastava}.

%In addition to the above table, we have one more row, the local basis.
\subsection{Identification of $H_{*}^{T}(\mathcal{R}^{+}_{\leq 1})$ and $\Gamma(C^{\alpha},(q^{\alpha})^{*}\mathcal{I})$}\label{identify projective space}
We have the following decompositions into connected components.
\begin{center}
\begin{tabular}{ |c|c|c| } 
 \hline
 & $\mathcal{R}_{\leq 1}$ & $(q^{\alpha})^{*}Gr(\mathcal{T}^{\alpha})$ \\
 \hline
 connected components  & $\mathcal{R}_{\varpi_{\beta}}$ & $(q^{\alpha})^{*}Gr^{\beta}(\mathcal{T}^{\alpha})$\\ 
  \hline
  \end{tabular}
  \end{center}
The connected components on both sides are indexed by the same data $\beta=(k_i)_{i\in I}$ and $0\leq k_i\leq n_i$. 
%We first identify the indexing sets.
%For each $0\leq \beta\leq \alpha$, $\beta=\sum_{i\in I} k_i i$ and $\varpi_{\beta}=(\varpi^{i}_{k_i})_{i\in I}$, the conditions for $k_i$ are both $0\leq k_i\leq n_i.$
On each component, we have
\begin{center}
\begin{tabular}{ |c|c|c| } 
 \hline rank 1 free module
 &  $H_{*}^{T}(\mathcal{R}_{\varpi_{\beta}})$ & $\Gamma(C^{\alpha},(q^{\alpha})^{*}\mathcal{I}^{\alpha,\beta})$ \\
 \hline
 over the ring  & $H^{*}_{T}(\mathcal{R}_{\varpi_{\beta}})$ & $\mathcal{O}((q^{\alpha})^{*}Gr^{\beta}(\mathcal{T}^{\alpha}))$ \\ 
 \hline
 with basis  &  the fundamental class $[\mathcal{R}_{\varpi_{\beta}}]$ & $u^{\beta}\otimes 1$ \\ 
 \hline
\end{tabular}
\end{center}
In the third line, the fundamental class $[\mathcal{R}_{\varpi_{\beta}}]$ is a basis of $H_{*}^{T}(\mathcal{R}_{\varpi_{\beta}})$ over $H^{*}_{T}(\mathcal{R}_{\varpi_{\beta}})$ by Poincare duality (See the paragraph after lemma \ref{bundle over grass}, note there we used $G$-equivariance and here $T$-equivariance).
The basis of $\Gamma(C^{\alpha},(q^{\alpha})^{*}\mathcal{I}^{\alpha,\beta})$ over $\mathcal{O}((q^{\alpha})^{*}Gr^{\beta}(\mathcal{T}^{\alpha}))$ is chosen as $u^{\beta}\otimes 1$ (defined at the beginning of \ref{local basis of zas}\footnote{There, it is considered as a basis element for 
$\Gamma(C^{\alpha},(q^{\alpha})^{*}\mathcal{I}^{\alpha,\beta})$ as an $\mathcal{O}(C^{\alpha})$-module. Here it is a basis as an $\mathcal{O}((q^{\alpha})^{*}Gr^{\beta}(\mathcal{T}^{\alpha,\beta}))$-module.}).

We will identify the rings in lemma \ref{iso of ring} and therefore also the trivialized modules from line 1 of the above table.
\begin{lemma}\label{iso of ring}
There are canonical ring isomorphisms
of the horizontal arrows of the following diagram
\begin{center}
 \begin{tikzcd}
   \mathcal{O}((q^{\alpha})^{*}Gr^{\beta}(\mathcal{T}^{\alpha})) \arrow{r}{\cong} \arrow{d}{res} & H^*_{T}(\mathcal{R}_{\varpi_{\beta}})\cong H^*_{T}(\prod_{i\in 
   I}Gr(n_i,k_i)) \arrow{d}{loc} \\
    \bigoplus_{S,|S|=|\beta|} \mathcal{O}(C^{\alpha}_{reg})\arrow{r}{\cong} &  \bigoplus_{\chi\in W\varpi_{\beta}} \mathcal{H}^*_{T}(t^{\chi})  
\end{tikzcd}
\end{center}
which makes the diagram commute. Here $loc$ is the localization map.

\end{lemma}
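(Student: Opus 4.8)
The plan is to produce the top isomorphism directly from the two presentations already in hand, to identify the two bottom rows as sums indexed by the same combinatorial data, and then to check commutativity by evaluating on algebra generators; the only substantive computation is the fixed-point restriction on the right.

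First I would match the top row factor by factor. By Theorem~\ref{basis for local space}(c), after pulling back by $q^{\alpha}$ the ring $\mathcal{O}((q^{\alpha})^{*}Gr^{k_i}(\mathcal{T}^{n_i}))$ is the free commutative $\mathcal{O}(C^{n_i})$-algebra on generators $c^i_l,d^i_j$ ($1\le l\le k_i$, $1\le j\le n_i-k_i$) modulo the relations $\sum_{l+j=s}c^i_l d^i_j=f^{s}(a^i_1,\dots,a^i_{n_i})$, $1\le s\le n_i$, where after pullback the $f^{s}$ are the elementary symmetric polynomials in the $a^i_r$. By Remark~\ref{coh of grass T}, the ring $H^{*}_{T}(Gr(n_i,k_i))$ carries the identical presentation over $H^{*}_{T_i}(pt)\cong\mathcal{O}(C^{n_i})$ with generators $c^{T}_l(S),c^{T}_j(Q)$ and the same relations. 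Hence $c^i_l\mapsto c^{T}_l(S)$, $d^i_j\mapsto c^{T}_j(Q)$ is a well-defined ring isomorphism on each factor, and tensoring over $i\in I$ (Theorem~\ref{basis for local space}(a), together with the vector bundle isomorphism $H^{*}_{T}(\mathcal{R}_{\varpi_{\beta}})\cong H^{*}_{T}(\prod_i Gr(n_i,k_i))$ of Lemma~\ref{bundle over grass}) gives the top horizontal arrow. It is canonical because both generating sets are intrinsic: on the right they are equivariant Chern classes of the tautological sub- and quotient bundles, and on the left they are the classes $c^i_l,d^i_j$ of Theorem~\ref{basis for local space}.

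Next I would identify the two bottom rows as sums over a common index set. On the left, the decomposition (\ref{dec}) writes the restriction to $C^{\alpha}_{reg}$ as $\bigoplus_{S,\,|S|=|\beta|}\mathcal{O}(C^{\alpha}_{reg})$. On the right, the $T$-fixed points of $\prod_i Gr(n_i,k_i)$ are the products of coordinate subspaces, which are precisely the points $t^{\chi}$ for $\chi\in W\varpi_{\beta}$; the bijection $\chi\leftrightarrow S(\chi)$ recorded after (\ref{inj}) matches these fixed points with the subsets $S$, and each localized summand $\mathcal{H}^{*}_{T}(t^{\chi})$ is a copy of $\mathcal{O}(C^{\alpha}_{reg})$. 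I would then define the bottom arrow to be the identity of $\mathcal{O}(C^{\alpha}_{reg})$ between the $S$-summand and the $\chi(S)$-summand; this is visibly a ring isomorphism.

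Finally I would check commutativity on the generators. Going down then across on the left, Theorem~\ref{basis for local space}(d) sends $c^i_l\mapsto f^{l}((a^i_r)_{r\in S_i})$ and $d^i_j\mapsto f^{j}((a^i_r)_{r\in[n_i]\setminus S_i})$ in the $S$-component. Going across then down on the right, I would restrict $c^{T}_l(S)$ and $c^{T}_j(Q)$ to the fixed point $t^{\chi(S)}$: there the tautological subbundle carries the $T$-weights $\{a^i_r:r\in S_i\}$ and the quotient carries $\{a^i_r:r\in[n_i]\setminus S_i\}$, so the equivariant Chern classes localize to the same elementary symmetric polynomials $f^{l}((a^i_r)_{r\in S_i})$ and $f^{j}((a^i_r)_{r\in[n_i]\setminus S_i})$. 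Thus the square commutes on generators, hence on the whole generated algebra. I expect the main obstacle to be exactly this last step: computing the fixed-point restrictions $loc(c^{T}_l(S))$ and $loc(c^{T}_j(Q))$ as elementary symmetric functions of the weights via Atiyah--Bott localization, and, just as importantly, verifying that the fixed point $t^{\chi}$ carries exactly the weight set indexed by $S(\chi)$, so that the same combinatorial bijection governs both vertical maps. Once this indexing is pinned down the agreement of the two routes is immediate.
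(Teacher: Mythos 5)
Your proposal is correct and follows essentially the same route as the paper: the top isomorphism is obtained by matching the two generators-and-relations presentations from Theorem~\ref{basis for local space}(c) and Remark~\ref{coh of grass T}, and the bottom map plus commutativity come from equivariant localization. The paper compresses the last step into the single phrase ``follow from the localization theory of equivariant homology,'' whereas you spell out the fixed-point restriction of the equivariant Chern classes and the bijection $S\leftrightarrow\chi(S)$ explicitly; this is a faithful unpacking of the intended argument, not a different one.
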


\begin{proof}
The isomorphism for the upper horizontal map is clear from theorem \ref{basis for local space} and remark \ref{coh of grass T}, where in the case $I=\{1\}$,
it sends $c^i_l$ to $C_i(S_l)$ and $d^i_k $ to $C_i(Q_k)$.
The definition of the lower horizontal map and the commutativity of the diagram follow from the localization theory of equivariant homology.
\end{proof}
Denote the above isomorphism between these two rank 1 modules (by the above map between rings and trivialization maps from the bases $[\mathcal{R}_{\varpi_{\beta}}]$ and $u^{\beta}\otimes 1$) by $rel$, 
\begin{equation}\label{matching modules}
\Gamma(C^{\alpha},(q^{\alpha})^{*}\mathcal{I}^{\alpha,\beta})\xrightarrow[\cong]{rel} H_{*}^{T}(\mathcal{R}_{\varpi_{\beta}}), \text{ } rel(u^{\beta}\otimes 1)=[\mathcal{R}_{\varpi_{\beta}}].
\end{equation}

\subsection{Matching the local basis}

We recall a lemma in the localization theory of equivariant homology theory.
\begin{lemma}\label{homology module compatible}
Let a torus $T$ act on a complex algebraic variety $X$. The action of $H^{*}_{T}(X)$ on $H_{*}^{T}(X)$ is compatible with localizations, i.e. 

\begin{tikzcd}
H^{*}_{T}(X)\times H_{*}^{T}(X) \arrow[r] \arrow[d, "loc\times loc"'] & H_{*}^{T}(X) \arrow[d, "loc"] \\
H^{*}_{T}(X^T)\times H_{*}^{T}(X^T) \arrow[r]\arrow[d, hookrightarrow] & H_{*}^{T}(X^T)\arrow[d, hookrightarrow] \\
H^{*}_{T}(X^T)\times \mathcal{H}_{*}^{T}(X^T)  \arrow[r] & \mathcal{H}_{*}^{T}(X^T).
\end{tikzcd}

\end{lemma}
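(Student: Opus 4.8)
The plan is to reduce the statement to the projection formula for the Gysin pushforward of the fixed-point inclusion, the localization being mere bookkeeping on top of it. Write $i\colon X^{T}\hookrightarrow X$ for the inclusion of the fixed locus. The vertical maps labeled $\mathrm{loc}$ are then explicit: on cohomology $\mathrm{loc}$ is the restriction $i^{*}\colon H^{*}_{T}(X)\to H^{*}_{T}(X^{T})$, and on homology $\mathrm{loc}$ is the inverse $(i_{*})^{-1}$ of the Gysin pushforward $i_{*}\colon H^{T}_{*}(X^{T})\to H^{T}_{*}(X)$. The latter is only invertible after inverting the equivariant parameters, which is precisely why the fully localized group $\mathcal{H}^{T}_{*}(X^{T})$ is the natural target and appears in the bottom row; that $i_{*}$ becomes an isomorphism is the Atiyah--Bott/Borel localization theorem, the same input the paper already uses for $\iota_{*}$ over $\mathfrak{t}_{reg}$ in (\ref{iota}).

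With the maps so identified, the essential claim is the commutativity of the outer rectangle: that $\mathrm{loc}(c\cap\sigma)=\mathrm{loc}(c)\cap\mathrm{loc}(\sigma)$ for $c\in H^{*}_{T}(X)$ and $\sigma\in H^{T}_{*}(X)$. After localization write $\sigma=i_{*}(\tau)$ with $\tau=(i_{*})^{-1}(\sigma)$. The projection formula for the closed embedding $i$ gives
$$i_{*}\bigl(i^{*}(c)\cap\tau\bigr)=c\cap i_{*}(\tau),$$
and applying $(i_{*})^{-1}$ yields $(i_{*})^{-1}(c\cap\sigma)=i^{*}(c)\cap(i_{*})^{-1}(\sigma)$, which is exactly the desired identity. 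This handles the top square. The middle-to-bottom square commutes for formal reasons: its vertical maps are the identity on $H^{*}_{T}(X^{T})$ and the inclusion $H^{T}_{*}(X^{T})\hookrightarrow\mathcal{H}^{T}_{*}(X^{T})$ coming from inverting the parameters in the coefficient ring $H^{*}_{T}(pt)$; since the cap product is $H^{*}_{T}(pt)$-bilinear it extends uniquely and compatibly across this localization of the coefficients.

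The step I expect to carry the real weight is foundational rather than computational: verifying that the homology localization map in this setting is indeed $(i_{*})^{-1}$ for the Gysin map of $i$, and that the projection formula is available for equivariant Borel--Moore homology with the cap product used here. Both are standard in equivariant localization theory, so once they are in place the lemma is a one-line consequence of the projection formula, with no new computation required. This is why the proof can reasonably be recorded as a formal matter, provided one unwinds what $\mathrm{loc}$ means on each of the two factors.
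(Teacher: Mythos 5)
Your proposal is correct, but note that the paper contains no proof of this lemma at all: it is introduced with ``we recall a lemma in the localization theory of equivariant homology theory'' and stated as a known fact. What you have written is precisely the standard argument that the paper implicitly invokes, so there is nothing to compare against except to check your argument on its own terms, and it does hold up: identifying $\mathrm{loc}$ on cohomology with $i^{*}$ and on homology with $(i_{*})^{-1}$ (the latter existing only after inverting the equivariant parameters, by the localization theorem --- the same input the paper uses for the map $\iota_{*}$ in (\ref{iota})), the desired compatibility $\mathrm{loc}(c\cap\sigma)=\mathrm{loc}(c)\cap\mathrm{loc}(\sigma)$ is exactly the projection formula $i_{*}\bigl(i^{*}(c)\cap\tau\bigr)=c\cap i_{*}(\tau)$ for the proper (closed) embedding $i$, read through $(i_{*})^{-1}$; and the passage to the bottom row is $H^{*}_{T}(pt)$-bilinearity of the cap product. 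One bookkeeping caveat, which you half-acknowledge: as drawn, the diagram asserts a map $H^{T}_{*}(X)\to H^{T}_{*}(X^{T})$ into the \emph{non-localized} group, but $(i_{*})^{-1}$ does not exist before localization, so in general only the outer rectangle (with target $\mathcal{H}^{T}_{*}(X^{T})$) is a well-posed statement; your projection-formula computation proves commutativity of that outer rectangle, which is also the only form in which the paper uses the lemma (in lemma \ref{commute for module loc} the localization map lands in $\bigoplus_{\chi}\mathcal{H}^{T}_{*}(t^{\chi})$). So your claim that the argument ``handles the top square'' should really be phrased as handling the outer rectangle; with that adjustment the proof is complete and is the right one to record in place of the paper's bare citation.
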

Now we state a lemma about the decomposition of the two modules in \ref{matching modules} after localizations.
\begin{lemma}\label{commute for module loc}
The following diagram commutes\\
\begin{tikzcd}
 \Gamma(C^{\alpha},(q^{\alpha})^{*}\mathcal{I}^{\alpha,\beta})\arrow[r,"rel"{above},"\cong"{below}] \arrow[d,"loc"] & H_{*}^{T}(\mathcal{R}_{\varpi_{\beta}})\arrow[d,"loc"]\\
 \bigoplus_{S,|S|=\beta}\Gamma(C_{reg}^{\alpha}, L^{\alpha,S}_{reg})\arrow[r,"rel"{above},"\cong"{below}]& \bigoplus_{\chi\in W\varpi_{\beta}} \mathcal{H}_{*}^{T}(t^{\chi}),
 \\    
\end{tikzcd}\\
where each term is considered as a module over the ring corresponding to the same place in the diagram in lemma \ref{iso of ring}. Moreover, when we decompose the terms on the bottom as $\mathcal{O}(C^{\alpha}_{reg})$(resp. $\mathcal{H}^{T}_{*}(pt)$)-modules, the decomposition is compatible with $rel$, i.e. it maps the $S$-summand to the $\chi(S)$-summand. 
\end{lemma}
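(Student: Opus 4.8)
The plan is to reduce the statement to the compatibilities already recorded in Lemma \ref{iso of ring}, Lemma \ref{module loc structrue}, and Lemma \ref{homology module compatible}. Both objects in the top row are free of rank one over their coefficient rings: $\Gamma(C^{\alpha},(q^{\alpha})^{*}\mathcal{I}^{\alpha,\beta})$ is generated by $u^{\beta}\otimes 1$ over $\mathcal{O}((q^{\alpha})^{*}Gr^{\beta}(\mathcal{T}^{\alpha}))$, and $H_{*}^{T}(\mathcal{R}_{\varpi_{\beta}})$ is generated by $[\mathcal{R}_{\varpi_{\beta}}]$ over $H^{*}_{T}(\mathcal{R}_{\varpi_{\beta}})$. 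Write $\phi$ for the upper ring isomorphism of Lemma \ref{iso of ring}; by (\ref{matching modules}) the map $rel$ is the module isomorphism covering $\phi$ that sends the first generator to the second, so it is $\phi$-semilinear, i.e. $rel(r\cdot m)=\phi(r)\cdot rel(m)$. After localization the coefficient rings become $\bigoplus_{S}\mathcal{O}(C^{\alpha}_{reg})$ and $\bigoplus_{\chi}\mathcal{H}^{*}_{T}(t^{\chi})$, and by the lower horizontal isomorphism $\bar\phi$ of Lemma \ref{iso of ring} these are identified so that the $S$-summand is carried onto the $\chi(S)$-summand; the content of that lemma is exactly that the square $loc\circ\phi=\bar\phi\circ loc$ commutes.

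First I would define the bottom arrow $rel$ as the unique $\bar\phi$-semilinear module isomorphism sending $loc(u^{\beta}\otimes 1)$ to $loc([\mathcal{R}_{\varpi_{\beta}}])$; this exists and is unique because both localized modules are free of rank one over the (identified) localized rings. With this definition the commutativity of the square is a formal consequence of the three inputs above. Concretely, writing an arbitrary section as $f\cdot(u^{\beta}\otimes 1)$, one has $loc(rel(f\cdot(u^{\beta}\otimes 1)))=loc(\phi(f)\cdot[\mathcal{R}_{\varpi_{\beta}}])=\bar\phi(loc(f))\cdot loc([\mathcal{R}_{\varpi_{\beta}}])$ using Lemma \ref{homology module compatible}, while $rel(loc(f\cdot(u^{\beta}\otimes 1)))=rel(loc(f)\cdot loc(u^{\beta}\otimes 1))=\bar\phi(loc(f))\cdot loc([\mathcal{R}_{\varpi_{\beta}}])$ using Lemma \ref{module loc structrue} together with the definition of the bottom $rel$; the two coincide.

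For the summand-compatibility I would argue that, since the bottom $rel$ is $\bar\phi$-semilinear and $\bar\phi$ carries the $S$-summand $\mathcal{O}(C^{\alpha}_{reg})$ onto the $\chi(S)$-summand $\mathcal{H}^{*}_{T}(t^{\chi(S)})$—this is precisely the bijection $S\leftrightarrow\chi(S)$ set up in Section \ref{local basis of coloumb}—the direct-sum decompositions over $S$ and over $\chi$ are matched summand to summand. Equivalently, by their very definitions the $S$-component of $loc(u^{\beta}\otimes 1)$ is the local basis element $s^{S}$ (Section \ref{local basis of zas}) and the $\chi$-component of $loc([\mathcal{R}_{\varpi_{\beta}}])$ is $x^{\chi}$ (the map (\ref{inj})), so the bottom $rel$ identifies $s^{S}$ with $x^{\chi(S)}$ up to the identified coefficient rings.

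The only substantive inputs are the already-proven Lemma \ref{iso of ring} and the two localization-compatibility lemmas; beyond these the argument is diagram-chasing over rank-one free modules, so I expect no genuine obstacle. The single point requiring care is keeping the indexing consistent, namely verifying that the localized generators decompose into the named local bases $s^{S}$ and $x^{\chi}$ and that the two indexings are matched by $S\mapsto\chi(S)$, which is immediate from the definitions of those bases.
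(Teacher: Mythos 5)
Your proposal is correct and takes essentially the same route as the paper, whose proof is simply the one-line citation of Lemmas \ref{module loc structrue} and \ref{homology module compatible}; your write-up is a careful unpacking of exactly that reduction (semilinearity of $rel$ over the ring isomorphism of Lemma \ref{iso of ring}, plus the two localization-compatibility squares, plus the $S\leftrightarrow\chi(S)$ matching of summands).
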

\begin{proof}
Follows from lemmas \ref{module loc structrue} and \ref{homology module compatible}.   
\end{proof}

\begin{lemma}\label{match local}
Restricting $rel$ to $C_{reg}^{\alpha}$, 
the basis $s^S$ (section \ref{local basis of zas}) of $\Gamma(C_{reg}^{\alpha}, L^{\alpha,S}_{reg})$ goes to the basis $x^{\chi(S)}$ (section \ref{local basis of coloumb}, after (\ref{inj})) of $\mathcal{H}_{*}^{T}(t^{\chi})$.
\end{lemma}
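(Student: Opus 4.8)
The plan is to derive the lemma directly from the commuting square of Lemma \ref{commute for module loc}, avoiding any head-on comparison of the coefficient $l(S)$ with the Euler factor $Eu_{\chi(S)}$. The crucial observation is that both distinguished local bases arise as the localization components of a single global class that $rel$ already matches: on the Zastava side $s^{S}$ is by construction the $S$-summand of the restriction $loc(u^{\beta}\otimes 1)=s^{\beta}|_{reg}\otimes 1$ (section \ref{local basis of zas}), while on the Coulomb side $x^{\chi}$ is by construction the $\chi$-summand of $loc([\mathcal{R}_{\varpi_{\beta}}])$ (the map (\ref{inj}), together with Lemma \ref{euler factor}). Thus it should suffice to transport the single identity $rel(u^{\beta}\otimes 1)=[\mathcal{R}_{\varpi_{\beta}}]$ from (\ref{matching modules}) through localization.

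Concretely, I would proceed in three steps. First, record the two decompositions
$$loc(u^{\beta}\otimes 1)=\sum_{S,\,|S|=\beta} s^{S}, \qquad loc([\mathcal{R}_{\varpi_{\beta}}])=\sum_{\chi\in W\varpi_{\beta}} x^{\chi},$$
each of which is immediate from the relevant definition. Second, apply commutativity of the diagram in Lemma \ref{commute for module loc} to the element $u^{\beta}\otimes 1$: going down-then-across sends it to $rel\bigl(\sum_{S} s^{S}\bigr)=\sum_{S} rel(s^{S})$, whereas going across-then-down sends it to $loc([\mathcal{R}_{\varpi_{\beta}}])=\sum_{\chi} x^{\chi}$, so that $\sum_{S} rel(s^{S})=\sum_{\chi} x^{\chi}$. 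Third, invoke the \emph{moreover} clause of Lemma \ref{commute for module loc}, which states that the localized $rel$ is block diagonal for the two summand decompositions and carries the $S$-summand isomorphically onto the $\chi(S)$-summand. Matching the $\chi(S)$-component on both sides of the previous identity then forces $rel(s^{S})=x^{\chi(S)}$, with no room for a spurious unit scalar.

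The only genuine content needed is the block-diagonality asserted in Lemma \ref{commute for module loc}, i.e. that the ring isomorphism of Lemma \ref{iso of ring} intertwines the restriction $res$ to the $\mathcal{O}(C^{\alpha}_{reg})$-summands indexed by $S$ with the localization $loc$ to the fixed points $t^{\chi}$ indexed by $\chi$, under the bijection $S\mapsto\chi(S)$; this is exactly the compatibility packaged in Lemmas \ref{module loc structrue} and \ref{homology module compatible}, and it is where the two localization theories must be seen to align. Granting that, the present statement is a formal consequence. I expect the main temptation — and the wrong turn — to be an attempt to verify $rel(s^{S})=x^{\chi(S)}$ by computing both sides explicitly and checking that the positive powers of $(a^{i}_{l}-a^{i'}_{j})$ collected in $l(S)$ reproduce the inverse Euler class $(Eu_{\chi(S)})^{-1}$; these expressions do not agree termwise and coincide only after unwinding the trivializations of the two rank-one modules, so the diagram chase above is the economical route.
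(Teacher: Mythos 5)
Your proposal is correct and is essentially the paper's own proof: both transport the single global identity $rel(u^{\beta}\otimes 1)=[\mathcal{R}_{\varpi_{\beta}}]$ through localization and then use the component-matching (block-diagonality) clause of Lemma \ref{commute for module loc} to equate the $S$- and $\chi(S)$-summands. Your closing remark correctly identifies the wrong turn the paper also avoids, namely a direct termwise comparison of $l(S)$ with $(Eu_{\chi(S)})^{-1}$.
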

\begin{proof}
From $$rel(u^{\beta}\otimes 1)=[\mathcal{R}_{\varpi_{\beta}}],$$ decompose 
$u^{\beta}\otimes 1$ and $[\mathcal{R}_{\varpi_{\beta}}]$ into $S$ (or $\chi(S)$ resp.)-component. Since $s^S$ and $x^{\chi(S)}$ are the $S$-component of $u^{\beta}\otimes 1$ and  $[\mathcal{R}_{\varpi_{\beta}}]$ respectively,
by lemma \ref{commute for module loc}, $$rel(s^S)=x^{\chi(S)}.$$
\end{proof}

\subsection{A flatness lemma} \label{lemma}

\begin{lemma}\label{closure lemma}
Let $S$ be a base space and $U\subset S$ an open dense subspace. Let $Y=\mathbb{P}^n_{S}$ over $S$.
Let $X$ over $U$ be a closed subscheme in $Y_U=\mathbb{P}^n_{U}$ and $\overline{X}$ be its closure in $Y.$
Let $\mathcal{X}$ be a closed subscheme flat over $S$.
If $\mathcal{X}|_U=X$, we have $\mathcal{X}=\overline{X}.$    
\end{lemma}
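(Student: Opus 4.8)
The plan is to realize $\overline{X}$ and $\mathcal{X}$ as two closed subschemes of $Y=\mathbb{P}^n_S$ that coincide over the dense open $U$, and then to use the flatness of $\mathcal{X}$ to rule out any discrepancy concentrated over $S\setminus U$. First I would record the easy inclusion $\overline{X}\subseteq \mathcal{X}$. Since $\mathcal{X}|_U=X$, the subscheme $X$ is an open subscheme of $\mathcal{X}$, namely the preimage of $Y_U$, so the locally closed immersion $X\hookrightarrow Y$ factors through the closed subscheme $\mathcal{X}\hookrightarrow Y$. By the universal property of the scheme-theoretic closure, $\overline{X}$ is the smallest closed subscheme of $Y$ through which $X\to Y$ factors, and hence $\overline{X}\subseteq \mathcal{X}$ as closed subschemes of $Y$. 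It then remains to show that the canonical surjection $\mathcal{O}_{\mathcal{X}}\twoheadrightarrow \mathcal{O}_{\overline{X}}$ is an isomorphism, i.e. that its kernel, the ideal sheaf $\mathcal{K}=\mathcal{I}_{\overline{X}\subset\mathcal{X}}$, vanishes.

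Next I would pin down the support of $\mathcal{K}$. Restricting the closure $\overline{X}$ back to $U$ recovers $X=\mathcal{X}|_U$, so $\mathcal{K}|_U=0$ and $\mathrm{Supp}(\mathcal{K})\subseteq f^{-1}(S\setminus U)$, where $f\colon \mathcal{X}\to S$ is the structure map. The key input is then a standard consequence of flatness: since $\mathcal{O}_{\mathcal{X}}$ is flat over $S$, every associated point of $\mathcal{O}_{\mathcal{X}}$ maps to an associated point of $\mathcal{O}_S$. For $S$ reduced — which is the case in our application, where $S=C^{\alpha}$ is a smooth affine space — the associated points of $\mathcal{O}_S$ are the generic points of its irreducible components, all of which lie in the dense open $U$. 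Hence $\mathrm{Ass}(\mathcal{O}_{\mathcal{X}})\subseteq f^{-1}(U)$. As $\mathcal{K}$ is a coherent subsheaf of $\mathcal{O}_{\mathcal{X}}$, we have $\mathrm{Ass}(\mathcal{K})\subseteq \mathrm{Ass}(\mathcal{O}_{\mathcal{X}})\cap \mathrm{Supp}(\mathcal{K})\subseteq f^{-1}(U)\cap f^{-1}(S\setminus U)=\emptyset$, and a coherent sheaf with empty assassin on a Noetherian scheme is zero. Therefore $\mathcal{K}=0$ and $\overline{X}=\mathcal{X}$.

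The only genuine content sits in this flatness step, so that is where I would be most careful, and it is the main obstacle. If one prefers to avoid invoking the relative assassin, I would instead reduce to the affine integral case: cover $Y$ by charts $\mathbb{A}^n_S$, localize $S$ at a minimal prime so it becomes $\mathrm{Spec}$ of a domain $R$ with fraction field $K$, and write $\mathcal{X}$ on a chart as $\mathrm{Spec}(R[t_1,\dots,t_n]/J)$. Over a domain, flatness of $R[t]/J$ is exactly torsion-freeness, while the scheme-theoretic closure of $X=\mathcal{X}|_U$ is cut out by the saturation $JK[t]\cap R[t]$, using that $U$ contains the generic point of $S$. Torsion-freeness says precisely that $J=JK[t]\cap R[t]$, i.e. $J$ is already saturated, which gives $\overline{X}=\mathcal{X}$ on each chart and hence globally. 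Either way, the essential phenomenon — and the sole place flatness enters — is that a family flat over a reduced base has no associated, and in particular no embedded, components lying over the complement of a dense open, which is exactly what forbids $\mathcal{X}$ from carrying extra scheme structure over $S\setminus U$ beyond that of the closure. I would remark that some hypothesis of this type on $S$ (reducedness, or more generally $U\supseteq \mathrm{Ass}(S)$) is really needed, and that it is harmless here since in every application $S$ is the integral scheme $C^{\alpha}$.
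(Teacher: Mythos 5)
Your proof is correct, and it takes a genuinely different route from the paper's. The paper argues through the projective structure: since $\mathcal{X}$ is flat and projective over $S$, the pushforwards $\mathcal{O}(n)_{\mathcal{X}/S}$ are locally free for $n\gg 0$, and the surjection $\mathcal{O}(n)_{\mathcal{X}/S}\twoheadrightarrow \mathcal{O}(n)_{\overline{X}/S}$ is shown to be injective because a section of a locally free sheaf vanishing on the dense open $U$ vanishes; agreement of these graded pieces for $n\gg 0$ then forces $\mathcal{X}=\overline{X}$. You instead work locally on $\mathcal{X}$ with the ideal sheaf $\mathcal{K}=\mathcal{I}_{\overline{X}\subset\mathcal{X}}$, using that flatness over a reduced base confines $\mathrm{Ass}(\mathcal{O}_{\mathcal{X}})$ to lie over $\mathrm{Ass}(\mathcal{O}_S)\subset U$, so that $\mathcal{K}$, being supported over $S\setminus U$, has empty assassin and must vanish (equivalently, in your affine reduction, flatness over a domain is torsion-freeness, so the defining ideal is already saturated). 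Your argument buys generality and transparency: it never uses projectivity of $Y$ over $S$ or cohomology-and-base-change, and it isolates exactly where flatness enters. It also makes explicit a hypothesis the paper leaves implicit: both proofs need $\mathrm{Ass}(\mathcal{O}_S)\subseteq U$ (e.g.\ $S$ reduced) — the paper's step ``$s|_U=0$ and $s$ is a section of a locally free sheaf, so $s=0$'' fails over a base with embedded points outside $U$ — and you correctly note this is harmless since $S=C^{\alpha}$ is integral in the application. The paper's approach, in exchange, stays entirely within the language of the graded coordinate rings already in play in the surrounding argument.
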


\begin{proof}
Since $\mathcal{X}$ is closed in $Y$, $\overline{X}\subset \mathcal{X}$.
So for $n$ big enough, we have 
$$ \mathcal{O}(n)_{\mathcal{X}/S}\twoheadrightarrow
\mathcal{O}(n)_{\overline{X}/S}.$$
Since $\mathcal{X}$ is flat over $S$, the sheaf $\mathcal{O}(n)_{\mathcal{X}/S}$ is locally free for $n$ big enough.
Now we claim the composition 
$$ \mathcal{O}(n)_{\mathcal{X}/S}\xrightarrow[]{}
\mathcal{O}(n)_{\overline{X}/S}\xrightarrow[]{}\mathcal{O}(n)_{X/S}$$
is injective. Take a section $s$ in the kernel. By the assumption $\mathcal{X}|_{U}=X$, $s|U=0$ but $s$ is a section in the locally free sheaf so it must be $0$.
Hence $$ \mathcal{O}(n)_{\mathcal{X}/S}\xrightarrow[]{}\mathcal{O}(n)_{\overline{X}/S}$$ is also injective so it is a isomorphism for $n>>0$. This implies $\mathcal{X}=\overline{X}.$
\end{proof}    

\subsection{Theorem \ref{main} reduces to comparison on the regular part}
\begin{lemma}\label{local iso}
The dotted arrow exists as an isomorphism, i.e. the isomorphism $rel$ restricts to an isomorphism for the two embeddings.

\begin{tikzcd}
   (q^{\alpha})^{*}Z_{reg}^{\alpha}\arrow[d,dashed]{rel_{reg}}\arrow[r,hook] & (q^{\alpha})^{*}Z^{\alpha}\arrow[r,hook,"i_{\alpha}"] & \mathbb{P}_{C^{\alpha}}((q^{\alpha})^{*}V^{\alpha})\arrow[d, "\text{rel}"', "\simeq"]{rel}\\
    (q^{\alpha})^{*}\mathbf{M}_{reg}^{\alpha} \arrow[r,hook] & (q^{\alpha})^{*}\mathbf{M}^{\alpha} \arrow[r,hook,"j_\alpha"]  & \mathbb{P}_{C^{\alpha}}(H_{*}^{T}(\mathcal{R}^{+}_{\leq 1})^{*}) 
\end{tikzcd}
\end{lemma}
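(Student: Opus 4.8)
The plan is to prove the lemma by comparing homogeneous coordinate rings over the regular locus $C^{\alpha}_{reg}$. Both $i_{\alpha}$ and $j_{\alpha}$ realize their sources as closed subschemes of projective bundles whose ambient total spaces are identified by the module isomorphism $rel$ of (\ref{matching modules}); under the convention of section \ref{compute} the coordinate ring of the Zastava side is $A_{0,reg}/\mathbf{I}_{reg}$, generated over $\mathcal{O}(C^{\alpha}_{reg})$ in degree one by the local basis $\{s^{S}\}_{S\subset S^{\alpha}}$ (section \ref{local basis of zas}), while the Coulomb side is $\mathrm{Proj}$ of the localized Rees algebra $\bigoplus_{m}\mathcal{H}^{T_{\mathcal{O}}}((\mathcal{R}_{T,N_T})^{+}_{\leq m})t^{m}$, which by Lemma \ref{local generators} and Corollary \ref{generators x chi} is generated in degree one by $\{x^{\chi(S)}\}_{S\subset S^{\alpha}}$. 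By Lemma \ref{match local}, $rel(s^{S})=x^{\chi(S)}$, so $rel$ matches the two systems of degree-one generators; hence it suffices to show that $rel$ carries the relation ideal $\mathbf{I}_{reg}$ onto the relation ideal of the Coulomb Rees algebra.

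On the Zastava side the relations are given by Lemma \ref{ideal in local}, namely
$$l(A\cup B)\,l(A\cap B)\,s^{A}s^{B}=l(A)\,l(B)\,s^{A\cup B}s^{A\cap B},\qquad A,B\subset S^{\alpha}.$$
On the Coulomb side I would compute the matching degree-two products directly. Writing $x^{\chi}=(Eu_{\chi})^{-1}r^{\chi}$ (Lemma \ref{euler factor}) and using the multiplication rule (\ref{mult}) of Theorem \ref{multi for r^A}, one gets $x^{\chi(A)}x^{\chi(B)}=(Eu_{\chi(A)}Eu_{\chi(B)})^{-1}\,co^{\chi(A),\chi(B)}(N)\,r^{\chi(A)+\chi(B)}$, and likewise for the pair $(A\cup B,A\cap B)$. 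The elementary identity $\chi(A)+\chi(B)=\chi(A\cup B)+\chi(A\cap B)$ (both cocharacters equal $2$ on $A\cap B$, $1$ on the symmetric difference, $0$ elsewhere) shows that these two degree-two elements are proportional, with ratio a product of linear forms $a^{i}_{l}-a^{i'}_{j}$ that are units on $C^{\alpha}_{reg}$; so the Coulomb relations among the generators have exactly the same shape as the Zastava relations.

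The heart of the argument, and the step I expect to be the main obstacle, is verifying that these families of relations coincide under $rel$, i.e. that
$$\frac{(Eu_{\chi(A)}Eu_{\chi(B)})^{-1}\,co^{\chi(A),\chi(B)}(N)}{(Eu_{\chi(A\cup B)}Eu_{\chi(A\cap B)})^{-1}\,co^{\chi(A\cup B),\chi(A\cap B)}(N)}=\frac{l(A)\,l(B)}{l(A\cup B)\,l(A\cap B)}$$
up to a unit on $C^{\alpha}_{reg}$. I would establish this by the multiplicativity of $co^{\lambda,\mu}$ in the representation (Remark \ref{mult prop of C factors}) together with the additivity of the exponents of $l$ in the arrows of $Q$, reducing to a summand-by-summand check for $N=\bigoplus_{i\to j}\mathrm{Hom}(V_i,V_j)$. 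For the trivial summand the factors $co$ are empty and the claim becomes a polynomial identity among the Euler classes $Eu_{\chi}=\prod_{i}\prod_{j\in S(\chi_i),\,l\notin S(\chi_i)}(a^{i}_{j}-a^{i}_{l})$, which reproduces exactly the diagonal contribution $\kappa_{ii}=1$ of $l$; each arrow $i\to j$ then contributes, through $d(\cdot,\cdot)$ evaluated on the $\{0,1\}$-valued pairings $\xi(\chi(A)),\xi(\chi(B))$, precisely the factor recorded by $\kappa_{ij}$ (and a self-loop lowers $\kappa_{ii}$ by one in the same way). This is pure bookkeeping with the $d$-function, but it is where the definition $\kappa=\kappa(Q)$ enters and where an error would be easiest to make.

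Finally, having shown $rel(\mathbf{I}_{reg})$ lies in the Coulomb relation ideal, I would conclude that the induced degree-preserving algebra map from $A_{0,reg}/\mathbf{I}_{reg}$ to the localized Rees algebra is an isomorphism: it is surjective because the degree-one generators are matched by Lemma \ref{match local}, and it is injective by a rank count, since in each degree $m$ both graded pieces are free over $\mathcal{O}(C^{\alpha}_{reg})$ of rank $(m+1)^{|\alpha|}$ (the Coulomb piece has basis $\{r^{\chi}:0\le\chi^{i}_{j}\le m\}$, and the Zastava piece is the space of sections of $\mathcal{O}(m,\dots,m)$ on a product of $|\alpha|$ copies of $\mathbb{P}^{1}$). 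Taking $\mathrm{Proj}$, this produces the dotted arrow $rel_{reg}$ as an isomorphism, which is the assertion of the lemma.
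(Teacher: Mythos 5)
Your proposal follows essentially the same route as the paper: match the degree-one local generators $s^{S}\leftrightarrow x^{\chi(S)}$ via Lemma \ref{match local}, and then verify that the two relation ideals coincide by reducing to the identity comparing $\mathrm{fc}_Q$, the Euler classes $Eu$, and the factors $l$, checked summand-by-summand in $N$ using the multiplicativity of Remark \ref{mult prop of C factors} (the paper's equation (\ref{equation}) and its three-case combinatorial verification). The only points of difference are cosmetic: your closing rank count for injectivity is left implicit in the paper, and your ``up to a unit'' in the key identity should be an exact equality (the paper arranges this by replacing $l$ with the sign-adjusted $l_Q$ depending on the arrow orientations), since $rel$ is already pinned down on generators.
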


Given this lemma, we can prove theorem \ref{main}.
\begin{proof}[Proof of theorem \ref{main}]
 By \cite[lemma 5.3]{braverman2018towards}, $(q^{\alpha})^{*}\mathbf{M}^{\alpha}$ is flat over $C^{\alpha}.$
 Now we apply lemma \ref{closure lemma}.
 Let $S=C^{\alpha}$ and $U=C^{\alpha}_{reg}.$
 Let $X=rel_{reg}((q^{\alpha})^{*}Z^{\alpha}_{reg})$, $\mathcal{X}=(q^{\alpha})^{*}\mathbf{M}^{\alpha}$   
 and $Y=\mathbb{P}_{C^{\alpha}}(H_{*}^{T}(\mathcal{R}^{+}_{\leq 1})^{*}).$
By lemma \ref{local iso}, we have $\mathcal{X}|_{U}=X$ so applying lemma \ref{closure lemma} gives $\mathcal{X}=X$, i.e., $rel((q^{\alpha})^{*}Z)=(q^{\alpha})^{*}\mathbf{M}^{\alpha} .$
 
\end{proof}
\begin{cor}\label{flat}
The local space $Z_{\kappa(Q)}^{\alpha}$ is flat over $\mathcal{H}^{\alpha}$.
\end{cor}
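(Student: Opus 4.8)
The plan is to deduce the flatness of $Z_{\kappa(Q)}^{\alpha}$ over $\mathcal{H}^{\alpha}$ directly from Theorem \ref{main}, combined with the flatness already invoked on the Coulomb-branch side. By Theorem \ref{main} there is a canonical isomorphism $Z_{\kappa(Q)}^{\alpha}\cong \mathbf{M}_{Q}^{\alpha}$ of schemes over $\mathcal{H}^{\alpha}$, so it suffices to show that $\mathbf{M}_{Q}^{\alpha}$ is flat over $\mathcal{H}^{\alpha}$. The main idea is that flatness over the base can be tested after a faithfully flat base change of that base, and the quotient map $q^{\alpha}$ provides exactly such a cover.

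First I would record that $q^{\alpha}\colon C^{\alpha}\to \mathcal{H}^{\alpha}$ is faithfully flat. Indeed $C^{\alpha}=\prod_{i\in I}C^{n_i}$ is smooth, $\mathcal{H}^{\alpha}=\prod_{i\in I}S^{n_i}C$ is smooth, and $q^{\alpha}$ is the finite surjective quotient by $W^{\alpha}=\prod_{i\in I}S_{n_i}$ with zero-dimensional fibres; since $\dim C^{\alpha}=\dim \mathcal{H}^{\alpha}$, miracle flatness (source Cohen--Macaulay, target regular, equidimensional fibres) makes $q^{\alpha}$ flat, and surjectivity upgrades this to faithful flatness.

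Next I would invoke the flatness that was already used inside the proof of Theorem \ref{main}: by \cite[lemma 5.3]{braverman2018towards}, the pullback $(q^{\alpha})^{*}\mathbf{M}_{Q}^{\alpha}$ is flat over $C^{\alpha}$. By faithfully flat descent of flatness — if $S'\to S$ is faithfully flat and $X\times_{S}S'\to S'$ is flat, then $X\to S$ is flat — applied with $S=\mathcal{H}^{\alpha}$, $S'=C^{\alpha}$, and $X=\mathbf{M}_{Q}^{\alpha}$, we conclude that $\mathbf{M}_{Q}^{\alpha}$ is flat over $\mathcal{H}^{\alpha}$. Transporting this across the isomorphism of Theorem \ref{main} shows that $Z_{\kappa(Q)}^{\alpha}$ is flat over $\mathcal{H}^{\alpha}$.

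There is essentially no genuine obstacle here; this is a formal corollary of Theorem \ref{main}. The only two points deserving explicit mention are the standard facts I would cite: that $q^{\alpha}$ is faithfully flat (via miracle flatness) and that flatness descends along a faithfully flat morphism of the base (fppf descent of flatness). Both are routine, so the statement follows immediately once Theorem \ref{main} is available.
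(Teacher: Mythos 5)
Your proposal is correct and follows essentially the same route as the paper: deduce flatness of $(q^{\alpha})^{*}\mathbf{M}^{\alpha}$ over $C^{\alpha}$ from \cite[lemma 5.3]{braverman2018towards} (as used in the proof of Theorem \ref{main}), descend it along the faithfully flat quotient $q^{\alpha}$ to get flatness of $\mathbf{M}^{\alpha}$ over $\mathcal{H}^{\alpha}$, and transport across the isomorphism of Theorem \ref{main}. The only difference is that you make explicit the faithful flatness of $q^{\alpha}$ and the descent step, which the paper leaves implicit.
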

\begin{proof}
 The space $(q^{\alpha})^{*}\mathbf{M}^{\alpha}$ is flat over $C^{\alpha}$ (see the proof of theorem \ref{main}), so $\mathbf{M}^{\alpha}$ is flat over $\mathcal{H}^{\alpha}$. By theorem \ref{main}, $Z_{\kappa(Q)}^{\alpha}\cong  \mathbf{M}^{\alpha}$ is flat over $\mathcal{H}^{\alpha}$. 
\end{proof}
\subsection{Proof of the comparison lemma on the regular part (lemma \ref{local iso})}
\begin{proof}
Recall that $s^S,S\subset S^{\alpha}$  generate the algebra $A_{0,reg}$ (section \ref{local basis of zas}) and 
$x^{\chi(S)}t,S\subset S^{\alpha}$ generate
the localized Rees algebra $\oplus_m \mathcal{H}^{T_{\mathcal{O}}}((\mathcal{R}_{T,N_T})^{+}_{\leq m})t^m$ (Corollary \ref{generators x chi}). It suffices to prove that these satisfy the same relations when we identify $s^S$ with $x^{\chi(S)}$ by lemma \ref{match local}.

By the multiplication formula (\ref{mult}) in theorem \ref{multi for r^A}, we get the multiplication formula in the Rees algebra, (we need to add $t$ on the right hand side)
$$
r^{\lambda}r^{\mu}=\prod^{n}_{i=1} \xi_{i}^{d(\xi_i(\lambda),\xi_i(\mu))}r^{\lambda+\mu}t.$$
Replace the indices $\lambda,\mu$ by $A,B$ and
denote the coefficient before $r^{\lambda+\mu}$ by $\text{fc}_Q(A,B)$, we rewrite the formula as
$$
r^{A}r^{B}=\text{fc}_Q(A,B)r^{\lambda+\mu}t.$$ 
In particular,
$$
r^{A\cup B}r^{A\cap B}=\text{fc}_Q(A\cup B,A\cap B)r^{\lambda+\mu}t.$$ hence the relations in the localized Rees algebra $\oplus_m \mathcal{H}^{T_{\mathcal{O}}}((\mathcal{R}_{T,N_T})^{+}_{\leq m})t^m$ can be written as
$$r^{A}r^{B}=\frac{\text{fc}_Q(A,B)}{\text{fc}_Q(A\cup B,A\cap B)}
r^{A\cup B}r^{A\cap B}.$$
Recall by lemma \ref{euler factor} that 
$$x^{S}=(Eu_S)^{-1} r^{S}$$
so the relations in terms of $x^{S}$ are
$$x^{A}x^{B}=\frac{\text{fc}_Q(A,B)}{\text{fc}_Q(A\cup B,A\cap B)}\frac{Eu(A\cup B)Eu(A\cap B)}{Eu(A)Eu(B)}
x^{A\cup B}x^{A\cap B}.$$
Now we consider the relations in the algebra $A_{0,reg}.$
Recall in (\ref{local fac}),  
we defined $l(\beta)$ depending on $\kappa.$ 
Now we set $s^{\beta}$ by the same formula as in (\ref{def local  basis}) replacing $l(\beta)$ by 
$$l_{Q}(\beta)=\prod_{a\in E \text{ such that } s(a)=i,t(a)=i',i\neq i' l\in (S_{\beta})_i,j\in (S_{\beta})_{i'}} (x^{i}_l-x^{i'}_j)^{\kappa_{ii'}} \prod_{i\in I,l\neq j,l,j\in(S_{\beta})_i} (x^{i}_l-x^{i}_j)^{\kappa_{ii}},$$
where for an arrow $a\in E$, $s(a)$ is the source and $t(a)$ is the target.
The difference between $l(\beta)$ and $l_{Q}(\beta)$ is possibly a sign which depends on the directions of the arrows in $Q$. 
So accordingly,
in the formula (\ref{induced def of local}) of $s^S$, we replace $l(S)$ by 
$$l_Q(S)=\prod_{a\in E \text{ such that } s(a)=i,t(a)=i',i\neq i', l\in S_i,j\in S_{i'}} (x^{i}_l-x^{i'}_j)^{\kappa_{ii'}} \prod_{i\in I,l\neq j,l,j\in S_i} (x^{i}_l-x^{i}_j)^{\kappa_{ii}}.$$

The relations in the algebra $A_{0,reg}$ for $s^S,S\subset S^{\alpha}$ are 
$$s^A s^B=\frac{l_Q(A)l_Q(B)}{l_Q(A\cup B)l_Q(A\cap B)}s^{A\cup B} s^{A\cap B}.$$ 
Now to check $s^S,S\subset S^{\alpha}$ and 
$x^{\chi(S)}t,S\subset S^{\alpha}$ satisfy the same relations, it suffices to show that 
\begin{equation}\label{equation}
    \frac{\text{fc}_Q(A,B)}{\text{fc}_Q(A\cup B,A\cap B)}\frac{Eu(A\cup B)Eu(A\cap B)}{Eu(A)Eu(B)}=
\frac{l_Q(A)l_Q(B)}{l_Q(A\cup B)l_Q(A\cap B)}.
\end{equation}
This will be an elementary combinatorial calculation.
For a general quiver $Q,$ denote by $Q_s$ the quiver removing all arrows between different vertices and $Q_b$ the removing all arrows.
\begin{itemize}  
\item 
We first prove the case where $Q$ has no edges, i.e. $Q=Q_b.$
In this case $N=0$, so $$\frac{\text{fc}_{Q_b}(A,B)}{\text{fc}_{Q_b}(A\cup B,A\cap B)}=1.$$
 It suffices to show that 
 \begin{equation}\label{combi equation}
 \frac{Eu(A\cup B)Eu(A\cap B)}{Eu(A)Eu(B)}=
\frac{l_{Q_b}(A)l_{Q_b}(B)}{l_{Q_b}(A\cup B)l_{Q_b}(A\cap B)}.
\end{equation}
 From the description of $Eu(S)$ and $l_{Q_b}(S)$, we can assume $I$ is one point. Let $W:=\{1,2,\cdots,n\}$.
Let $A\cap B=C ,A \setminus  B=D,B\setminus A=E, W\setminus (A\cup B) =F$ so $A=C\cup D, B=D\cup E,A\cup B=C\cup D\cup E,A\cap B=D,W=C\cup D\cup E\cup F.$
By lemma \ref{euler factor}(b),
we have 
$$Eu^{-1}(A)=\prod_{l\in A,j\in W\setminus A}(a_l-a_j)=\prod_{(l,j)\in A\times (W\setminus A)}(a_l-a_j).$$
Here, we rewrite
the index set in the second equality as a product $A\times (W\setminus A)$.
It is clear to show \ref{combi equation}, it suffices to compare the index set of the products.
We write  
$$X\overset{\text{supp}}{=}S$$ to mean $$X=\prod_{(l,j)\in S}(a_l-a_j).$$
By this notation,  
$$Eu^{-1}(A)\overset{\text{supp}}{=} A\times (W\setminus A)=(C\cup D)\times (E\cup F)=(C\times E) \cup (C\times F)\cup (D\times E) \cup (D\times F).$$

$$Eu^{-1}(B)=\prod_{l\in B,j\in W\setminus B}(a_l-a_j)\overset{\text{supp}}{=} 
(D\times C)\cup (D\times F)\cup (E\times C)\cup (E\times F).$$
$$Eu^{-1}(A\cup B)=\prod_{l\in A\cup B,j\in W \setminus (A\cup B)}(a_l-a_j)\overset{\text{supp}}{=} 
(C\times F)\cup (D\times F)\cup (E\times F)$$\\
$$Eu^{-1}(A\cap B)=\prod_{l\in A\cap B,j\in  W \setminus (A\cap B)}(a_l-a_j)\overset{\text{supp}}{=} 
(D\times C)\cup (D\times E)\cup (D\times F).$$
So 
$$\frac{Eu^{-1}(A)Eu^{-1}(B)}{Eu^{-1}(A\cup B)Eu^{-1}(a\cap B)}\overset{\text{supp}}{=} (C\times E)\cup(E\times C).$$\\
For $l_{Q_b}$, we have 
$$l_{Q_b}(A)=\prod_{l\in A,j\in A}(a_l-a_j)\overset{\text{supp}}{=} \{(C\times C)\cup(C\times D)\cup(D\times C)\cup(D\times D)\}.$$
$$l_{Q_b}(B)=\prod_{l\in B,j\in B}(a_l-a_j)\overset{\text{supp}}{=} \{(D\times D)\cup(D\times E)\cup(E\times D)\cup(E\times E)\}.$$
$$l_{Q_b}(A\cup B)=\prod_{l\in A\cup B,j\in A\cup B}(a_l-a_j)\overset{\text{supp}}{=} \\$$
$$
\{(C\times C)\cup(C\times  D)\cup(C\times E)\cup(D\times C)\cup(D\times D)\cup(D\times E)\cup(E\times C)\cup(E\times D)\cup(E\times E)\}.$$
$$l_{Q_b}(A\cap B)=\prod_{l\in A\cap B,j\in A\cap B}(a_l-a_j)\overset{\text{supp}}{=} \{(D\times D)\}.$$
So$$
\frac{l_{Q_b}(A)l_{Q_b}(B)}{l_{Q_b}(A\cup B)l_{Q_b}(A\cap B)}\overset{\text{supp}}{=} (C\times E)\cup(E\times C)$$ and therefore (\ref{combi equation}) holds.
\item 
Now we prove the case where $Q=Q_s$ has only self-edges. Again it is clear we can assume $I$ is one point. 
Denote $$l^d_{Q_s}=\frac{l_{Q_s}}{l_{Q_b}}.$$
Now it suffices to show that 
$$\frac{\text{fc}_{Q_s}(A,B)}{\text{fc}_{Q_s}(A\cup B,A\cap B)}=
\frac{l^d_{Q_s}(A)l^d_{Q_s}(B)}{l^d_{Q_s}(A\cup B)l^d_{Q_s}(A\cap B)}.
$$
By remark \ref{mult prop of C factors}, we can reduce to the case where $Q_s$ has one self-loop and it suffices to prove 
$$\frac{\text{fc}_{Q_s}(A,B)}{\text{fc}_{Q_s}(A\cup B,A\cap B)}\overset{\text{supp}}{=}
(C\times E)\cup(E\times C).$$
Now $N=Hom(V, V)$ is the adjoint representation of $GL(V).$ The characters $\xi_i$ is the set of all roots of $GL(V),$ which are all $(a_l-a_j)$ for $l\neq j,l,j \in  W.$
Compute the pairing between $A,B,A\cup B,A\cap B$ and $(a_l-a_j)$ for $l\neq j,l,j \in  W$ and $A,B$ we get the formula.
\item 
Now we prove the general case.
Denote $l^d_{Q}=\frac{l_Q}{l_{Q_s}}$ and $\text{fc}^d_{Q}=\frac{\text{fc}_{Q}}{\text{fc}_{Q_s}}.$
It suffices to show that 
$$\frac{\text{fc}^d_{Q}(A,B)}{\text{fc}^d_{Q}(A\cup B,A\cap B)}=
\frac{l^d_{Q}(A)l^d_{Q}(B)}{l^d_{Q}(A\cup B)l^d_{Q}(A\cap B)}.
$$
Again by remark \ref{mult prop of C factors}, we can reduce to the case where $Q$ has two vertices and only one arrow $1\xrightarrow[]{}2.$ In this case $N=Hom(V_1, V_2)$.
The characters $\xi_i$ consist of $(a^1_l-a^2_j)$ for all $l\leq l \leq n_1,1\leq j \leq n_2.$
Compute the pairing between $A,B,A\cup B,A\cap B$ and $(a^1_l-a^2_j)$ for all $l\leq l \leq n_1,1\leq j \leq n_2$ and $A,B$ we get the formula.
     \end{itemize}
\end{proof}

\section{Appendix: Proof of lemma \ref{ideal in local} (by Ivan Mirković)}
\subsection{Equations of Segre embeddings}
The locality equations in the discrete range are just the Segre embedding equations of a special type $(\mathbb{P}^1)^n\hookrightarrow \mathbb{P}^{2^n-1}$.
The standard list of Segre equations of general type $\prod_{p\in D}\mathbb{P}(V_p)\hookrightarrow \mathbb{P}(V_D)$, is checked based on the case $\mathbb{P}(A)\times \mathbb{P}(B)\hookrightarrow \mathbb{P}(A\otimes B)$ in \ref{segre}. In our case $(\mathbb{P}^1)^n\hookrightarrow \mathbb{P}^{2^n-1}$ the combinatorics is stated in terms of $Gr(D)\subset \mathbb{N}[D]$ in \ref{special segre}.
\subsubsection{Segre embeddings.}\label{segre}
(i) Case of two factors. A choice of coordinates on $A$ and $B$, $x_a,a\in \mathcal{A}$ and $y_b,b\in \mathcal{B},$ gives coordinates $z_{ab}=x_a\otimes y_b$ on $A\otimes B$. A vector $v\in A\otimes B$ can be thought of as an operator $v:A^{*}\xrightarrow[]{} B$ and $v$ is a pure tensor iff the rank of $v$ is $\leq 1.$ In terms of the matrix $(z_{ab})_{\mathcal{A}\times\mathcal{B}}$ this condition is the vanishing of all of its $2\times 2$ minors $z_{ab}z_{a'b'}-z_{ab'}z_{a'b}.$\\
(ii) Segre embedding  map $\prod_{p\in D}\mathbb{P}(V_p)\xrightarrow[]{} \mathbb{P}(V_D).$ Let $x^p_i,i\in \mathcal{B}_{p},$ be the coordinates on vector spaces $V_p,p\in D,$ so that for $X\subset D$ on $V_X\stackrel{\text{def}}{=} \otimes_{p\in X}V_p.$  
We have coordinates $z_{\beta}^{X}\stackrel{\text{def}}{=} \otimes_{p\in D}x_{\beta_p}^p$ indexed by $\beta\in \mathcal{B}_X\stackrel{\text{def}}{=}\prod_{p\in X}\mathcal{B}_p.$ Then the Segre embedding map $\prod_{p\in D}\mathbb{P}(V_p)\hookrightarrow \mathbb{P}(V_D)$ is given by $z_{\beta}^{D}\mapsto \prod_{p\in D}x_{\beta_p}^p.$
We view $\prod_{p\in D}\mathbb{P}(V_p)$ as the intersection of all $\mathbb{P}(V_X)\times \mathbb{P}(V_Y)\subset \mathbb{P}(V_D)$ over all decomposition $D=X\sqcup Y.$\footnote{The claim $\mathbb{P}(A)\times \mathbb{P}(B\otimes C) \cap_{\mathbb{P}(A\times B\times C)} \mathbb{P}(A\times B)\times \mathbb{P}(C)=\mathbb{P}(A)\times \mathbb{P}(B)\times \mathbb{P}(C) $ means that if a vector $v$ in $V=A\otimes B\otimes C$ is a pure tensor for $A\otimes (B\otimes C)$ and $(A\otimes B)\otimes C$, then it is also a pure tensor for $A\otimes B\otimes C.$ Proof. The assumption is $v=a\otimes \alpha=\gamma \otimes c$ with $a\in A, c\in C,\alpha\in B\otimes C,\gamma\in A\otimes B.$ Choose a basis $c_i$ of $C$ with a dual basis $c^i$ and $c=c_0$. For $i\neq 0$ we have $c^i\perp c,$ hence $0=\langle v,c^i\rangle=a\otimes b_i$ hence $b_i=0$. So, $v=a\otimes b_0\otimes c.$ \qed} So, by (i) equations are the minors $\begin{vmatrix}
z_{\alpha'\beta'} & z_{\alpha'\beta''}  \\
z_{\alpha''\beta'}  & z_{\alpha''\beta''} 
\end{vmatrix}$ indexed by data $D=X\cup Y$ and $\alpha',\alpha''\in \mathcal{B}_X, \beta',\beta''\in \mathcal{B}_Y$ (so that $\alpha'\beta'\in \mathcal{B}_D$ etc.).
\begin{rmk}
We can write these equations as $z_\phi z_\psi-z_{\phi^C} z_{\psi^C}$ for $(\phi,\psi,C)\in \mathcal{B}_D^2 \times Gr(D);$ here $C\in Gr(D)$ acts on $\mathcal{B}_{D}^2$ by the unique involution $\sigma_{C}(\phi,\psi)=(\phi^{C},\psi^{C})$ that exchanges the values on $C$, i.e., $\phi^{C}_{p}=\psi_p$ for $p\in C$ and $\psi^{C}_{p}=\psi_p$ for $p\notin C$ (and the same for $\psi^C$).
So, the equation require invariance of products under $Gr(D)$ as a $D$-degeneration of commutativity which is the case $C=D$ as $(\phi^{D},\psi^{D})=(\phi,\psi).$ (However, this $Gr(D)$ action is only defined on a chosen basis.)
\end{rmk}
\subsubsection{The case of $(\mathbb{P}^{1})^{n}\hookrightarrow \mathbb{P}^{2^n-1}.$}\label{special segre} Now we have identifications $\mathcal{B}_p\xleftarrow[\cong]{}\{\emptyset,p\}=Gr(p)$ and therefore $\mathcal{B}_D\xleftarrow[\cong]{} Gr(D)$ (by $\phi \mapsfrom  X$ for $\phi_{p}=\delta_{p\in X}).$  We also embed $Gr(D)$ into a monoid $\mathbb{N}[D].$ (We often denote $z_X$ by $1_X$.)
\begin{cor}
  The Segre equations are now $z_X z_Y=z_U z_V$, indexed by all $X,Y,U,V\subset D$ with $X+Y=U+V \in \mathbb{N}[D].$ 
  \end{cor}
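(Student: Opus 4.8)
The plan is to recognize the Corollary as a reindexing of the Segre equations from the Remark following \ref{segre}(ii), transported along the bijection $\mathcal{B}_D \cong Gr(D)$ set up in \ref{special segre}. Under that bijection a function $\phi \in \mathcal{B}_D$ corresponds to the subset $X = \{p \in D : \phi_p = p\} \subset D$, and the coordinate $z_\phi$ becomes $z_X$. So I would begin by writing $\phi \leftrightarrow X$ and $\psi \leftrightarrow Y$ for the two indices appearing in a generic Segre equation $z_\phi z_\psi - z_{\phi^C} z_{\psi^C}$, and then translate the remaining data $C \in Gr(D)$ and the involution $\sigma_C$ into the language of subsets.

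The first computation is to make $\sigma_C$ explicit on subsets. Since $\sigma_C$ exchanges the $C$-values of $\phi$ and $\psi$, unwinding the definitions gives $\phi^C \leftrightarrow U$ and $\psi^C \leftrightarrow V$ with
\[
U = (X \setminus C) \cup (Y \cap C), \qquad V = (Y \setminus C) \cup (X \cap C),
\]
so that the Segre equation reads literally $z_X z_Y = z_U z_V$. For the forward inclusion I would then check that every such pair satisfies $X + Y = U + V$ in $\mathbb{N}[D]$ by comparing multiplicities at each $p \in D$: for $p \in C$ the sets $U, V$ contribute $[p \in Y] + [p \in X]$, and for $p \notin C$ they contribute $[p \in X] + [p \in Y]$, which in both cases equals the multiplicity of $p$ in $X + Y$. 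Hence every Segre equation has the asserted form.

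The converse inclusion carries the actual content: given arbitrary $X, Y, U, V \subset D$ with $X + Y = U + V$, I must exhibit a $C \in Gr(D)$ realizing $(U, V)$ through the formulas above. The construction is pointwise. The relation $X + Y = U + V$ forces, at each $p$, the equality $[p \in X] + [p \in Y] = [p \in U] + [p \in V]$ of integers in $\{0, 1, 2\}$. When this common value is $0$ or $2$ the memberships of $p$ in $U$ and $V$ are forced and are compatible with either choice, so one may set $p \notin C$; when the value is $1$, both pairs $([p \in X], [p \in Y])$ and $([p \in U], [p \in V])$ lie in $\{(1,0), (0,1)\}$, and placing $p \notin C$ works exactly when they agree while placing $p \in C$ works exactly when they are swapped, one of which must hold. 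Assembling these local choices yields the required $C$. This last case analysis — which amounts to checking that, for fixed $X, Y$, the pairs $(U,V)$ produced as $C$ ranges over $Gr(D)$ exhaust the whole fiber of the addition map $Gr(D)^2 \to \mathbb{N}[D]$ over $X + Y$ — is the only nontrivial point, and it is elementary; with it the two families of equations coincide, proving the Corollary.
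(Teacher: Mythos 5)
Your proof is correct and follows essentially the same route as the paper's: the forward direction by observing that $\sigma_C$ preserves the pointwise sum $\phi_p+\psi_p$, and the converse by constructing $C$ pointwise, which is exactly the paper's choice $C=\{p\in D : X_p\neq U_p\}$ written out as a case analysis. Your version is merely more explicit in translating $\sigma_C$ into the subset formulas $U=(X\setminus C)\cup(Y\cap C)$, $V=(Y\setminus C)\cup(X\cap C)$.
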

  \begin{proof}
   Involution $\sigma_C, C\subset D$ preserve for each $p\in D$ the multiset $\phi_p,\psi_p$ hence also the sum $\phi_p+\psi_p.$ Conversely, if $X+Y=U+V$ then $(U,V)=\sigma_C(X,Y)$ for $C=\{p\in D;X_p\neq U_p\}.$
     \end{proof}
   \begin{rmk}
(0) One has $z_{i,j}z_{\emptyset}=z_{i}z_{j}$ and inductively $z_{X}z_{\emptyset}^{|X|-1}=\prod_{p\in D} z_p.$
(1) By setting $1_{\emptyset}=1$ one obtains an open affine subspace of $\mathbb{P}(V_D)$ with functions $\mathbb{k}[1_B,\emptyset\neq B\subset D]$. Here, locality equations reduce to $z_X=\prod_{p\in D}z_p$ with solutions $\mathbb{A}^D.$
   \end{rmk}

 \bibliographystyle{alpha}
\bibliography{main.bib}

\begin{thebibliography}{FKM20}

\bibitem[AR]{achar_riche_central_sheaves}
Pramod~N. Achar and Simon Riche.
\newblock Central sheaves on affine flag varieties.

\bibitem[BF10]{braverman2010pursuing}
Alexander Braverman and Michael Finkelberg.
\newblock Pursuing the double affine grassmannian {I}: Transversal slices via instantons on {$A_k$}-singularities.
\newblock {\em Duke Math. J.}, 152(arXiv: 0711.2083):175--206, 2010.

\bibitem[BF14]{braverman2014semi}
Alexander Braverman and Michael Finkelberg.
\newblock Semi-infinite schubert varieties and quantum {K}-theory of flag manifolds.
\newblock {\em Journal of the American Mathematical Society}, 27(4):1147--1168, 2014.

\bibitem[BFG06]{braverman2006uhlenbeck}
Alexander Braverman, Michael Finkelberg, and Dennis Gaitsgory.
\newblock Uhlenbeck spaces via affine lie algebras.
\newblock In {\em The unity of mathematics: In Honor of the Ninetieth Birthday of IM Gelfand}, pages 17--135. Springer, 2006.

\bibitem[BFN18]{braverman2018towards}
Alexander Braverman, Michael Finkelberg, and Hiraku Nakajima.
\newblock Towards a mathematical definition of coulomb branches of $3$-dimensional $n=4$ gauge theories, {II}.
\newblock {\em Advances in Theoretical and Mathematical Physics}, 22(5):1071--1147, 2018.

\bibitem[BFN19]{braverman2019coulomb}
Alexander Braverman, Michael Finkelberg, and Hiraku Nakajima.
\newblock Coulomb branches of $3d$ $n=4$ quiver gauge theories and slices in the affine {G}rassmannian.
\newblock {\em ADV. THEOR. MATH. PHYS}, 23(1):75--166, 2019.

\bibitem[Don]{dong}
Zhijie Dong.
\newblock On the relations of the generators of the homogenous coordinate ring of zastava for matrices.

\bibitem[FKM20]{finkelberg2020drinfeld}
Michael Finkelberg, Vasily Krylov, and Ivan Mirkovi{\'c}.
\newblock Drinfeld--gaitsgory--vinberg interpolation grassmannian and geometric satake equivalence.
\newblock {\em Journal of Topology}, 13(2):683--729, 2020.

\bibitem[FM99]{finkelberg1997semiinfinite}
Michael Finkelberg and Ivan Mirkovi{\'c}.
\newblock Semi-infinite flags {I}. {C}ase of global curve $\mathbb{P}^1$.
\newblock {\em Differential Topology, Infinite-Dimensional Lie Algebras, and Applications}, pages 81--112, 1999.

\bibitem[Mir23]{mirkovic}
Ivan Mirkovi{\'c}.
\newblock Lattice vertex algebras and {L}oop {G}rassmannians.
\newblock {\em Transformation Groups}, 28(3):1221--1243, 2023.

\bibitem[MW24]{muthiah2022fundamental}
Dinakar Muthiah and Alex Weekes.
\newblock Fundamental monopole operators and embeddings of {K}ac-{M}oody affine {G}rassmannian slices.
\newblock {\em International Mathematics Research Notices}, page rnae115, 2024.

\bibitem[MYZ21]{mirkovic2021loop}
Ivan Mirkovi{\'c}, Yaping Yang, and Gufang Zhao.
\newblock Loop {G}rassmannians of quivers and affine quantum groups.
\newblock In {\em Representation Theory and Algebraic Geometry: A Conference Celebrating the Birthdays of Sasha Beilinson and Victor Ginzburg}, pages 347--392. Springer, 2021.

\bibitem[Ras21]{raskin2021chiral}
Sam Raskin.
\newblock Chiral principal series categories i: finite dimensional calculations.
\newblock {\em Advances in Mathematics}, 388:107856, 2021.

\bibitem[Zhu09]{zhu2009affine}
Xinwen Zhu.
\newblock Affine demazure modules and t-fixed point subschemes in the affine grassmannian.
\newblock {\em Advances in Mathematics}, 221(2):570--600, 2009.

\end{thebibliography}

\end{document}